\documentclass[12pt]{amsart}

\usepackage{geometry} 
\usepackage{amsmath,amsthm,amssymb,amscd,tikz,tikz-cd, mathrsfs, mathtools, manfnt}
\usepackage{hyperref}
\usepackage[all, cmtip]{xy}

\newcommand{\ie}{i.e.\ }
\newcommand{\eg}{e.g.\ }
\newcommand{\inv}{^{-1}}

\newcommand{\Z}{\mathbb Z}
\newcommand{\N}{\mathbb N}

\newcommand{\D}{\mathbb D}
\newcommand{\E}{\mathbb E}

\newcommand{\bS}{\mathbb S}

\newcommand{\Dcof}{{\D}_\text{cof}}
\newcommand{\Ecof}{{\E}_\text{cof}}

\newcommand{\ul}{\text{\smash{\raisebox{-1ex}{\scalebox{1.5}{$\ulcorner$}}}}}

\DeclareMathOperator{\dia}{dia}

\newcommand{\Sdot}{\mathbf{S}_\bullet}
\newcommand{\Sn}{\mathbf{S}_n}
\DeclareMathOperator{\Ar}{Ar}

\newcommand{\Ord}{\mathbf{Ord}}

\DeclareMathOperator{\Ho}{Ho}
\DeclareMathOperator{\HO}{HO}

\newcommand{\op}{^\text{op}}
\newcommand{\pr}{\text{pr}}

\newcommand{\swtrans}{\mathbin{\rotatebox[origin=c]{225}{$\Rightarrow$}}}
\newcommand{\netrans}{\mathbin{\rotatebox[origin=c]{45}{$\Rightarrow$}}}

\newcommand{\cW}{\mathcal W}
\newcommand{\cA}{\mathcal A}

\newcommand{\cC}{\mathcal C}
\newcommand{\cD}{\mathcal D}

\newcommand{\cS}{\mathcal S}

\newcommand{\cM}{\mathcal M}

\newcommand{\Cat}{\mathbf{Cat}}
\newcommand{\CAT}{\mathbf{CAT}}

\newcommand{\Der}{\mathbf{Der}}

\DeclareMathOperator{\Hom}{Hom}
\DeclareMathOperator{\Fun}{Fun}
\DeclareMathOperator{\id}{id}

\DeclareMathOperator*{\colim}{colim}
\newcommand{\Dia}{\mathbf{Dia}}
\newcommand{\Dirf}{\mathbf{Dir_f}}

\renewcommand{\phi}{\varphi}

\numberwithin{equation}{section}
\numberwithin{figure}{section}

\theoremstyle{definition}
\newtheorem{theorem}[equation]{Theorem}
\newtheorem{prop}[equation]{Proposition}
\newtheorem{defn}[equation]{Definition}
\newtheorem{lemma}[equation]{Lemma}
\newtheorem{remark}[equation]{Remark}
\newtheorem{cor}[equation]{Corollary}

\newtheorem{notn}[equation]{Notation}

\begin{document}

\title{The K-theory of left pointed derivators}
\author{Ian Coley}
\address{Hill Center for the Mathematical Sciences, Rutgers University}
\urladdr{\href{http://iancoley.org}{iancoley.org}}
\email{\href{mailto:iacoley@math.rutgers.edu}{iacoley@math.rutgers.edu}}
\begin{abstract}
We build on work of Muro-Raptis in \cite{MurRap17} and Cisinski-Neeman in \cite{CisNee08} to prove that the additivity of derivator K-theory holds for a large class of derivators that we call \emph{left pointed derivators}, which includes all triangulated derivators. The proof methodology is an adaptation of the combinatorial methods of Grayson in \cite{Gra11}. As a corollary, we prove that derivator K-theory is an infinite loop space. Finally, we speculate on the role of derivator K-theory as a trace from the algebraic K-theory of a stable $\infty$-category \`a la \cite{BluGepTab13}.
\end{abstract}
\maketitle

\tableofcontents

\section{Introduction}
Algebraic K-theory is a general tool for understanding complicated mathematical objects arising in homotopy theory, algebraic geometry, differential topology, representation theory, and other fields. Since Quillen's seminal work \cite{Qui73}, the field of algebraic K-theory has enjoyed incredible popularity and expansion beyond abelian or exact categories. Waldhausen in \cite{Wal85} set the tone for how K-theory would be constructed for more and more general objects. A common philosophy is that, if we expand the class of objects on which K-theory is defined, we should make sure that our new definitions agree with the old. Waldhausen made sure this was the case when he defined his K-theory of categories with cofibrations and weak equivalences.

A stumbling block, however, was including triangulated categories into Waldhausen's framework. Defined first by Verdier in his doctoral thesis \cite{Ver96} in 1963, triangulated categories are invaluable in the study of homological algebra and homotopy theory. The bounded derived category associated to an exact or abelian category has a natural triangulated structure. The homotopy categories of both ordinary and $G$-equivariant spectra are similarly naturally triangulated. In algebraic geometry, the theory of motives is studied using triangulated categories, as the `abelian category of mixed motives' remains a conjecture.

Neeman in the 1990's published a series of papers on the K-theory of triangulated categories starting with \cite{Nee97a} and \cite{Nee97b}. There are a number of interesting properties of his construction, but we do not mention them here due to a fundamental defect in triangulated category theory. In the years following Neeman's publications, various authors proved that a satisfactory functorial construction on triangulated categories would never be possible, in the following sense. Starting from an exact category, we can take its K-theory via Quillen's or Waldhausen's definition, or pass to its bounded derived category and take its triangulated K-theory. If triangulated K-theory were to extend Quillen's K-theory, these two constructions would give the same K-groups. In other words, if $K_Q$ is Quillen's K-theory and $\bS$ is the category of spaces, can we find a functor $K_\Delta$ making the diagram below commute?
\begin{equation*}
\vcenter{\xymatrix@C=1em{
\textbf{ExCat}\ar[dr]_-{D^b(-)}\ar[rr]^-{K_Q}&&\cS\\
&\textbf{TriCat}\ar[ur]_-{K_\Delta}
}}
\end{equation*}

Schlichting in \cite{Sch02} gives a general argument showing that we should not expect the triangulated category to retain all K-theoretic information. Specifically, he constructs two Waldhausen categories $\cW_1$ and $\cW_2$, arising from abelian categories of modules over a commutative ring, such that their (triangulated) homotopy categories are equivalent but their Waldhausen $K_4$ differs. However, on the triangulated side, each homotopy category $\Ho(\cW_i)$ appears as a Verdier localization of the same category with equivalent localising subcategories. This leads to a contradiction between two desirable properties for K-theory: agreement and localization. The above diagram can only commute if $K_\Delta$ does not satisfy localization.

Schlichting's result points to the need for a richer structure for homotopy theory than triangulated categories alone. This is not a new idea; from our first homological algebra class, we learn that the cone construction in a triangulated category is non-functorial. The slogan `unique up to unique isomorphism', central to how we approach category theory, abandons us. There are a few different ways to give ourselves more data to work with.

In some senses, the best replacement for triangulated categories, especially through the lens of algebraic K-theory, are \emph{stable $\infty$-categories}, \ie the triangulated analogue of higher categories. Recent work of Blumberg-Gepner-Tabuada in \cite{BluGepTab13} proves that algebraic K-theory is the universal additive invariant of a stable (small) $\infty$-category. This extends the origins of K-theory precisely; the Grothendieck group of a commutative monoid is the universal abelian group such that any additive invariant on the monoid must factor through it. However, there are reasons to mistrust $\infty$-categories: the literature is daunting and there are competing (though equivalent) models. Though the theory of $\infty$-categories is ideal for universal constructions, it is often difficult to know concretely what has been constructed. In the example of algebraic K-theory, there is an $\infty$-category of `non-commutative motives' in which algebraic K-theory is found -- but the rest of the category is quite mysterious (for now -- there is much active work on this topic).

There are lower-categorical tools that work well and do not have these drawbacks. An early tool in studying triangulated categories, developed by Quillen in \cite{Qui67} before his work on algebraic K-theory, is that of \emph{model categories}. A model category is the data of a category we wish to treat homotopically and extra information allowing us to pass from the rigid structure to the homotopy category. A solution to the non-functoriality of the cone can be solved in such a framework. For nice enough model categories $\cM$, the category of arrows $\Ar\cM$ inherits a compatible model category structure. We can define the cone of a morphism before passing to the homotopy categories, \ie $\Ho(\Ar\cM)\to\Ho(\cM)$ rather than $\Ar\Ho(\cM)\to\Ho(\cM)$. If we knew only the category $\Ho(\cM)$, this first approach will not be possible, so in this sense we have given ourselves more to work with.

Let us interpret this in the triangulated setting. Let $\cA$ be an abelian category. Then the arrow category $\Ar\cA$ is still abelian, so we can take its bounded derived category $D^b(\Ar\cA)$, which we can think of as homotopy classes of maps of chain complexes. Then we can define the cone construction as an exact functor of abelian categories $C^b(\Ar\cA)\to C^b(\cA)$ \emph{before} we invert quasi-isomorphisms. We still have a functor upon passing to the derived category, and so have a functorial cone construction with a new domain. However, there is a forgetful functor $D^b(\Ar\cA)\to\Ar D^b(\cA)$ which takes a homotopy class of a map to a map of homotopy classes. While $D^b(\Ar\cA)$ is triangulated, $\Ar D^b(\cA)$ is not, but this functor can be shown to be full and essentially surjective (and almost never faithful). We have constructed a cone \emph{functor} because we had access to $\cA$ itself and not just the triangulated category $D^b(\cA)$ and thus were able to build an auxiliary diagram category $\Ar\cA$ to fill in the gaps in information.

This is our slogan: we would like to study not only a triangulated category, but a whole system of triangulated diagram categories. An equivalence of homotopy categories as in \cite{Sch02} does not necessarily give rise to an equivalence of systems, and thus we are able to better distinguish distinct homotopy theories. Grothendieck in \cite{Gro90} coined the term \emph{derivator} for a system of derived categories, and this is the framework in which we will work to address questions about the K-theory of triangulated categories.

The theory of derivators was developed initially (under different names) by Heller in \cite{Hel88},  Grothendieck in \cite{Gro90}, and (in the triangulated setting) Franke in \cite{Fra96}. In brief, a derivator represents an abstract bicomplete homotopy theory; we attach the adjective \emph{triangulated} to a derivator when it represents a stable (bicomplete) homotopy theory. The fundamental proof techniques used in the theory of derivators and the diagrammatic flavor which is unique to this field have been well-articulated by Moritz Rahn (n\'e Groth) in \cite{Gro13}.

The K-theory of triangulated derivators was defined by Maltsiniotis in \cite{Mal07} and Garkusha in \cite{Gar06} and \cite{Gar05}, and revisited by Muro and Raptis in \cite{MurRap17}. Muro-Raptis proved that the definition of K-theory still makes sense for derivators which are not triangulated, and form a class which we call \emph{left pointed derivators}. We develop in \cite{Col20b} a more robust theory of such \emph{half derivators}, \ie ones representing homotopy theories that may not be bicomplete, but still admit many limits or colimits, in order to answer questions about K-theory in the broadest generality. This is one advantage over the approach of \cite{BluGepTab13}: we are not restricted to stable phenomena.

Cisinki and Neeman proved that the K-theory of triangulated derivators satisfies a form of additivity in \cite{CisNee08}, but their proof involves Neeman's theory of regions and does not admit an obvious analogy in the non-triangulated situation. We prove the following broader theorem.\ \\

\noindent\textbf{Main Result. }(Theorems~\ref{thm:additivities} and \ref{thm:mainadditivity})

\noindent Let $\D$ and $\E$ be left pointed derivators. Then the following are equivalent and true:
\begin{enumerate}
\item The map
\begin{equation*}
\xymatrix@C=5em{
\Dcof\ar[r]^-{(0,0)^\ast\times(1,1)^\ast} &\D\times\D
}
\end{equation*}
induces a homotopy equivalence on derivator K-theory, where $\Dcof$ is the left pointed derivator of cofiber sequences in $\D$.

\item If $\Xi\colon\D\to\Ecof$ is a cofibration morphism of derivators, then there exists a homotopy between the target of the cofibration morphism and the source plus quotient. Specifically,
\begin{equation*}
K(T)\simeq K(S)\sqcup K(Q)\;(\cong K(S\sqcup Q))
\end{equation*}
where $\sqcup$ denotes the coproduct.
\end{enumerate}

The first statement is the form of additivity proven by Cisinski-Neeman and conjectured by Maltsiniotis. The second statement is the one proven in Theorem~\ref{thm:mainadditivity} and uses techniques of Grayson in \cite{Gra11} that have a more diagrammatic flavor appropriate for general derivator theory. We obtain as a corollary the delooping of the K-theory space $K(\D)$, and so conclude that $K(\D)$ is an infinite loop space for a general left pointed derivator~$\D$, which was not known in any cases before. This answers two questions of Muro-Raptis posed in \cite{MurRap17}.

We first recall the necessary results from \cite{Col20b} that establish the domain of derivator K-theory. We then give the construction of derivator K-theory and present previously-known results. We conclude with the main new additivity theorem of derivator K-theory and the important consequences thereof.

\section{Preliminaries}\label{sec:preliminaries}

Recall that a \emph{prederivator} is just a strict 2-functor $\D\colon\Cat\op\to\CAT$, where the domain is the 2-category of small categories and the codomain the `2-category' of not-necessarily-small categories. For a morphism $u\colon J\to K$ in $\Cat$ we denote by~$u^\ast$ the functor $\D(u)\colon \D(K)\to~\D(J)$ in $\CAT$, and for $\alpha\colon u\Rightarrow v$ in $\Cat$ we denote by~$\alpha^\ast$ the natural transformation $\D(\alpha)\colon u^\ast\Rightarrow v^\ast$. Composition is respected strictly, so that $(v u)^\ast = u^\ast v^\ast$ and $(\alpha\odot\beta)^\ast=\alpha^\ast\odot\beta^\ast$ (here $\odot$ is the pasting of natural transformations). Identities are also preserved, so that $(\id_J)^\ast=\id_{\D(J)}$\linebreak and $(\id_u)^\ast=\id_{u^\ast}$.

A \emph{derivator} is a prederivator that models a system of diagram categories which is homotopically bicomplete. We give the definition in two parts.

Let $\D$ be a prederivator, $K$ a small category, and $k\in K$ be any object. Recall that we have a functor that classifies the object $k$ which we denote $k\colon e\to K$, where $e$ is the category with one object and one (identity) morphism. Then for any~$X\in \D(K)$, we have an object $k^\ast X\in \D(e)$. Suppose that $f\colon k_1\to k_2$ is a map in $K$. Then we have a corresponding natural transformation $f^\ast\colon k_1^\ast \Rightarrow k_2^\ast$ and thus a map $f^\ast X\colon k_1^\ast X\to k_2^\ast X$ in $\D(e)$. Repeating this process for all objects and maps in $K$, we obtain a functor
\begin{equation*}
\dia_K\colon \D(K)\to \Fun(K,\D(e))
\end{equation*}
which sends $X\in\D(K)$ to the functor which assembles all the above data. We call this an \emph{underlying diagram functor}, and its existence implies that the prederivator $\D$ should be modelling $K$-shaped diagrams in $\D(e)$, which we call the \emph{underlying category} or the \emph{base} of the prederivator. We will refer to the categories $\D(K)$ as \emph{coherent} diagrams, as opposed to the \emph{incoherent} diagrams $\Fun(K,\D(e))$.

\begin{defn}
A \emph{semiderivator} is a prederivator $\D$ satisfying the following two axioms:
\begin{enumerate}
\item[(Der1)] Coproducts are sent to products. Explicitly, consider any set $\{K_a\}_{a\in A}$ of small categories, and let $i_b\colon K_b\to \displaystyle\coprod_{a\in A} K_a$ be the inclusion for any $b\in A$. Pulling back along this inclusion gives a functor
\begin{equation*}
i_b^\ast\colon\D\left(\coprod_{a\in A}K_a\right)\to\D(K_b)
\end{equation*}
which induces a map to the product
\begin{equation*}
\prod_{b\in A} i_b^\ast\colon\D\left(\coprod_{a\in A}K_a\right)\to \prod_{b\in A}\D(K_b)
\end{equation*}
We require this map to be an equivalence of categories for any\linebreak collection $\{K_a\}_{a\in A}$.

\item[(Der2)] Isomorphisms are detected pointwise. That is, for any $K\in \Cat$, the underlying diagram functor $\dia_K$ is conservative. More specifically, a map $f\colon X\to Y$ is an isomorphism in $\D(K)$ if and only if the map $k^\ast f\colon k^\ast X\to k^\ast Y$ is an isomorphism for all $k\in K$.
\end{enumerate}
\end{defn}

These two axioms comprise the `system of diagram categories' part of the definition. For the next two axioms, we need the following notation.

\begin{defn}
Let $u\colon J\to K$ be any functor, and let $k\in K$ be any object. We define the \emph{comma category} $(u/k)$ as follows: its objects are pairs $j\in J$ with a map~$f\colon u(j)\to k$, and a map $(j,f)\to (j',f')$ in the comma category is a\linebreak map $g\colon j\to j'$ in $J$ making the obvious diagram commute:
\begin{equation}\label{dia:commamorphism}
\vcenter{\xymatrix@C=1em{
u(j)\ar[rr]^-{u(g)}\ar[dr]_(0.4){f}&&u(j')\ar[dl]^(0.4){f'}\\
&k
}}
\end{equation}
\end{defn}
For any category $K\in\Cat$, we write $\pi_K$ for the unique functor $K\to e$.

\begin{defn}\label{defn:leftderivator}
A semiderivator $\D$ is a \emph{left derivator} if it satisfies the following two axioms:
\begin{enumerate}
\item[(Der3L)] The base of the semiderivator $\D(e)$ is (homotopically) cocomplete. Specifically, for every functor $u\colon J\to K$, the pullback $u^\ast$ admits a left adjoint, which we denote $u_!\colon\D(J)\to\D(K)$ and call the \emph{(homotopy) left Kan extension along $u$}. As a special case, this includes $\pi_K\colon K\to e$ and thus $\D(e)$ admits all (coherent) colimits.

\item[(Der4L)] Left Kan extensions can be computed pointwise. Let $u\colon J\to K$ and $k\in~K$. Then we have the following lax pullback square in $\Cat$:
\begin{equation*}
\vcenter{\xymatrix{
(u/k)\ar[r]^-{\pr}\ar[d]_-{\pi}&J\ar[d]^-u\ar@{}[dl]|\swtrans\ar@{}[dl]<-1.25ex>|\alpha\\
e\ar[r]_k&K
}}
\end{equation*}
where we let $\pi=\pi_{(u/k)}$ for brevity. Applying the semiderivator $\D$ to this square, we obtain the following square in $\CAT$, remembering that functors are reversed and natural transformations are not:
\begin{equation*}
\xymatrix{
\D((u/k))&\D(J)\ar@{}[dl]|{\swtrans}\ar@{}[dl]<-1.25ex>|{\alpha^\ast}\ar[l]_-{\pr^\ast}\\
\D(e)\ar[u]^-{\pi^\ast}&\D(K)\ar[l]^-{k^\ast}\ar[u]_-{u^\ast}
}
\end{equation*}

By Der3L, both vertical functors admit left adjoints, so we may construct the left mate of $\alpha^\ast$ as the pasting of the below diagram, which we denote by~$\alpha_!$ (rather than the `official' notation $(\alpha^\ast)_!$):
\begin{equation*}
\xymatrix{
\D(e)&\ar[l]_-{\pi_!}\D((u/k))\ar@{}[dl]|{\swtrans}&\D(J)\ar@{}[dl]|{\swtrans}\ar@{}[dl]<-1.25ex>|{\alpha^\ast}\ar[l]_-{\pr^\ast}&\ar@{}[dl]|{\swtrans}\\
&\D(e)\ar@(l,d)[ul]^-=\ar[u]^-{\pi^\ast}&\D(K)\ar[l]^-{k^\ast}\ar[u]_-{u^\ast}&\D(J)\ar[l]^-{u_!}\ar@(u,r)[ul]_-=
}
\end{equation*}
In total we have the natural transformation $\alpha_!\colon \pi_!\pr^\ast\Rightarrow k^\ast u_!$. We require this map to be a natural isomorphism. 
\end{enumerate}
\end{defn}

A semiderivator $\D$ is a \emph{right derivator} if it satisfies the analogous axioms Der3R and Der4R, which together say that every functor $u^\ast$ admits a right adjoint $u_\ast$ satisfying a pointwise computation formula. A \emph{derivator} is just a left and right derivator.

There is a relative construction that we need to introduce at this point. Suppose that $\D$ is a prederivator and $I\in \Cat$ is a category. Then we can define another prederivator $\D^I$ by $\D^I(K)=\D(I\times K)$; for $u\colon J\to K$, $\D^I(u)=\D(\id_I\times u)$; and similar for natural transformations. If $\D$ is a (left/right/full) derivator, so is $\D^I$. This is often called a \emph{shifted derivator}.

There is a `fifth axiom' for derivators that is not needed in all contexts, but is needed for ours.
\begin{defn}
A prederivator is \emph{strong} if for any finite free category $I$ and for any category $K\in\Cat$, the partial underlying diagram functor
\begin{equation*}
\dia_{K,I}\colon\D(K\times I)\to\Fun(K,\D(I))
\end{equation*}
is full and essentially surjective. In some derivator literature this axiom is called Der5.
\end{defn}
The functor $\dia_{K,I}$ is related to the underlying diagram functor defined above, except in this case we leave the $I$-dimension of all coherent diagrams intact. The content of this axiom is that any incoherent diagram of a simple shape is liftable to a coherent one; further, any map of these incoherent diagrams lifts to a map between the coherent ones. This axiom is asking for the same sort of thing as lifting a map between objects in some homotopy category to a map of bifibrant replacements. 

\begin{remark}
The strongness axiom usually only asks for the case $I=[1]$ (and sometimes all finite ordinals $[n]$, see Notation~\ref{notn:ordinals}), but all known examples either satisfy this `strong strongness' version of the axiom or fail for $I=[1]$. Derivators failing the case $I=[1]$ are constructed in \cite{Lag17}, so any version of strongness is a non-extraneous axiom, but all derivators arising from some sort of model satisfy Der5 as above (see Lemma~\ref{lemma:waldhausenderivator} below).
\end{remark}

We have one more adjective to attach to our derivators.

\begin{defn}
A derivator $\D$ is \emph{pointed} if its underlying category $\D(e)$ is pointed, \ie the unique morphism from the initial to the final object is an isomorphism. We will write $0\in\D(e)$ for its zero object.
\end{defn}
This definition is easy to check but does not tell the whole story. One immediate consequence is that each category $\D(J)$ has a zero object, given by $0_J:=\pi_J^\ast(0)$ where~$\pi_J\colon J\to e$ is the projection. The more interesting corollary requires some definitions first:

\begin{defn}
Let $u\colon J\to K$ be a fully faithful functor that is injective on objects.
\begin{enumerate}
\item The functor $u$ is a \emph{sieve} if for any morphism $k\to u(j)$ in $K$, $k$ lies in the image of $u$.
\item The functor $u$ is a \emph{cosieve} if for any morphism $u(j)\to k$ in $K$, $k$ lies in the image of $u$.
\end{enumerate}
\end{defn}

\begin{prop}[Proposition~1.23,~\cite{Gro13}]\label{prop:extensionbyzero}
Let $\D$ be a pointed derivator, and let~$u\colon J\to K$ be a sieve (resp. cosieve). Then $u_\ast\colon\D(J)\to\D(K)$ (resp. $u_!$) is fully faithful, with essential image $X\in \D(K)$ such that $k^\ast X\cong 0$ for all $k\in K\setminus u(J)$.
\end{prop}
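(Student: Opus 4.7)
The plan is to prove the sieve case for $u_\ast$; the cosieve case for $u_!$ follows by a dual argument that swaps left and right Kan extensions, the comma categories $(k/u)$ and $(u/k)$, and initial with terminal objects. The strategy is entirely pointwise, using the right-Kan-extension version of the pointwise formula (the dual of Der4L, guaranteed by Der3R and Der4R) together with conservativity from Der2.

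Fix $X \in \D(J)$ and, for each $k \in K$, write
\[
k^\ast u_\ast X \;\cong\; \pi_{(k/u)\,\ast}\,\pr^\ast X
\]
where $(k/u)$ has objects $(j, f\colon k \to u(j))$. I split into two cases. If $k = u(j_0)$ for some $j_0 \in J$, full faithfulness of $u$ identifies $(k/u)$ with the coslice category of pairs $(j, g\colon j_0 \to j)$ in $J$, whose initial object is $(j_0, \id_{j_0})$. An initial object $c$ of a small category $C$ yields an adjunction $c \dashv \pi_C$ in $\Cat$; applying $\D$ gives an equivalence of right adjoints $c^\ast \simeq \pi_{C\,\ast}$, so the limit collapses to evaluation at the initial object, yielding $k^\ast u_\ast X \cong j_0^\ast X$. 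A diagram chase through the mate construction shows that this isomorphism agrees with the counit $\epsilon_X$ of $u^\ast \dashv u_\ast$ evaluated at $j_0$. If instead $k \notin u(J)$, the sieve condition forbids all morphisms $k \to u(j)$, so $(k/u)$ is the empty category; by Der1 applied to the empty coproduct $\D(\emptyset)$ is terminal, and $\pi_{\emptyset\,\ast}$ lands in the terminal object of $\D(e)$, which equals $0$ by pointedness. Hence $k^\ast u_\ast X \cong 0$.

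Both conclusions now follow from two applications of Der2. From Case 1, the counit $\epsilon\colon u^\ast u_\ast \Rightarrow \id_{\D(J)}$ is a pointwise isomorphism, hence a global isomorphism on $\D(J)$, so $u_\ast$ is fully faithful. One containment of the essential image is immediate from Case 2: any $u_\ast Y$ vanishes at $k \notin u(J)$. For the reverse, suppose $X \in \D(K)$ satisfies $k^\ast X \cong 0$ for every $k \notin u(J)$, and check the unit $\eta_X\colon X \to u_\ast u^\ast X$ pointwise: at $k = u(j_0)$, Case 1 together with a triangle identity makes $k^\ast \eta_X$ an isomorphism, while at $k \notin u(J)$ both sides are zero and $k^\ast \eta_X\colon 0 \to 0$ is tautologically an isomorphism. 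By Der2 on $\D(K)$, $\eta_X$ is globally an isomorphism, so $X$ lies in the essential image of $u_\ast$.

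The main subtlety I expect is the bookkeeping in Case 1: one must verify that the isomorphism $k^\ast u_\ast X \cong j_0^\ast X$ produced via the initial object is literally the counit at $j_0$, not merely some iso between the same two objects. This requires tracing the mate construction of Der4L (dualized) against the adjunction $c \dashv \pi_C$, but once this compatibility is established the remainder of the argument reduces to routine conservativity.
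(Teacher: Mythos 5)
The paper does not supply a proof here: the statement is cited directly from Groth's paper \cite[Prop.~1.23]{Gro13}, so there is no in-text argument to compare against. Your proof is correct and is essentially the standard argument given by Groth. The pointwise strategy — use Der4R to reduce $k^\ast u_\ast X$ to a limit over $(k/u)$, then observe that $(k/u)$ has initial object $(j_0,\id)$ when $k=u(j_0)$ (so the limit is evaluation there) and is empty when $k\notin u(J)$ (so the limit is the terminal, hence zero, object by Der1 and pointedness), then finish with Der2 — is exactly the right decomposition. You also correctly anticipate the one nontrivial bookkeeping step: one must verify that the isomorphism $k^\ast u_\ast X\cong j_0^\ast X$ produced via the initial object and the mate $\alpha_\ast$ of Der4R genuinely computes $j_0^\ast\epsilon_X$, so that the pointwise-iso conclusion applies to the counit itself rather than to some abstract comparison map. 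This compatibility is a standard mate-calculus check and is carried out in \cite{Gro13}; flagging it rather than glossing over it is the right instinct. The dual cosieve/$u_!$ case does go through mechanically by reversing arrows, swapping $(k/u)$ for $(u/k)$, and replacing initial with terminal objects, as you say.
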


These adjoints are called \emph{extension by zero morphisms} and are essential in the proofs of derivator K-theory below.

\begin{prop}[Corollaries~3.5~and~3.8, \cite{Gro13}]\label{prop:stronglypointed}
A derivator is pointed if and only if extension by zero morphisms admit exceptional adjoints. Specifically, every right extension by zero $u_\ast$ along a sieve admits a right adjoint $u^!$ and every left extension by zero $u_!$ along a cosieve admits a left adjoint $u^?$.
\end{prop}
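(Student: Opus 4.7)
The equivalence splits into two directions, which I would handle in turn.

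\emph{Exceptional adjoints imply pointed.} I would apply the hypothesis to the unique functor $\iota\colon\emptyset\to e$, which is vacuously both a sieve and a cosieve. By Der1 for the empty coproduct, $\D(\emptyset)$ is equivalent to the terminal category. The ordinary adjunction $\iota^\ast\dashv\iota_\ast$ identifies $\iota_\ast$ applied to the unique object with the terminal object $\top\in\D(e)$. Granting the hypothesized exceptional right adjoint $\iota_\ast\dashv\iota^!$, for every $X\in\D(e)$ the set $\Hom_{\D(e)}(\top,X)$ is identified with a $\Hom$-set in the terminal category $\D(\emptyset)$, which is always a singleton. Hence $\top$ is also initial and thus a zero object, so $\D$ is pointed.

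\emph{Pointed implies exceptional adjoints.} Assuming $\D$ is pointed, I would construct $u^!$ right adjoint to $u_\ast$ for every sieve $u\colon J\to K$; the dual case $u^?\dashv u_!$ for cosieves follows by applying the same argument to the $2$-opposite prederivator $\D\op$ defined by $\D\op(K):=\D(K\op)\op$, which swaps sieves with cosieves and $u_\ast$ with $u_!$. The guiding example is the minimal sieve $u\colon\{0\}\hookrightarrow[1]$: pointedness forces $u_\ast Y=(Y\to 0)$, and the universal property of the adjoint then constrains $u^!(X_0\to X_1)$ to be the homotopy fiber of $X_0\to X_1$, realized as the pullback $X_0\times_{X_1}0$ in $\D(e)$ using the zero object and Der3R. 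Already in this case, one sees that the fiber is computed by a two-step Kan extension on an enlarged shape: first extend $(X_0\to X_1)\in\D([1])$ by zero along the cosieve $\{(1,0)\to(1,1)\}\hookrightarrow\ulcorner$ with $\ulcorner=[1]^2\setminus\{(0,0)\}$ (possible by Proposition~\ref{prop:extensionbyzero}), then right Kan extend along $\ulcorner\hookrightarrow[1]^2$, and finally restrict to the corner $(0,0)$.

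For general $u$ with complementary cosieve $v\colon L\to K$, I would run an analogous two-step procedure on an enlarged shape $\widehat{K}$ built from $K$ by adjoining, for each $j\in J$ that admits a morphism into $L$, an appropriate pair of ``zero slot'' and ``fiber slot'' objects arranged so that Der4R applied to the inclusion $K\hookrightarrow\widehat{K}$ computes the relative fiber of $X_{u(j)}$ against the $L$-data of $X$ at each $j$ simultaneously. Restricting the resulting coherent diagram along the sieve $J\hookrightarrow\widehat{K}$ defines $u^! X\in\D(J)$, and the adjunction $u_\ast\dashv u^!$ is verified by combining the universal properties of the two Kan extensions with the extension-by-zero description of Proposition~\ref{prop:extensionbyzero}, using strongness (Der5) to produce enough morphism lifts to test the adjunction against arbitrary coherent $Z\in\D(J)$.

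The main obstacle is organizational: building $\widehat{K}$ so that Der4R produces exactly the required iterated fibers, and verifying that the resulting assignment $X\mapsto u^! X$ is simultaneously natural in $X$ and genuinely right adjoint to $u_\ast$ at the level of coherent diagrams. Pointedness is indispensable throughout, since the entire mechanism relies on terminating the auxiliary slots with the zero object of $\D$; in a non-pointed derivator there is no uniform way to force the $L$-data of $X$ to cancel and expose the desired fibers on the $J$-part, and the construction of $u^!$ collapses.
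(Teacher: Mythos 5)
Note first that the paper does not prove this proposition; it is quoted from Groth's \cite{Gro13} (Corollaries~3.5 and 3.8), so there is no in-paper proof to compare against, and I evaluate your argument on its own terms.

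Your first direction (exceptional adjoints imply pointed) is correct: the exceptional right adjoint along the empty sieve $\emptyset\to e$ forces the terminal object of $\D(e)$ to also be initial, hence a zero object. The reduction of the cosieve case to the sieve case via the $2$-opposite derivator is also a standard, legitimate move.

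The second direction has a genuine gap. The base case $\{0\}\hookrightarrow[1]$ is fine: you correctly realize the homotopy fiber via a left extension by zero along a cosieve into the cospan shape, followed by a right Kan extension to the square and restriction to the corner, and the adjunction there is easy to check. But the general case never materializes. The ``enlarged shape $\widehat{K}$'' is not actually defined, and the phrase ``compute the relative fiber of $X_{u(j)}$ against the $L$-data of $X$ at each $j$ simultaneously'' does not yet describe a category or a morphism of derivators. The essential difficulty your sketch elides is that the candidate $u^!X$ must be a single coherent object of $\D(J)$ whose components assemble compatibly over all morphisms of $J$ and which is genuinely right adjoint to $u_\ast$ at the level of the full derivator: the adjunction is a global condition, not a pointwise one at each $j$. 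Constructing $\widehat{K}$ so that an iterated Kan extension produces exactly this coherent object, and then verifying the triangle identities, is where essentially all of the work lies; that is precisely the content of Groth's proof and you have not replaced it.

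Separately, your appeal to strongness (Der5) to ``produce enough morphism lifts to test the adjunction'' should set off an alarm. The proposition is stated for a plain (not necessarily strong) derivator, and Groth's proof makes no use of Der5. Adjunctions between morphisms of derivators are checked via universal properties of the Kan extensions inside the derivator; if your argument requires passing to incoherent diagrams and lifting, it has drifted away from a proof that establishes the statement as given.
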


The existence of exceptional adjoints is crucial for functoriality properties on derivator K-theory, which we now address.

\begin{defn}
Let $\D,\E\colon\Cat\op\to\CAT$ be prederivators. A \emph{morphism of prederivators} $\Phi\colon\D\to\E$ is a pseudonatural transformation of the associated 2-functors. This consists of the following data: for each $K\in\Cat$ we have a\linebreak functor $\Phi_K\colon\D(K)\to~\E(K)$ and for every $u\colon J\to K$ we have a natural isomorphism $\gamma_u^\Phi\colon u^\ast \Phi_K\Rightarrow \Phi_J u^\ast$
\begin{equation*}
\xymatrix{
\D(K)\ar[r]^{\Phi_K}\ar[d]_-{u^\ast}&\E(K)\ar[d]^-{u^\ast}\ar@{}[dl]|{\swtrans}\ar@{}@<-1.75ex>[dl]|{\gamma_u^\Phi}\\
\D(J)\ar[r]_{\Phi_J}&\E(J)
}
\end{equation*}
where we have slightly abused notation by writing $u^\ast$ for both $\D(u)$ and $\E(u)$. These are subject to certain coherence conditions which we leave to \cite{Col20b}, \cite{Gro13}, or \cite[\S7]{Bor94a}.
\end{defn}

When defining morphisms of derivators, instead of writing $\Phi_K(X)$ for $X\in \D(K)$ for all $K\in\Cat$, we will usually write $\Phi X$ for $X\in \D$. Our constructions will be not heavily dependent on specific $K$.

A morphism of derivators is just a morphism of the underlying prederivators. We say that a morphism of (pre)derivators is an \emph{equivalence} if each functor $\Phi_K$ is an equivalence of categories. There is a subclass of morphisms that need singling out.

\begin{defn}
A morphism of (pre)derivators $\Phi\colon\D\to\E$ is called \emph{strict} if for every $u\colon J\to K$, the structure isomorphism $\gamma_u^\Phi\colon u^\ast \Phi_K\Rightarrow\Phi_Ju^\ast$ is the identity.
\end{defn}

In 2-categorical language, a strict morphism $\Phi$ is a \emph{strict} natural transformation of 2-functors, not just \emph{pseudo}natural. A morphism being strict seems fairly unlikely, as it implies a great deal of rigidity in what is a fairly flexible homotopical context. Nonetheless, the model of derivator K-theory we use in this paper will require strict morphisms, and we will be able to obtain strict morphisms (up to equivalence) whenever we need.

The main class of morphisms of derivators that we study involve shifted derivators. Suppose that $u\colon J\to K$ is a functor and $\D$ is a prederivator. Then we obtain a morphism of prederivators $u^\ast\colon\D^K\to\D^J$ which is actually strict, as the coherence data $\gamma^{u^\ast}$ arise from the strict 2-functoriality of $\D\colon\Cat\op\to\CAT$. Moreover, if $\D$ is a left or right derivator, we obtain morphisms $u_!,u_\ast\colon \D^J\to \D^K$, but these are \emph{not} strict. This is related to the fact that (co)limits are essentially unique, which allows for the construction of the structure isomorphisms, but not actually unique. These morphisms enjoy other properties which we will describe now.

\begin{defn}
Let $\D,\E$ be left derivators and $u\colon J\to K$ in $\Cat$. We say that a morphism $\Phi\colon\D\to\E$ \emph{preserves left Kan extensions along $u$} if the left mate of $(\gamma^\Phi_u)\inv$ is a natural isomorphism. Specifically, we have the pasting
\begin{equation*}
\xymatrix{
\D(J)\ar@(d,l)[dr]_-=\ar[r]^-{u_!}&\D(K)\ar@{}[dl]|{\netrans}\ar[r]^{\Phi_K}\ar[d]_-{u^\ast}&\E(K)\ar[d]^-{u^\ast}\ar@{}[dl]|{\netrans}\ar@{}@<-1.75ex>[dl]|{(\gamma_u^\Phi)\inv}\ar@(r,u)[dr]^-=&\ar@{}[dl]|{\netrans}\\
{}&\D(J)\ar[r]_{\Phi_J}&\E(J)\ar[r]_{u_!}&\E(K)
}
\end{equation*}
giving us a natural transformation $(\gamma_u^\Phi)\inv_!\colon u_!\Phi_J\Rightarrow\Phi_K u_!$ which we demand is an isomorphism, where again we slightly abuse notation by writing $u_!$ for the left adjoint to both $\D(u)$ and $\E(u)$. If the morphism $\Phi$ preserves left Kan extensions along\linebreak all $u\colon J\to K$ in $\Cat$, we say that $\Phi$ is \emph{cocontinuous}.

There is an analogous notion of \emph{continuous morphism} that we will not spell out (as we will not need it below).
\end{defn}

Cocontinuous morphisms of derivators can appear in the same way as colimit-preserving functors in category theory: via adjunctions.

\begin{defn}
Given two morphisms of (pre)derivators $\Phi,\Psi\colon\D\to \E$, a natural transformation $\rho\colon\Phi\Rightarrow\Psi$ is given by a \emph{modification} of pseudonatural transformations. This is the data of a natural transformation $\rho_K\colon \Phi_K\Rightarrow\Psi_K$ for every $K\in\Cat$ satisfying coherence conditions that we do not record here.
\end{defn}

\begin{defn}
Let $\Phi\colon\D\to\E$ and $\Psi\colon \E\to\D$ be two morphisms of (pre)derivators. We say that \emph{$\Phi$ is left adjoint to $\Psi$} (equivalently, \emph{$\Psi$ is right adjoint to $\Phi$}) if there exist two modifications $\eta\colon \id_\D\Rightarrow\Psi\Phi$ and $\varepsilon\colon \Phi\Psi\Rightarrow\id_\E$ satisfying the usual triangle identities.
\end{defn}

In particular, an adjunction of morphisms of derivators $(\Phi,\Psi)$ gives rise to an adjunction of functors $(\Phi_K,\Psi_K)$ for each $K\in\Cat$. However, this condition is not sufficient. A morphism of derivators $\Phi\colon\D\to\E$ may admit a right adjoint\linebreak to $\Phi_K\colon\D(K)\to\E(K)$ for all $K\in\Cat$, but part of the data of a right adjoint morphism of derivators is the structure isomorphisms, which we have no way of recovering in this general situation.

\begin{lemma}[Proposition~2.9, \cite{Gro13}]
Let $\Phi\colon\D\to\E$ be a morphism of left derivators such that each $\Phi_K$ admits a right adjoint $\Psi_K$. Then the collection of functors $\{\Psi_K\}$ assembles to a morphism of derivators $\Psi\colon\E\to\D$ which is right adjoint to $\Phi$ if and only if $\Phi$ is cocontinuous.
\end{lemma}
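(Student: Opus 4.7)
The argument rests on the Beck--Chevalley (``double mate'') correspondence for squares of adjunctions. For each $u\colon J\to K$ in $\Cat$, the structure isomorphism $\gamma^\Phi_u\colon u^\ast\Phi_K\Rightarrow\Phi_J u^\ast$ fills a square in which all four arrows admit adjoints: $u_!\dashv u^\ast$ on both sides (since $\D$ and $\E$ are left derivators), and $\Phi_K\dashv\Psi_K$, $\Phi_J\dashv\Psi_J$ by hypothesis. We can form two mates of $\gamma^\Phi_u$. The \emph{vertical} mate $(\gamma^\Phi_u)\inv_!\colon u_!\Phi_J\Rightarrow\Phi_K u_!$, formed using $u_!\dashv u^\ast$, is by definition invertible iff $\Phi$ is cocontinuous along $u$. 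The \emph{horizontal} mate $\gamma^\Psi_u\colon u^\ast\Psi_K\Rightarrow\Psi_J u^\ast$ is defined as the pasting
\begin{equation*}
u^\ast\Psi_K \xRightarrow{\eta_J u^\ast\Psi_K} \Psi_J\Phi_J u^\ast\Psi_K \xRightarrow{\Psi_J(\gamma^\Phi_u)\inv\Psi_K} \Psi_J u^\ast\Phi_K\Psi_K \xRightarrow{\Psi_J u^\ast\varepsilon_K} \Psi_J u^\ast,
\end{equation*}
using the unit of $\Phi_J\dashv\Psi_J$ and the counit of $\Phi_K\dashv\Psi_K$. A direct diagram chase with the triangle identities yields the classical double mate fact: the vertical and horizontal mates are invertible simultaneously. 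The forward direction is then immediate: if $\Psi$ is a morphism of derivators right adjoint to $\Phi$, then the modification axioms for the unit and counit force its given structure isomorphism to coincide with the horizontal mate displayed above, so $\gamma^\Psi_u$ being invertible implies $(\gamma^\Phi_u)\inv_!$ is invertible, i.e.\ $\Phi$ is cocontinuous.

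For the converse, assume $\Phi$ is cocontinuous. Take the pointwise right adjoints $\Psi_K$ as given and \emph{define} the structure maps $\gamma^\Psi_u$ by the pasting above; by the double mate equivalence these are isomorphisms. One then verifies (i) that $(\Psi_K,\gamma^\Psi_u)$ satisfies the coherence axioms of a pseudonatural transformation (compatibility with composition $(vu)^\ast=u^\ast v^\ast$, with identities, and with 2-cells $\alpha\colon u\Rightarrow v$); (ii) that the pointwise units $\eta_K$ and counits $\varepsilon_K$ assemble into modifications $\eta\colon\id_\D\Rightarrow\Psi\Phi$ and $\varepsilon\colon\Phi\Psi\Rightarrow\id_\E$; and (iii) the triangle identities as modifications, which is automatic since they already hold pointwise and modifications are equal iff they are equal componentwise.

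The main obstacle is (i), particularly the compatibility of $\gamma^\Psi_{vu}$ with the pasting of $\gamma^\Psi_v$ and $\gamma^\Psi_u$; together with the intertwining condition in (ii), this reduces to the 2-functoriality of the horizontal mate construction with respect to horizontal pasting of adjunction squares. Both are routine but somewhat lengthy diagram chases, built on the triangle identities of the $\Phi_K\dashv\Psi_K$ adjunctions and the coherence already satisfied by $\gamma^\Phi$. Conceptually they are instances of the well-known equivalence of double categories that exchanges left- and right-adjoint horizontal arrows, so no new ideas are required beyond careful 2-categorical bookkeeping.
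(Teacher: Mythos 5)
The paper itself does not prove this lemma; it simply cites it as Proposition~2.9 of \cite{Gro13} and adds one heuristic sentence (cocontinuity ``allows us to construct the coherence isomorphisms $\gamma^\Psi_u$''). So there is no in-paper proof to compare against, only the reference. Your mate-calculus argument is correct and is in fact the standard route (and the one Groth uses). The outline is sound: the unit and counit being modifications forces $\gamma^\Psi_u$ to be the horizontal mate of $(\gamma^\Phi_u)\inv$, the ``double mate'' equivalence transports invertibility between the horizontal mate and the vertical mate $(\gamma^\Phi_u)\inv_!$, and the coherence of the resulting $\Psi$ and of $\eta,\varepsilon$ as modifications reduces to the (2-)functoriality of the mate correspondence with respect to pasting.

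One small understatement to flag. You say the double-mate equivalence is ``a direct diagram chase with the triangle identities.'' The cleanest justification is slightly more structural: the composites $u_!\Phi_J$ and $\Phi_K u_!$ are both functors $\D(J)\to\E(K)$ admitting right adjoints $\Psi_J u^\ast$ and $u^\ast\Psi_K$; a natural transformation between left adjoints is invertible if and only if its conjugate between the right adjoints is, because the conjugate bijection $\Hom(u_!\Phi_J,\Phi_K u_!)\cong\Hom(u^\ast\Psi_K,\Psi_J u^\ast)$ is contravariantly functorial for composition and preserves identities (so sends $\alpha\circ\alpha\inv$ and $\alpha\inv\circ\alpha$ to identities). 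That the conjugate of $(\gamma^\Phi_u)\inv_!$ coincides with your $\gamma^\Psi_u$ is the Kelly--Street compatibility of horizontal and vertical mates. So the content is the functoriality of mates, not only the triangle identities — but this is a presentational point, not a gap, and the argument as a whole is correct.
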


The morphism $\Phi$ being cocontinuous allows us to construct the coherence isomorphisms $\gamma^\Psi_u$ and `glue together' the various $\Psi_K$. We are not claiming anything like an adjoint functor theorem for general derivators, so this lemma does not admit a converse.

There are two classes of examples that give us everything we need for this paper. Let $u\colon J\to K$ be a functor in $\Cat$. If $u$ admits a categorical right\linebreak adjoint $v\colon K\to J$, then $u^\ast\colon\D(K)\to\D(J)$ is right adjoint to $v^\ast\colon\D(J)\to\D(K)$ because (strict) 2-functors send adjunctions to adjunctions, though in our case which is left and which is right swaps. We can upgrade this to, for any prederivator $\D$, a (cocontinuous) left adjoint morphism $v^\ast\colon \D^K\to\D^J$ which preserves any left Kan extensions that $\D^K$ happens to have.

If $\D$ is a left derivator, then the left adjoint functor $u_!\colon \D(J)\to\D(K)$ lifts to a left adjoint morphism of derivators $u_!\colon \D^J\to\D^K$ with right adjoint $u^\ast$. Similarly, if~$\D$ is a right derivator, $u_\ast\colon \D^J\to\D^K$ is a right adjoint morphism of derivators with left adjoint $u^\ast$. In fact, for any prederivator $\D$, the morphism $u^\ast\colon\D^K\to\D^J$ preserves all left and right Kan extensions that $\D$ happens to have by \cite[Proposition~2.5]{Gro13} for categorical reasons.

Finally, suppose $\D$ is a pointed derivator and $u\colon J\to K$ is a sieve. Then the right extension by zero $u_\ast\colon\D(J)\to\D(K)$ admits an exceptional right adjoint by Proposition~\ref{prop:stronglypointed}, so $u_\ast$ is a left adjoint and hence cocontinuous.

\section{Left pointed derivators}

Having set up the basic vocabulary of the theory of derivators, we can begin to examine what we actually need for K-theory.

To motivate the following definition, we recall the definition of $K_0$ of an abelian category~$\cA$. It is constructed as the free abelian group on (isomorphism classes of) objects $A\in \cA$, written $[A]\in K_0(\cA)$, under the relation that if $0\to A\to B\to C\to 0$ is a short exact sequence, we have $[B]=[A]+[C]$. A short exact sequence is equivalently a cocartesian square
\begin{equation*}
\vcenter{\xymatrix@R=1em@C=1em{
A\ar[r]\ar[d]&B\ar[d]\\
0\ar[r]&C
}}
\end{equation*}
under the assumption that $A\to B$ is a monomorphism. Thus if we are to construct even $K_0$ for a derivator, it needs to admit a notion of (coherent) cocartesian squares and a zero object.

\begin{notn}\label{notn:ordinals}
For $n\in \N$, let $[n]$ denote the totally ordered set with $n+1$ elements:
\begin{equation*}
0\to 1\to\cdots\to n-1\to n
\end{equation*}
Each of these are finite free categories.
\end{notn}
\begin{notn}\label{notn:square}
Let $\square$ be the category $[1]\times[1]$, with labelling
\begin{equation*}
\xymatrix@C=1em@R=1em{
(0,0)\ar[r]\ar[d]&(1,0)\ar[d]\\
(0,1)\ar[r]&(1,1)
}
\end{equation*}
Let $i_{\ul}\colon\ul\to\square$ be the full subcategory of $\square$ lacking the element $(1,1)$. 
\end{notn}

\begin{defn}
Let $\D$ be a left derivator and $X\in\D(\square)$. We say that $X$ is \emph{cocartesian} (\ie a pushout square) if $X$ is in the essential image of $i_{\ul,!}\colon\D(\ul)\to\D(\square)$. Otherwise put, $X$ is cocartesian if the counit $i_{\ul,!}i^\ast_\ul X\to X$ of the $(i_{\ul,!},i^\ast_\ul)$ adjunction is an isomorphism.
\end{defn}

This is where Muro and Raptis obtained their domain for derivator K-theory: they considered left derivators which admit a zero object. However, would like to be able to \emph{construct} pushouts appropriate for computing $K_0$ as above. This means coherently making cocartesian squares starting from an element in $\D(\ul)$ of the form
\begin{equation}\label{dia:cone-ell}
\vcenter{\xymatrix@R=1em@C=1em{
a\ar[r]\ar[d]&b\\
0
}}
\end{equation}
In order to construct this $\ul$-shaped diagram starting from a coherent\linebreak arrow $(a\to b)\in \D([1])$, we need more than the structure of a left derivator. 

\begin{defn}\label{defn:leftpointedder}
A prederivator $\D\colon\Dia\op\to\CAT$ is a \emph{left pointed derivator} if it is a strong left derivator, $\D(e)$ is pointed, and for every sieve $u\colon J\to K$, $u^\ast$ admits a right adjoint $u_\ast$ satisfying Der4R.
\end{defn}

Indeed, the inclusion $i_{[1]}\colon[1]\to \ul$ is a sieve, so by Proposition~\ref{prop:extensionbyzero} we can compute that
\begin{equation}\label{dia:coneconstruction}
(a\overset f \to b)\overset{i_{[1],\ast}}\longrightarrow
\vcenter{\xymatrix@R=1em@C=1em{
a\ar[r]^-f\ar[d]&b\\
0
}}
\overset{i_{\ul,!}}\longrightarrow
\vcenter{\xymatrix@R=1em@C=1em{
a\ar[r]^f\ar[d]&b\ar[d]\\
0\ar[r]&C(f)
}}
\end{equation}
where we have named the object at the $(1,1)$ position the \emph{cone} of the (coherent) morphism $f\in \D([1])$. This means that there is a morphism of derivators $\D^{[1]}\to\D^\square$ realising the above diagram.

\begin{remark}
Given that we can lift incoherent diagrams in the shape of finite free categories, it should be pointed out that $\ul$ is such a shape. Therefore since we can build diagrams of shape Diagram~\ref{dia:cone-ell} incoherently, we can lift them to coherent objects of~$\D(\ul)$. From that point we can take the coherent pushout via $i_{\ul,!}$. We cannot lift `incoherent pushout squares' because $\square$ is not finite free.

However, this process spoils any hope of functoriality in the construction of the coherent pushout of a morphism starting from $\D([1])$, and this functoriality is essential. The requirement that our left pointed derivators be strong is used only to check the the computation at Diagram~\ref{dia:Ppushout}; it requires using `incoherent reasoning' that must be lifted up to the derivator and does not interfere with any functoriality.

It may be possible that the computation can be made without strongness, but this author does not have a proof. It may also be that this computation \emph{requires} strongness, and a proof is also lacking for this possibility. This small point does not take away from the main result of the paper, so we leave it for future consideration.
\end{remark}

The key example of a left pointed derivator, and indeed the motivation of the abstract defintion, is the following, drawn from Corollary~2.24, Proposition~3.4, and Lemma~4.3 in \cite{Cis10}.

\begin{lemma}\label{lemma:waldhausenderivator}
Let $\cW$ be a saturated Waldhausen category satisfying the cylinder axiom. Then the associated prederivator $\D_\cW\colon K\mapsto \Ho(\Fun(K,\cW))$ defined on $\Dirf$ is a (strong) left pointed derivator. Moreover, an exact functor of Waldhausen categories induces a cocontinuous morphism of the corresponding derivators. In particular, these morphisms preserve cocartesian squares and the zero object.
\end{lemma}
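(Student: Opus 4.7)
The plan is to unpack Definition~\ref{defn:leftpointedder} and verify each clause by appealing to the cited results of \cite{Cis10}, since the lemma is essentially a translation of Cisinski's theory into the present vocabulary. I would proceed axiom by axiom.

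First, I would verify Der1 and Der2 directly. The decomposition
\begin{equation*}
\Fun\Bigl(\coprod_{a\in A} K_a,\cW\Bigr)\cong\prod_{a\in A}\Fun(K_a,\cW)
\end{equation*}
is strict and preserves pointwise weak equivalences, so descends to the required equivalence on homotopy categories, giving Der1. For Der2, saturation of $\cW$ means that a map of $K$-diagrams which is a pointwise weak equivalence is itself a weak equivalence in $\Fun(K,\cW)$ (with respect to the injective Waldhausen structure), so becomes an isomorphism after passing to $\Ho$, yielding conservativity of $\dia_K$.

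Next, Der3L, Der4L, and the strongness axiom are exactly the content of Corollary~2.24 of \cite{Cis10}, which shows that the saturation plus cylinder hypothesis is sufficient to construct homotopy left Kan extensions along every functor in $\Dirf$ and to establish the pointwise formula. Strongness is packaged into the same machinery, since Cisinski's model for coherent diagrams on finite free categories produces liftings of incoherent diagrams and morphisms between them. Pointedness of $\D_\cW(e)=\Ho(\cW)$ is immediate: the zero object of $\cW$ remains a zero in the homotopy category. For the sieve clause, given a sieve $u\colon J\to K$, I would invoke Proposition~3.4 of \cite{Cis10}, which constructs $u_\ast$ explicitly as extension by zero on the Waldhausen side (inserting the basepoint outside the image of $u$) and verifies both the adjunction and the dual pointwise formula Der4R at those $k\in K$ lying over $(u/k)$; the sieve hypothesis ensures the relevant comma categories are either initial in $J$ or empty, so the computation reduces to a straightforward case analysis.

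For the second assertion, let $F\colon\cW\to\cW'$ be exact. Then postcomposition with $F$ gives a strict functor $F_K\colon\Fun(K,\cW)\to\Fun(K,\cW')$ preserving weak equivalences pointwise, so it descends to $(F_K)_\ast\colon \D_\cW(K)\to\D_{\cW'}(K)$. These assemble into a strict morphism of prederivators because everything in sight is genuinely functorial before passing to homotopy categories; this is Lemma~4.3 of \cite{Cis10}. Cocontinuity follows because $F$ preserves cofibrations, pushouts along cofibrations, and the zero object, hence preserves the explicit models for homotopy left Kan extensions of \cite{Cis10}; equivalently, the left mate of the (identity) structure isomorphism is an isomorphism after applying the pointwise formula Der4L. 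The preservation of cocartesian squares and of $0$ is then immediate from cocontinuity.

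The main obstacle here is not combinatorial but bibliographic: one must trust that Cisinski's construction of homotopy Kan extensions via diagram categories genuinely satisfies the pointwise formula and the strongness axiom simultaneously. Assuming \cite{Cis10}, the proof is a matter of matching definitions; the only subtle point is checking that the right adjoint $u_\ast$ along a sieve constructed by extension by zero really satisfies Der4R rather than merely existing as a functor, which requires identifying the comma category $(u/k)$ for $k\notin u(J)$ as empty and for $k\in u(J)$ as having the relevant terminal object.
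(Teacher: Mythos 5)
Your proposal is essentially the same as the paper's treatment: the paper states this lemma without proof, attributing it to Corollary~2.24, Proposition~3.4, and Lemma~4.3 of \cite{Cis10}, and your sketch unpacks exactly those three results, plus the easy direct verifications of Der1 and Der2. One small terminological wobble: referring to an \emph{injective Waldhausen structure} on $\Fun(K,\cW)$ is not standard usage (that phrasing belongs to model categories); the point is simply that the weak equivalences of $\Fun(K,\cW)$ are declared to be the levelwise ones, and saturation of $\cW$ (hence of $\Fun(K,\cW)$) then gives conservativity of $\dia_K$. Otherwise your account of where each axiom comes from matches the paper's intent.
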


Recall that derivators need not be defined on all of $\Cat$, but on sub-2-categories $\Dia\subset\Cat$ satisfying some closure properties. One key example is $\Dirf$, which consists of all \emph{finite direct categories}, \ie categories whose nerve has only finitely many nondegenerate simplices. These are also called \emph{homotopy finite categories} by \cite{Arl20} and \cite{GroPonShu14b}. General Waldhausen categories will not admit arbitrary colimits and will admit no (non-empty) limits whatsoever.

For the purposes of K-theory, $\Dirf$ is an ideal domain for our left pointed derivators. Homotopical cocompleteness for all of $\Cat$ means the existence of infinite coproducts. This allows for a derivator version of the usual Eilenberg swindle on K-theory, \eg \cite[V.1.9.1]{Wei13}. Since this trick requires additivity, we will prove it below as a corollary of the main theorem at Proposition~\ref{prop:derivatorswindle}.

Hereafter we let $\Der_K$ be the 1-category with objects strong left pointed derivators on $\Dirf$ and morphisms cocontinuous morphisms of derivators up to invertible modification. That is, we consider $\Phi,\Psi\colon\D\to\E$ to be the same if there exists a zig-zag of invertible modifications from $\Phi$ to $\Psi$. We do this because such morphisms will induce homotopic maps on K-theory, as we will show in Corollary~\ref{cor:strictification} shortly. We will leave the adjective `strong' implicit throughout.

\section{Derivator K-theory}

It is helpful at this point to recall Waldhausen's K-theory for a category with cofibrations and weak equivalences from~\cite{Wal85}. To such a category $\cW$ we assign a simplicial object in Waldhausen categories $\mathbf{S}_\bullet\cW$, where $\mathbf{S}_n\cW$ is the category of exact functors from the arrow category $\Ar[n]$ to $\cW$. Taking the wide subcategory with only maps the weak equivalences $w\mathbf{S}_\bullet\cW$, we obtain again a simplicial Waldhausen category. Then we define K-theory as follows:
\begin{equation*}
K(\cW):=\Omega|N_\bullet w\mathbf{S}_\bullet\cW|,
\end{equation*}
the loop space of the (diagonal) geometric realisation of the bisimplicial set given by the nerve.

In~\cite{MurRap17}, Muro and Raptis improved upon a construction of Garkusha in~\cite{Gar05} which generalises Waldhausen's $\mathbf{S}_\bullet$ construction. First, we can restate the $\mathbf{S}_\bullet$ construction in the language of derivators. To help with notation, for a category $[n]\in\Delta$, let the elements of its arrow category $\Ar[n]$ be written $(i, j)$ for $i\to j$. 

Let $\D$ be a left pointed derivator. We let $\mathbf{S}_n\D$ be the full subcategory of $\D(\Ar[n])$ of objects $X$ such that:
\begin{enumerate}
\item For every $0\leq i \leq n$, $(i, i)^\ast X\in\D(e)$ is a zero object.
\item For every fully faithful inclusion $\iota\colon \square\to\Ar[n]$, the object $\iota^\ast X\in\D(\square)$ is cocartesian.
\end{enumerate}
In (2), it suffices to check only the inclusions such that $\iota(0,1)=(i, i)$ by~\cite[Proposition~3.13]{Gro13}. We then define \emph{derivator K-theory} by
\begin{equation*}
K(\D)=\Omega|N_\bullet i\Sdot\D|
\end{equation*}
where $i\mathbf{S}_n\D\subset \mathbf{S}_n\D$ is the wide subcategory consisting only of isomorphisms, in analogy with $w\mathbf{S}_n\cC$, and the geometric realization is taking diagonally.

To give a few examples, first we have that $\mathbf{S}_0\D\subset\D(\Ar[0])=\D(e)$ is trivial: it has only one object and property (1) above requires it to be a zero object of $\D(e)$. The category $\mathbf{S}_1\D \subset\D(\Ar[1])$ is slightly more interesting: it is a staircase with one nontrivial object:
\begin{equation*}
\vcenter{\xymatrix@R=1em@C=1em{
0\ar[r]&a\ar[d]\\
&0
}}
\end{equation*}
There are no fully faithful inclusions $\square\to\Ar[1]$, so there is nothing else to require. We can see that $\mathbf{S}_1\D=\D(e)$ as a category, an observation we will use later. The first interesting category is $\mathbf{S}_2\D\subset\D(\Ar[2])$, whose objects have the form
\begin{equation*}
\vcenter{\xymatrix@R=1em@C=1em{
0\ar[r]&a\ar[r]^-f\ar[d]&b\ar[d]^-g\\
&0\ar[r]&c\ar[d]\\
&&0
}}
\end{equation*}
with the requirement that the square is cocartesian. This is where we see $K_0(\D)$ being encoded: the zero simplices of $K(\D)=\Omega|N_\bullet i\Sdot\D|$ come from $\mathbf{S}_1\D$ (because $\Omega$ gives a dimension shift) and these zero simplices are identified in $\pi_0 K(\D)$ due to the existence of a path, \ie an element of $\mathbf{S}_2\D$, relating them. Thus three objects appearing a cocartesian square in $\D(\square)$ leads to a relation on the homotopy classes of zero simplices in $\pi_0 K(\D)$. The same thing happens at each $\pi_n K(\D)$, but the relationship is more difficult to describe for large values of $n$.

We said above that in order for a morphism $\Phi\colon\D\to\E$ to induce a map on K-theory, it needs to preserve cocartesian squares and the zero object, which we could then conclude was equivalent to asking for $\Phi$ to be cocontinuous. However, there is another problem. If we have a cocontinuous morphism $\Phi\colon\D\to\E$ that is not \emph{strict}, then $\Phi$ may not induce a map of simplicial sets $\Sdot\D\to\Sdot\E$. If we take (for example) the face map $d_i\colon[n]\to[n+1]$, then we obtain a diagram
\begin{equation*}
\xymatrix@C=4em{
\mathbf{S}_{n+1}\D\ar[d]_{d_i^\ast}\ar[r]^{\Phi_{\Ar[n+1]}}&\mathbf{S}_{n+1}\E\ar[d]^{d_i^\ast}\ar@{}[dl]|\swtrans\ar@{}[dl]<-1.75ex>|{\gamma_{d_i}^\Phi}\\
\mathbf{S}_{n}\D\ar[r]_{\Phi_{\Ar[n]}}&\mathbf{S}_n\E
}
\end{equation*}
But this diagram commutes only up to natural isomorphism, so $\Phi$ does not give us an honest natural transformation of the bisimplicial sets $N_\bullet i\mathbf{S}_\bullet\D\to N_\bullet i\mathbf{S}_\bullet\E$. However, we have the following proposition to aid us.
\begin{prop}[Proposition~10.14,~\cite{CisNee08}]\label{prop:strictification} Let $\Phi\colon\D\to\E$ be a morphism of prederivators. Then there exists a prederivator $\widetilde{\D}$, a strict equivalence of derivators $\Pi_\Phi\colon\widetilde{\D}\to\D$, and a strict morphism $\widetilde{\Phi}\colon\widetilde{\D}\to\E$ such that the following diagram commutes:
\begin{equation*}
\xymatrix@R=1em{
&\widetilde{\D}\ar[dl]_{\Pi_\Phi}^(0.4){\sim}\ar[dr]^{\widetilde \Phi}&\\
\D\ar[rr]_\Phi&&\E
}
\end{equation*}
\end{prop}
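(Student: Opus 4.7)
The plan is to construct $\widetilde{\D}$ via a comma-type construction that absorbs the pseudonaturality data $\gamma^\Phi$ into the objects themselves. For each $K\in\Cat$, define $\widetilde{\D}(K)$ to be the category whose objects are triples $(X,Y,\psi)$ with $X\in\D(K)$, $Y\in\E(K)$, and $\psi\colon\Phi_K X\xrightarrow{\sim} Y$ an isomorphism, and whose morphisms $(X,Y,\psi)\to (X',Y',\psi')$ are pairs $(f\colon X\to X',\, g\colon Y\to Y')$ making the square with $\psi,\psi'$ commute. Intuitively, an object is an element of $\D(K)$ together with a prescribed choice of its image under $\Phi$; this rigidification is what allows strictness downstream.

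Next I would define the pullback $u^\ast\colon\widetilde{\D}(K)\to\widetilde{\D}(J)$ for $u\colon J\to K$ by
$$u^\ast(X,Y,\psi)=\bigl(u^\ast X,\, u^\ast Y,\; u^\ast\psi\circ(\gamma_u^\Phi)_X^{-1}\bigr),$$
and analogously on morphisms and on 2-cells $\alpha\colon u\Rightarrow v$ componentwise. The crux is to verify strict 2-functoriality: the coherence axioms for the pseudonatural transformation $\Phi$ (the cocycle condition on $\gamma^\Phi$ under composition and its normalisation on identities) translate exactly into the on-the-nose identities $(vu)^\ast=u^\ast v^\ast$ and $(\id_J)^\ast=\id$ on $\widetilde{\D}$. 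This is where the pseudo structure on $\Phi$ is \emph{used up}, leaving $\widetilde{\D}$ strictly 2-functorial.

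Then $\Pi_\Phi\colon\widetilde{\D}\to\D$ and $\widetilde{\Phi}\colon\widetilde{\D}\to\E$ are the coordinate projections $(X,Y,\psi)\mapsto X$ and $(X,Y,\psi)\mapsto Y$ respectively. Both are strict by construction: the would-be coherence isomorphisms $\gamma^{\Pi_\Phi}_u$ and $\gamma^{\widetilde\Phi}_u$ each reduce to the identity on $u^\ast X$ (resp.\ $u^\ast Y$), because those coordinates are transported by the genuine pullbacks in $\D$ and $\E$. The morphism $\Pi_\Phi$ is a levelwise equivalence: essential surjectivity is witnessed by $X\mapsto (X,\Phi_K X,\id)$, and fully-faithfulness follows because, once the first coordinate of a morphism is fixed, $\psi,\psi'$ force the second. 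The triangle then commutes up to the invertible modification $\mu\colon\Phi\circ\Pi_\Phi\Rightarrow\widetilde\Phi$ whose component at $(X,Y,\psi)$ is $\psi$ itself; naturality of $\mu$ in $K$ is precisely the formula used to define the third coordinate of $u^\ast$.

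The main obstacle I anticipate is the bookkeeping for the strict 2-functoriality in the second step: one has to carefully unpack the hexagonal coherence axiom for pseudonatural transformations to check that applying $u^\ast$ and then $v^\ast$ on $\widetilde{\D}$ produces the same third coordinate as applying $(vu)^\ast$, and similarly on 2-cells. Once this cocycle identity is in hand, everything else is formal. As a coda, I would note that since $\Pi_\Phi$ is a levelwise equivalence, $\widetilde\D$ automatically inherits the property of being a strong left pointed derivator from $\D$ (the axioms are equivalence-invariant on each $\D(K)$), so that $\widetilde{\D}\in\Der_K$ and the factorisation is legitimate within our working category.
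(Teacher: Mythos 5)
Your construction is correct and is the standard ``iso-comma'' strictification; the paper itself states the result only as a citation to Cisinski--Neeman, so there is no in-text proof to compare against, but your approach is what one expects behind their Proposition~10.14. The key verifications all check out: (i) the third coordinate of $u^\ast(X,Y,\psi)$ has the right type, $\Phi_J(u^\ast X)\to u^\ast Y$; (ii) strict 2-functoriality $(vu)^\ast=u^\ast v^\ast$ reduces, after stripping the (already strict) pullbacks of $\psi$ in $\E$, to the cocycle identity
\begin{equation*}
(\gamma_{vu}^\Phi)_X=(\gamma_u^\Phi)_{v^\ast X}\circ u^\ast\bigl((\gamma_v^\Phi)_X\bigr),
\end{equation*}
which is exactly the composition coherence axiom for the pseudonatural transformation $\Phi$, and the 2-cell case is the remaining modification axiom; and (iii) $\Pi_\Phi$, $\widetilde\Phi$ are strict by design since the $\gamma$-data was pushed entirely into the third coordinate.

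One small point worth making explicit: with your construction the triangle commutes only up to the invertible modification $\mu$ with component $\psi$, and in fact it \emph{cannot} commute on the nose. Since $(\Pi_\Phi)_K$ is surjective on objects (via $X\mapsto(X,\Phi_K X,\id)$), strict equality $\widetilde\Phi=\Phi\circ\Pi_\Phi$ would force $u^\ast\Phi_K=\Phi_J u^\ast$ on all of $\D(K)$, i.e.\ $\Phi$ would already have been strict. So ``commutes'' in the statement must be read up to invertible modification --- exactly what is needed for the $K$-theoretic applications, where one passes to the zigzag $K(\D)\xleftarrow{\sim}K(\widetilde\D)\to K(\E)$ and invertible modifications induce simplicial homotopies. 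Your final remark, that $\widetilde\D$ inherits the strong left pointed derivator structure because $\Pi_\Phi$ is a strict levelwise equivalence, is also the right thing to say and is what makes the factorisation usable inside $\Der_K$.
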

We name the equivalence $\Pi_\Phi$ because it is some sort of projection, though we will not need the precise formula. If $\Phi$ is cocontinuous, $\widetilde\Phi$ is also cocontinuous because it is the composition of cocontinuous morphisms. The following corollary is found at~\cite[Corollary~10.19]{CisNee08} or~\cite[Remark~5.1.4]{MurRap17}. 
\begin{cor}\label{cor:strictification}
Any cocontinuous morphism of left pointed derivators $\Phi\colon\D\to \E$ gives rise to a map on derivator K-theory $K(\Phi)\colon K(\D)\to K(\E)$ in $\cS$, the homotopy category of spaces. Moreover, this association is functorial, in the sense that we have a 1-functor $\Der_K\to \cS$ after inverting isomodifications in $\Der_K$ to obtain a 1-category.
\end{cor}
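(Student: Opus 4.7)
The plan is to invoke Proposition~\ref{prop:strictification} to reduce every cocontinuous morphism to a strict one, for which the $\Sdot$-construction tautologically yields a morphism of bisimplicial sets. Given a cocontinuous $\Phi\colon\D\to\E$, the strictification produces a prederivator $\widetilde\D$, a strict equivalence $\Pi_\Phi\colon\widetilde\D\to\D$, and a strict morphism $\widetilde\Phi\colon\widetilde\D\to\E$ with $\Phi\circ\Pi_\Phi=\widetilde\Phi$. Since equivalences of derivators are cocontinuous and composites of cocontinuous morphisms remain cocontinuous, $\widetilde\Phi$ is cocontinuous as well.

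The key observation is that a strict cocontinuous morphism $\Psi\colon\D'\to\D''$ induces an honest map of bisimplicial sets $N_\bullet i\Sdot\D'\to N_\bullet i\Sdot\D''$. Strictness forces $u^\ast\Psi_K=\Psi_J u^\ast$ for every $u\colon J\to K$, so the face and degeneracy functors arising from pullback along the structure maps of $\Ar[\bullet]$ commute with $\Psi$ on the nose, resolving the problem described in the paragraph preceding Proposition~\ref{prop:strictification}. Cocontinuity plus preservation of the zero object ensure that $\Psi$ sends objects satisfying conditions (1) and (2) defining $\Sn\D'$ to objects satisfying the same conditions in $\Sn\D''$, so $\Psi$ restricts to $\Sdot\D'\to\Sdot\D''$, and restriction to isomorphism subcategories is automatic since any functor preserves isomorphisms. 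Applying this to both $\widetilde\Phi$ and $\Pi_\Phi$ and noting that $\Pi_\Phi$ restricts to a levelwise equivalence $\Sdot\widetilde\D\simeq\Sdot\D$ (and hence a weak equivalence on K-theory), I would set
\begin{equation*}
K(\Phi):=K(\widetilde\Phi)\circ K(\Pi_\Phi)^{-1}\in\cS.
\end{equation*}

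The main obstacle will be functoriality, in particular independence from the choices made in strictification and well-definedness on invertible-modification classes. The linchpin is that an invertible modification $\rho\colon\Psi\Rightarrow\Psi'$ between strict cocontinuous morphisms produces, at each $n$, a natural isomorphism $\rho_{\Ar[n]}\colon\Psi_{\Ar[n]}\Rightarrow\Psi'_{\Ar[n]}$ which restricts to the relevant isomorphism subcategories and therefore yields a simplicial homotopy between the induced maps of bisimplicial sets, identifying them in $\cS$. Two strictifications of the same $\Phi$ are connected by a canonical invertible modification arising from the universal property implicit in Proposition~\ref{prop:strictification}, and the strictification of a composite $\Psi\circ\Phi$ agrees with the composite of strictifications up to such a modification, giving $K(\Psi\circ\Phi)=K(\Psi)\circ K(\Phi)$ in $\cS$. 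Because $\Der_K$ is defined precisely by quotienting out invertible modifications, the assignment $\Phi\mapsto K(\Phi)$ descends to the promised 1-functor $\Der_K\to\cS$.
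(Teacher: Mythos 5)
Your approach is essentially the same one the paper relies on: the paper gives no argument of its own but points to \cite[Corollary~10.19]{CisNee08} and \cite[Remark~5.1.4]{MurRap17}, and the content of those references is exactly the strictify-then-invert strategy you lay out. Defining $K(\Phi):=K(\widetilde\Phi)\circ K(\Pi_\Phi)^{-1}$ after observing that strict cocontinuous morphisms induce honest maps of bisimplicial sets and that $\Pi_\Phi$ is a levelwise equivalence is the right reconstruction, and the observation that an invertible modification yields a levelwise natural isomorphism, hence a simplicial homotopy after applying $N_\bullet i(-)$, is the correct mechanism for descent to $\Der_K$.

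The one place your write-up leans on something not actually available in the paper is the appeal to a ``universal property implicit in Proposition~\ref{prop:strictification}.'' That proposition asserts existence of a strictification but states no uniqueness or 2-universal property, so you cannot directly extract a ``canonical invertible modification'' comparing two strictifications of the same $\Phi$, nor the compatibility of strictification with composition that you need for $K(\Psi\circ\Phi)=K(\Psi)\circ K(\Phi)$. Both facts are true, but proving them requires unwinding the actual construction of $\widetilde\D$ (which, roughly, encodes the pseudonaturality data of $\Phi$ as part of the objects) and checking that a morphism of the input data induces a strict morphism of strictified derivators; this is precisely what \cite{CisNee08} and \cite{MurRap17} supply. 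As written, your functoriality argument has the shape of the correct proof but rests on an unproved claim; to make it self-contained you would need to spell out the comparison of strictifications rather than invoke a universal property that Proposition~\ref{prop:strictification} does not state.
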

An immediate consequence is that equivalent left pointed derivators have equivalent K-theories.

There are some first results that are worth collecting. Maltsiniotis in~\cite{Mal07} proved that, if $\D$ is a triangulated derivator, then $K_0(\D(e))$ of the underlying triangulated category is equivalent to $K_0(\D)$. He also established a comparison map from Quillen's K-theory of an exact category to derivator K-theory, which was subsequently extended to Waldhausen categories by Garkusha in~\cite{Gar06}.

Specifically, let $\cW$ be a saturated Waldhausen category satisfying the cylinder axiom as in Lemma~\ref{lemma:waldhausenderivator} so that it gives rise to a left pointed derivator $\D_\cW$ (hereafter we will leave these assumptions on $\cW$ implicit). Then we have an obvious map
\begin{equation*}
\Fun(\Ar[n],\cW)\to\D_\cW(\Ar[n])=\Ho(\Fun(\Ar[n],\cW))
\end{equation*}
sending a diagram to its homotopy class. This map restricts to $\mathbf{S}_n\cW\to\mathbf{S}_n\D_\cW$, sends weak equivalences to isomorphisms, and behaves well with respect to the simplicial structures, so we obtain a map of spaces
\begin{equation*}
\mu\colon K(\cW)\to K(\D_\cW)
\end{equation*}

Maltsiniotis' proof is easily rewritten to imply that $\mu_0:=\pi_0\mu$ is an isomorphism, and Muro in~\cite{Mur08} proved that $\mu_1$ is an isomorphism as well. Muro's techniques are a bit ad hoc, but recent work of Raptis \cite[Theorem~5.5]{Rap19} proves that the comparison map $\mu$ is  {2-connected}, recovering Muro's result on $\mu_1$ and proving that $\mu_2$ is surjective. Maltsiniotis conjectured that $\mu$ should be a weak homotopy equivalence in the case that $\D_\cW$ is triangulated, and the same question can be asked in general.

Unfortunately, the conjecture fails almost totally. Muro and Raptis together in~\cite{MurRap11} show that $\mu$ will generally not be an equivalence for triangulated derivators arising from stable module categories. In that same work, the authors use the example of Schlichting in~\cite{Sch02} to prove that any K-theory of derivators invariant under equivalences of derivators cannot satisfy both agreement and send Verdier localizations of triangulated derivators to homotopy fibrations in K-theory. Such a localization theorem was also conjectured by Maltsiniotis. Raptis conjectures that $\mu$ should not be more than 2-connected in great generality.

One positive result is a theorem of Cisinski and Neeman~\cite{CisNee08} that derivator K-theory of triangulated derivators satisfies an additivity theorem, to be made more explicit shortly. Muro and Raptis in their second paper on derivator K-theory~\cite{MurRap17} asked whether this additivity proof could be adapted to the more general context of left pointed derivators. The positive answer to this question occupies the next section.

\section{Additivity}

Rather than approach the problem as Cisinski and Neeman did in~\cite{CisNee08} using Neeman's method of regions, we will prove additivity in a novel way. Throughout, let $\D$ be a left pointed derivator defined on $\Dia=\Dirf$. We adapt this first definition from~\cite[Definition~11.7]{CisNee08}.

\begin{defn}
Let $\D$ be a left pointed derivator. We define the corresponding \emph{cofiber sequence category} for each $K\in\Dirf$ by $\Dcof(K)\subset\D^K(\square)$ the full subcategory of cocartesian squares $X$ such that $(0,1)^\ast X = 0\in\D(K)$.
\end{defn}

\begin{lemma}
The cofiber sequence categories assemble to a prederivator $\Dcof$. Moreover, there is an equivalence $\D^{[1]}\to\Dcof$ which is pseudonatural with respect to cocontinuous morphisms of derivators, which makes $\Dcof$ a left pointed derivator as well.
\end{lemma}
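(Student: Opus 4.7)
The plan is in three steps: first, verify $\Dcof$ is a subprederivator of $\D^\square$; second, construct a quasi-inverse pair between $\D^{[1]}$ and $\Dcof$ and check the required pseudonaturalities; third, transport the left pointed derivator structure of $\D^{[1]}$ along the equivalence. For step one, given $u\colon J \to K$ in $\Dirf$, the pullback $u^* \colon \D^K(\square) \to \D^J(\square)$ preserves the zero object and preserves cocartesian squares, since $u^*$ is a strict morphism of shifted derivators that preserves all Kan extensions by \cite[Proposition~2.5]{Gro13}. Hence $u^*$ restricts to $\Dcof(K) \to \Dcof(J)$, assembling $\Dcof$ into a prederivator.

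For the equivalence, I will define $c_K := (i_\ul \times \id_K)_! \circ (i_{[1]} \times \id_K)_*$. The inclusion $i_{[1]} \colon [1] \hookrightarrow \ul$ picking out the top row $\{(0,0),(1,0)\}$ is a sieve, so the first functor is the right extension by zero and inserts $0$ at the $(0,1)$ vertex by Proposition~\ref{prop:extensionbyzero}; the second is a left Kan extension producing a cocartesian square, and because $i_\ul$ is fully faithful the value at $(0,1)$ remains $0$. A quasi-inverse $p_K$ is restriction along the top-row inclusion $[1] \hookrightarrow \square$. Then $p \circ c \cong \id$ follows from the unit isomorphisms of both fully faithful Kan extensions, while $c \circ p \cong \id$ uses that each $Y \in \Dcof(K)$ already has $0$ at $(0,1)$ (so $(i_{[1]})_* (i_{[1]})^*$ acts as the identity on $(i_\ul)^* Y$) and is cocartesian (so $(i_\ul)_! (i_\ul)^* Y \cong Y$). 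Pseudonaturality $u^* c_K \cong c_J u^*$ follows from derivator base change along the evident pullback squares in $\Cat$.

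For pseudonaturality in a cocontinuous morphism $\Phi \colon \D \to \E$, the morphism $\Phi$ commutes with $(i_\ul \times \id_K)_!$ by definition of cocontinuity, and commutes with the sieve extension $(i_{[1]} \times \id_K)_*$ either because the latter admits an exceptional right adjoint by Proposition~\ref{prop:stronglypointed} (making it a left adjoint morphism, hence preserved up to isomorphism by cocontinuous $\Phi$), or by a direct pointwise argument using that $\Phi$ preserves $0$, so the canonical mate is pointwise invertible and hence an isomorphism by (Der2). Finally, $\D^{[1]}$ is a left pointed derivator because the relevant axioms (strongness, pointedness, sieve Der4R) are stable under shifts by finite free categories, so transporting along $c$ endows $\Dcof$ with the same structure. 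The main obstacle I anticipate is verifying that cocontinuity of $\Phi$ alone suffices for commutation with sieve extension by zero; this is where the pointed hypothesis and (Der2) become essential, rather than following formally from the definition of cocontinuity.
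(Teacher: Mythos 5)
Your proposal takes essentially the same route as the paper: the same composite $i_{\ul,!}\circ i_{[1],\ast}$, the same observation that restriction morphisms preserve cocartesian squares and zero objects (so $\Dcof$ is a prederivator), and the same appeal to fully-faithfulness of the two Kan extension steps; you add an explicit quasi-inverse $p$ and a more careful check that the comparison square for a cocontinuous $\Phi$ commutes up to isomorphism, both of which the paper treats more tersely. One caution on your alternative justification: the claim that $i_{[1],\ast}$ being a left adjoint morphism (via its exceptional right adjoint) means it is ``preserved up to isomorphism by cocontinuous $\Phi$'' is not a valid general principle --- cocontinuity gives commutation with left Kan extension morphisms $u_!$, not with arbitrary left adjoint morphisms, and $i_{[1],\ast}$ is a \emph{right} Kan extension --- so the correct argument is the pointwise one you give second: since $\Phi$ preserves $0$, the underlying diagrams of $\Phi\, i_{[1],\ast}X$ and $i_{[1],\ast}\Phi X$ agree, and the mate is then an isomorphism by (Der2); you rightly flag this subtlety yourself.
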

\begin{proof}
For any $u\colon J\to K$ in $\Dirf$, $u^\ast\colon\D^\square(K)\to\D^\square(J)$  is cocontinuous by \cite[Proposition~2.5]{Gro13}. Therefore $u^\ast$ preserves cocartesian squares and the zero object, so restricts to $u^\ast\colon\Dcof(K)\to\Dcof(J)$. There are no modifications needed for the natural transformations, so this makes $\Dcof$ a prederivator.

The equivalence between $\D^{[1]}$ and $\Dcof$ is given by the composition of Diagram~\ref{dia:coneconstruction}:
\begin{equation*}
\xymatrix{
\D^{[1]}\ar[r]^-{i_{[1],\ast}}&\D^{\ul}\ar[r]^-{i_{\ul,!}}&\Dcof\subset\D^\square
}
\end{equation*}

By definition the image of this composite consists of cocartesian squares with the zero object in the $(0,1)$ position. Since $i_{[1]}$ and $i_\ul$ are fully faithful, their left and right Kan extensions are fully faithful by \cite[Proposition~1.20]{Gro13} (which still holds for left pointed derivators), hence the above composite induces an equivalence onto its image, which is precisely $\Dcof\subset\D^\square$.

For the pseudonaturality, consider a morphism of derivators $\Phi\colon \D\to\E$. Then for $\Phi^\square\colon\D^\square\to\E^\square$ to restrict to $\Phi\colon\Dcof\to\Ecof$, it would have to send cocartesian squares to cocartesian squares, and it would have to send the zero object of $\D$ to the zero object of $\E$. As we have assumed $\Phi$ is cocontinuous, this property holds and we are done.
\end{proof}

\begin{remark}\label{rk:rightexactmorphism}
The pseudonaturality with respect to cocontinuous morphisms is the most important takeaway of the preceding lemma. In the below constructions, we will construct morphisms $\Phi\colon \Dcof\to(\Dcof)^K$ for various categories $K\in\Dirf$, but often we will have to define these morphisms first as $\Phi\colon \D\to\D^K$. We may then extend $\Phi$ to a morphism $\Dcof\to(\Dcof)^K$ if $\Phi$ is a cocontinuous morphism.
\end{remark}

\begin{defn}
Let $\D,\E$ be left pointed derivators. We define a \emph{cofibration morphism of derivators} to be a strict cocontinuous morphism $\Xi\colon\D\to\Ecof$. To $\Xi$ we associate three strict cocontinuous morphisms $\D\to\E$
\begin{equation*}
S:=(0,0)^\ast\Xi\qquad T:=(1,0)^\ast\Xi\qquad Q:=(1,1)^\ast\Xi
\end{equation*}
and two strict cocontinuous morphisms $\alpha,\beta\colon \D\to \E^{[1]}$ given by restricting to the top and right arrows of the coherent square, respectively. Incoherently, we have
\begin{equation*}
a\in\D\mapsto
\vcenter{\xymatrix{
S(a)\ar[r]^{\alpha_{a}}\ar[d]&T(a)\ar[d]^{\beta_{a}}\\
0\ar[r]&Q(a)
}}\in\Ecof
\end{equation*}
\end{defn}
This is a coherent version of a cofibration sequence of exact morphisms of Waldhausen categories in~\cite[p.~331]{Wal85}. We prove a similar theorem to~\cite[Proposition~1.3.2]{Wal85}.

\begin{theorem}\label{thm:additivities}
Let $\D$ and $\E$ be left pointed derivators. The following are equivalent:
\begin{enumerate}
\item The map
\begin{equation*}
\xymatrix@C=5em{
\Dcof\ar[r]^-{(0,0)^\ast\times(1,1)^\ast} &\D\times\D
}
\end{equation*}
induces a homotopy equivalence on derivator K-theory.

\item If $\Xi\colon\D\to\Ecof$ is a cofibration morphism of derivators, then there exists a homotopy
\begin{equation*}
K(T)\simeq K(S) \sqcup K(Q)\,(\cong K(S\sqcup Q))
\end{equation*}
\end{enumerate}
\end{theorem}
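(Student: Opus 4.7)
The plan is to follow the structural outline of Waldhausen's proof of additivity, passing between (1) and (2) by exploiting the two canonical sections of $(0,0)^\ast\times(1,1)^\ast$. Write $s_1,s_2\colon\D\to\Dcof$ for the strict morphisms determined incoherently by $s_1(a)=(a\xrightarrow{\id}a\to 0)$ and $s_2(b)=(0\to b\xrightarrow{\id} b)$; each is built from the constant-functor morphism $\pi_{[1]}^\ast$ composed with an extension by zero along a (co)sieve inside $\square$, hence is cocontinuous by Propositions~\ref{prop:extensionbyzero} and~\ref{prop:stronglypointed}.

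For (1)$\Rightarrow$(2), fix a cofibration morphism $\Xi\colon\D\to\Ecof$ and factor $K(T)=K((1,0)^\ast)\circ K(\Xi)$. Using the loop-space sum on $K(\Ecof)$ I would define
$$\Sigma_\E\colon K(\E)\times K(\E)\to K(\Ecof),\qquad (x,y)\mapsto K(s_1)(x)+K(s_2)(y).$$
The identities $(0,0)^\ast s_1=\id$, $(0,0)^\ast s_2=0$, $(1,1)^\ast s_1=0$, $(1,1)^\ast s_2=\id$ yield $\Pi_\E\circ\Sigma_\E\simeq\id$, where $\Pi_\E:=(0,0)^\ast\times(1,1)^\ast$. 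Under~(1), $\Pi_\E$ is an equivalence, so $\Sigma_\E$ is a two-sided homotopy inverse. Since $(1,0)^\ast s_1=(1,0)^\ast s_2=\id_\E$, a parallel calculation gives $K((1,0)^\ast)\circ\Sigma_\E(x,y)\simeq x+y$, so under $\Pi_\E$ the map $K((1,0)^\ast)$ is the H-space sum; precomposing with $K(\Xi)$ yields $K(T)\simeq K(S)+K(Q)$.

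For (2)$\Rightarrow$(1), the same calculation gives $\Pi_\D\circ\Sigma_\D\simeq\id$, so it suffices to construct a homotopy $\Sigma_\D\circ\Pi_\D\simeq\id_{K(\Dcof)}$. I would obtain this by applying~(2) to a cofibration morphism
$$\Xi'\colon\Dcof\to(\Dcof)_{\text{cof}}$$
whose source, target, and quotient morphisms $\Dcof\to\Dcof$ are $s_1\circ(0,0)^\ast$, $\id_{\Dcof}$, and $s_2\circ(1,1)^\ast$ respectively. Incoherently, $\Xi'$ sends $\xi=(a\to t\to q)$ to the outer cofibration sequence in $\Dcof$ with terms $(a\xrightarrow{\id}a\to 0)\to(a\to t\to q)\to(0\to q\xrightarrow{\id}q)$. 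Applying~(2) to $\Xi'$ then yields $\id_{K(\Dcof)}\simeq K(s_1\circ(0,0)^\ast)+K(s_2\circ(1,1)^\ast)=\Sigma_\D\circ\Pi_\D$, completing the proof.

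The main obstacle is the rigorous construction of $\Xi'$ as a strict cocontinuous morphism of derivators whose image lies in $(\Dcof)_{\text{cof}}$. Pointwise cocartesianness of the resulting $\square\times\square$-shaped diagram reduces, by the pointwise computation of left Kan extensions in shifted derivators, to four routine checks at each inner vertex $(i,j)\in\square$ of the outer square, each either trivial or an instance of cocartesianness already built into $\xi$. The subtler point is coherence: the morphism $\Xi'$ should be realised as the cone, inside the (left pointed) derivator $\Dcof$, of the counit $s_1\circ(0,0)^\ast\Rightarrow\id_{\Dcof}$ of the adjunction $s_1\dashv(0,0)^\ast$, first constructing that adjunction at the level of derivators, then converting the counit into a morphism $\Dcof\to\Dcof^{[1]}$, and finally composing with the cone morphism $\Dcof^{[1]}\to(\Dcof)_{\text{cof}}$. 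Strictness is then recovered by Proposition~\ref{prop:strictification}, a replacement which does not affect the induced map on K-theory by Corollary~\ref{cor:strictification}.
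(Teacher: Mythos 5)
Your overall strategy coincides with the paper's: identify the two sections $s_1,s_2\colon\D\to\Dcof$, check that $\Pi\circ\Sigma\simeq\id$ unconditionally, and for $(2)\Rightarrow(1)$ produce a cofibration morphism $\Xi'\colon\Dcof\to(\Dcof)_{\text{cof}}$ whose source, target, and quotient are $s_1(0,0)^\ast$, $\id_{\Dcof}$, $s_2(1,1)^\ast$. Your $(1)\Rightarrow(2)$ direction is sound; the paper packages your ``$\mathrm{sum}\circ\Pi_\E$'' as a single explicit coproduct morphism $\overline\rho\colon\Ecof\to\E$ and compares it to $(1,0)^\ast$ through the section, but it is the same calculation.

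The gap is in the construction of $\Xi'$. You propose to promote the counit modification $s_1(0,0)^\ast\Rightarrow\id_{\Dcof}$ to a morphism $\Dcof\to\Dcof^{[1]}$ and then compose with the cone. In derivator theory that first step is not automatic: a modification $\rho\colon\Phi\Rightarrow\Psi$ gives, for each $K$, an \emph{incoherent} arrow in $\Fun([1],\E(K))$, and lifting to a \emph{coherent} object of $\E(K\times[1])$ requires strongness and is not functorial --- this is exactly the failure emphasized in Remark~\ref{rk:strongproblem}. Your particular counit does admit a coherent lift, but only because both $s_1(0,0)^\ast$ and $\id_{\Dcof}$ are built from restrictions and Kan extensions along maps in $\Dirf$, so the whole counit can be encoded in a single diagram-level functor $\square\times[1]\to\square$, and the cone in a further $\square\times\square\to\square$. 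Exhibiting that functor and defining $\Xi'$ as its pullback is precisely what the paper does with $\xi\colon\square\times\square\to\square$; this makes $\Xi'$ a restriction morphism, hence strict and cocontinuous on the nose (no strictification needed for $\Xi'$ itself), with cocartesianness of the image then checked via the detection lemma. Without writing down that functor, ``convert the counit into a morphism $\Dcof\to\Dcof^{[1]}$'' is a wish, not a step, and it is the key technical content of the proof. A smaller slip: $s_1,s_2$ are cocontinuous but not strict, since they involve a right Kan extension $i_\ast$ along the top-row sieve $[1]\to\square$; this is harmless but should not be asserted.
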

The first statement is the statement of additivity \`a la Garkusha, Maltsiniotis, and Cisinski-Neeman, first found in~\cite[Conjecture~3]{Mal07} and similar to~\cite[Proposition~1.3.2(2)]{Wal85}. The latter is a reinterpretation of~\cite[Proposition~1.3.2(4)]{Wal85}. Our proof follows the strategy set out by Waldhausen.

To expand a little on (1), it will be helpful to use that $\Sdot\D\times\Sdot\D\cong\Sdot\D^{e\sqcup e}$. First, we know that
\begin{equation*}
\D(\Ar[n])\times\D(\Ar[n])\cong\D(\Ar[n]\sqcup\Ar[n])\cong\D^{e\sqcup e}(\Ar[n])
\end{equation*}

Second, for a diagram $X\in\D^{e\sqcup e}(\Ar[n])$, the condition of being in $\mathbf{S}_n\D^{e\sqcup e}$ coincides with each projection to $\D(\Ar[n])$ being in $\mathbf{S}_n\D$. This makes it easier for us to define maps into $\Sdot\D\times\Sdot\D$; they can arise from (strict) cocontinuous morphisms into $\D^{e\sqcup e}$.

In particular, in the spirit of Remark~\ref{rk:rightexactmorphism}, morphisms arising from left adjoint functors are cocontinuous, so for any functor $u\colon J\to K$, $u^\ast,u_!$ induce maps on K-theory. The extension by zero morphisms $u_\ast$ for any sieve $u$ are also cocontinuous, as they admit an exceptional right adjoint. Put another way, $\Der_K$ contains all left and right Kan extension morphisms available to left pointed derivators.

\begin{proof}\ \\
\noindent$(2) \implies (1)$

The map $\rho:=(0,0)^\ast\times(1,1)^\ast$ admits a section $\sigma\colon \D^{e\sqcup e}\to\Dcof$ on K-theory. Incoherently for $(a,c)\in\D^{e\sqcup e}$, the functor $\sigma$ is roughly (but not precisely)
\begin{equation*}
(a,c)\mapsto
\vcenter{
\xymatrix@C=1em@R=1em{
a\ar[r]\ar[d]&a\sqcup c\ar[d]\\ 0\ar[r]& c
}}
\end{equation*}
We will write $\sigma$ as a composite of morphisms of derivators coming from diagram functors, but we will need some diagram notation first.

Recall from Notation~\ref{notn:square} the functor $i_\ul\colon \ul\to\square$. Further, let $i\colon e\sqcup e\to\ul$ be the inclusion into $(1,0)$ and $(0,1)$. We also consider the category $[1]\times [2]$, with labelling
\begin{equation*}
\xymatrix@C=1em@R=1em{
(0,0)\ar[r]\ar[d]&(1,0)\ar[d]\\
(0,1)\ar[r]\ar[d]&(1,1)\ar[d]\\
(0,2)\ar[r]&(1,2)
}
\end{equation*}
Let $J$ be the full subcategory of $[1]\times[2]$ without the element $(1,2)$. Finally, let $i_\square\colon \square\to J$ and $j\colon J\to [1]\times[2]$ be the obvious inclusions, and let $r\colon \square\to [1]\times[2]$ be the inclusion of the bottom square. Note that $i$ is a cosieve, and $i_\ul$, $i_\square$, and $j$ are sieves.
 
At the level of the derivators, the section $\sigma$ is given by
\begin{equation}\label{eq:coprod}
\xymatrix{
\D^{e\sqcup e}\ar[r]^-{i_!}&\D^{\ul}\ar[r]^-{i_{\ul,!}}&\D^{\square}\ar[r]^-{i_{\square,\ast}}&\D^{J}\ar[r]^-{j_!}&\D^{[1]\times[2]}\ar[r]^-{r^\ast}&\D^{\square}\supset\Dcof
}
\end{equation}
All of these maps are cocontinuous, though not all are strict. As we mentioned in Corollary~\ref{cor:strictification}, this means that $\sigma$ will give rise to a well-defined map in $\cS$.

For $(a,c)\in\D^{e\sqcup e}$, $\sigma(a,c)$ is explicitly
\begin{equation*}
(a,c)\mapsto
\vcenter{\xymatrix@C=1em@R=1em{0\ar[r] \ar[d] &c\\a}}\mapsto
\vcenter{\xymatrix@C=1em@R=1em{0\ar[r] \ar[d] &c\ar[d]\\a\ar[r]& a\sqcup c}}\mapsto
\vcenter{\xymatrix@C=1em@R=1em{0\ar[r] \ar[d] &c\ar[d]\\a\ar[r]\ar[d]& a\sqcup c\\0}}\mapsto
\vcenter{\xymatrix@C=1em@R=1em{0\ar[r] \ar[d] &c\ar[d]\\a\ar[r]\ar[d]& a\sqcup c\ar[d]\\0\ar[r]&c'}}\mapsto
\vcenter{\xymatrix@C=1em@R=1em{a\ar[r]\ar[d]& a\sqcup c\ar[d]\\0\ar[r]&c'}}
\end{equation*}
By construction, the image of this composite lands in $\Dcof\subset\D^\square$, as the square $r^\ast j_! X$ is easily shown to be cocartesian using Proposition~\ref{prop:detectionlemma} for any $X\in\D^J$, and $(1,0)^\ast r^\ast j_! X=0$ as long as $X$ is in the image of $i_{\square,\ast}$ (as is our case). Note further that the composite map $c\to a\sqcup c\to c'$ is an isomorphism, as it is the pushout of the isomorphism $0\to 0$.

All this data together gives us the section $\sigma\colon\D^{e\sqcup e}\to\Dcof$. Hence we obtain an isomodification $\id_{\D^{e\sqcup e}}\Rightarrow\rho\sigma$, as the canonical isomorphism $c\to c'$ gives rise to an isomorphism $(a,c)\to\rho\sigma(a,c)=(a,c')$ natural in $(a,c)\in\D^{e\sqcup e}$. On K-theory (after strictifying the non-strict morphisms and passing to $\cS$), this gives a homotopy $K(\rho\sigma)\simeq K(\id_{\D^{e\sqcup e}})$. Therefore it suffices to construct a homotopy in the reverse direction, \ie $K(\sigma\rho)\simeq K(\id_{\Dcof})$.

To that end, we use our assumption. We will construct a cofibration morphism of derivators such that $S\sqcup Q\cong\sigma\rho$ and $T\cong\id_{\Dcof}$. Our morphism $\Xi\colon\Dcof\to(\Dcof)_\text{cof}$ will have the form
\begin{equation}\label{dia:cofibreseq}
\vcenter{\xymatrix@C=1em@R=1em{
a\ar[r]^{f}\ar[d]&b\ar[d]^{g}\\
0\ar[r]&c
}}
\mapsto
\vcenter{\xymatrix@R=1em@C=1em{
a\ar[r]^{=}\ar[d]&a\ar[d]\ar@{}[d]<1ex>_{}="A0"&&a\ar[d]\ar@{}[d]<-1ex>_{}="A1"\ar[r]^f&b\ar[d]^g\\
0\ar[r]\ar@{}[r]<-1ex>_{}="B0"&0&&0\ar[r]\ar@{}[r]<-1ex>_{}="C0"&c\\
&&&&\\
0\ar[r]\ar@{}[r]<1ex>^{}="B1"\ar[d]&0\ar[d]\ar@{}[d]<1ex>^{}="D0"&&0\ar[r]\ar@{}[r]<1ex>^{}="C1"\ar[d]\ar@{}[d]<-1ex>_{}="D1"&c\ar[d]^=\\
0\ar[r]&0&&0\ar[r]&c 
\ar "A0";"A1" ^{\id_a,f} \ar "B0";"B1" \ar "C0";"C1" ^{g,\id_c} \ar "D0";"D1"
}}
\end{equation}
where all squares commute.

We can accomplish this as the pullback along a single functor in $\Dirf$. In the diagram above of shape $\square\times\square$, let the first $\square$ denote the outer square coordinates and the second the inner coordinates. For example, the entry $c$ in the top right is at $(1,0,1,1)$ We now define $\xi\colon\square\times\square\to\square$ by
\begin{equation*}
\xi(a_1,b_1,a_2,b_2)=\begin{cases}
(0,0)&(a_1,b_1,a_2,b_2)=(0,0,0,0),(0,0,1,0),(1,0,0,0)\\
(1,0)&(a_1,b_1,a_2,b_2)=(1,0,1,0)\\
(1,1)&(a_1,b_2,a_2,b_2)=(1,0,1,1),(1,1,1,0),(1,1,1,1)\\
(0,1)&\text{otherwise}
\end{cases}
\end{equation*}
In plain language, we make this definition so that $\xi$ behaves on objects as sketched in Diagram~\ref{dia:cofibreseq}, and it is also the appropriate functor for the maps. For example, consider the map $(1,0,0,0)\to(1,0,1,0)$ in $\square\times\square$. Diagram~\ref{dia:cofibreseq} says that we want
\begin{equation*}
(1,0,0,0)^\ast\xi^\ast X\to (1,0,1,0)^\ast\xi^\ast X = a\overset{f}{\longrightarrow} b
\end{equation*}
We see that $\xi(1,0,0,0)=(0,0)$ and $\xi(1,0,1,0)=(1,0)$, and thus the natural transformation $(\xi(1,0,0,0))^\ast\Rightarrow(\xi(1,0,1,0))^\ast$ induces the map $f\colon a\to b$ when applied to $X\in\Dcof$. Checking that we also have $g$ and identities where required can be done similarly.

We define a strict cocontinuous morphism of derivators $\Xi:=\xi^\ast\colon\D^\square\to\D^{\square\times\square}$. By construction, assuming we restrict our domain to $\Dcof$ (as illustrated), the image will be a global cocartesian square in $\Dcof(\square)$ with zero in the bottom-left corner, so we obtain the required (strict cocontinuous) morphism $\Xi\colon\Dcof\to(\Dcof)_\text{cof}$.

Our assumption gives us that $K(T)\simeq K(S\sqcup Q)$, and clearly $T=\id_{\Dcof}$. It is also evident that we have an isomodification $\sigma\rho\Rightarrow S\sqcup Q$ using again the naturality of the comparison isomorphism $c'\to c$. This shows that $K(\id_{\Dcof})\simeq K(\sigma\rho)$ as required, which proves additivity in the historical sense for derivator K-theory.
\ \\

\noindent $(1) \implies (2)$

First, consider the two maps $(1,0)^\ast\colon\Ecof\to \E$ and $\overline{\rho}\colon\Ecof\to \E^{e\sqcup e}\to\E$, where $\overline\rho$ is defined to be the composite cocontinuous morphism $(1,1)^\ast i_{\ul,!}i_!\rho$ which computes the coproduct of $(0,0)^\ast X$ and $(1,1)^\ast X$ for any $X\in\Ecof$ (see Equation~\ref{eq:coprod}).

We claim that these maps are homotopic. If we precompose with the\linebreak map $\sigma\colon\E^{e\sqcup e}\to\Ecof$, it is immediate that $(1,0)^\ast \sigma$ and $\overline{\rho}\sigma$ are (canonically) isomorphic, as they both compute the coproduct of $(a,c)\in\E^{e\sqcup e}$. Thus if $\sigma$ is a homotopy equivalence on K-theory, $(1,0)^\ast$ and $\overline\rho$ are still homotopic. But by our assumption (1),~$\sigma$ is a section of the homotopy equivalence $\rho$, so it too is a homotopy equivalence.

Statement (2) then follows immediately by precomposing these two homotopic maps by any cofibration morphism $\Xi\colon\D\to\Ecof$, which yields
\begin{equation*}
K(\overline{\rho}\, \Xi)\cong K(S\sqcup Q)\simeq K(T)=K((1,0)^\ast \Xi)
\end{equation*}
\end{proof}

This new reformulation of the additivity theorem produces a proof that differs greatly from~\cite{CisNee08} and~\cite{Gar05}, the latter of which includes a gap which seems irreparable, see~\cite[\S6.3]{MurRap17}. We will now prove Theorem~\ref{thm:additivities}(2) in the remainder of this section, following a strategy given by Grayson in~\cite{Gra11}. Grayson's paper proves the (classical) additivity theorem for Waldhausen K-theory using an explicit combinatorial homotopy. We are able to take this diagram-flavored argument and make it homotopy-coherent enough for application to derivator K-theory.

Let $Y$ be a simplicial set. Without any loss of generality, we may\linebreak extend $Y\colon\Delta^\text{op}\to\mathbf{Sets}$ to $Y\colon\Ord^\text{op}\to\mathbf{Sets}$, where $\Ord$ is the category of\linebreak (nonempty) finite totally ordered sets with order-preserving maps. For $A\in\Ord$, we let $Y(A):=Y([n])$, where~$[n]$ is the unique element of $\Delta\subset\Ord$ isomorphic to~$A$. We do this in order to introduce a binary operation $\ast$ on $\Ord$, which otherwise would cause us problems. We let $A\ast B$ be concatenation, that is,
\begin{equation*}
A\ast B:=(\{0\}\times A)\cup(\{1\}\times B)\subset [1]\times(A\cup B)
\end{equation*}
with lexicographical ordering. This results in each element of $A$ being smaller than each element of $B$, but within $A$ and $B$ the ordering does not change.

\begin{defn}
Let $Y$ be a simplicial set (on $\Ord$). The \emph{two-fold edge-wise subdivision} $\operatorname{sub}_2\!Y$ of $Y$ is the simplicial set defined by $\operatorname{sub}_2\!Y(A):=Y(A\ast A)$.
\end{defn}
There is a natural homeomorphism $|\operatorname{sub}_2\!Y|\to|Y|$ defined in~\cite[\S4]{Gra89} whose construction we do not recall here. The important thing to note is that we do not change the homotopy type (or even homeomorphism type) of our simplicial set by subdividing.

Now we can begin to bring derivators back into the conversation. Let $\Phi,\Psi\colon\D\to\E$ be two strict cocontinuous morphisms of derivators. These induce morphisms of simplicial categories $\Sdot \Phi,\Sdot \Psi\colon\Sdot\D\to\Sdot\E$. We define a new map of simplicial categories $\nabla_{\Phi,\Psi}\colon\operatorname{sub}_2\!\Sdot \D\to\Sdot\E$ in the following way.

The totally ordered set $A\ast A$ has two full subcategories $i_0,i_1\colon A\to A\ast A$, given by~$a\mapsto(0,a),(1,a)$ respectively. These extend to functors on the arrow categories\linebreak $\Ar(A)\to\Ar(A\ast A)$, and give $i_0^\ast,i_1^\ast\colon\D(\Ar(A\ast A))\to\D(\Ar(A))$. Since restriction morphisms are strict and cocontinuous, $i_0^\ast$ and $i_1^\ast$ define morphisms of simplicial categories $\Sdot\D(A\ast A)\to\Sdot\D(A)$, where we adopt the notation $\Sdot\D(A)=\Sn\D$ for the unique $[n]$ such that $A\cong [n]$.

Consider an object $X\in\operatorname{sub}_2\!\Sdot\D(A)=\Sdot\D(A\ast A)$. Then let
\begin{equation*}
\nabla_{\Phi,\Psi}(X):=\Phi(i_0^\ast(X))\sqcup \Psi(i_1^\ast(X))
\end{equation*}
This indeed lands in $\Sdot\E(A)$ as the coproduct of any two cocartesian squares is again cocartesian. We have, in essence, doubled $\Sdot\D$ and applied $\Phi$ to the first half and $\Psi$ to the second, then taken the coproduct of the results.

Unfortunately, the described map does not exist on the level of simplicial categories. While the functors $\Phi i_0^\ast$ and $\Psi i_1^\ast$ are still strict, taking the coproduct is not a strict operation. Therefore we will need to strictify this last map, as in Proposition~\ref{prop:strictification}, so we do not honestly get a map of simplicial sets with codomain $\Sdot\E$. But on K-theory the map we want does exist, given by the zig-zag
\begin{equation}\label{eq:strictifiedcoproduct}
\vcenter{\xymatrix@C=1em{
\Omega|\operatorname{sub}_2\!N_\bullet i\Sdot\D|\ar[rrrr]^-{K(\Phi i_0^\ast)\times K(\Psi i_1^\ast)}&&&& K(\E)\times K(\E)\cong K(\E^{e\sqcup e})&\ar[l]_-{\sim} K\left(\widetilde{\E^{e\sqcup e}}\right)\ar[r]^-{\widetilde\sqcup}& K(\E)
}}
\end{equation}
where $\widetilde{\E^{e\sqcup e}}$ is the prederivator constructed in Proposition~\ref{prop:strictification} to strictify the coproduct map $\sqcup=(1,1)^\ast i_{\ul,!} i_!:\E^{e\sqcup e}\to \E$.

Now, we need to construct a cylinder object for $\Sdot\D$ that will allow us to use $\nabla_{\Phi,\Psi}$ as a replacement for $\Phi\sqcup \Psi$. We need three definitions to get us there.

\begin{defn}[Definition~1.2, \cite{Gra11}]
For $A,B\in\Ord$, let $A\ltimes B$ be the set $A\times B$ with lexicographic ordering. That is, $(a,b)\leq (a',b')$ if and only if $a<a'$ or $a=a'$ and $b\leq b'$. Note that the map $A\ltimes B\to A$ is order-preserving, hence a morphism in $\Ord$, but the other `projection' $A\ltimes B\to B$ is generally not.
\end{defn}

\begin{defn}[Definition~1.3, \cite{Gra11}]
Given two maps $\phi\colon A\to C$ and $s\colon B\to C$ in $\Ord$, define $\phi^{-1}(s)\in\Ord$ to be the subset of $A\ltimes B$ given by $\{(a,b):\phi(a)=s(b)$\}.
\end{defn}

\begin{defn}[Definition~1.4, \cite{Gra11}]
Let $s\colon [2]\to[1]$ be the morphism defined by $s(0)=0$ and $s(1)=s(2)=1$. For any simplicial set $Y$, define a new simplicial set $IY$ by $IY(A):=\{(\phi,y):\phi\colon A\to[1],y\in Y(\phi^{-1}(s))\}$. The definition of $IY$ on morphisms in $\Ord$ extends by naturality.
\end{defn}

\begin{remark}\label{rk:usefulcylinder}
To see how $IY$ is a useful object, notice that
\begin{equation*}
\phi^{-1}(s)=\phi^{-1}(0)\ast\phi^{-1}(1)\ast\phi^{-1}(1)
\end{equation*}
Therefore the choice $\phi=0$ gives $\phi^{-1}(s)=A$, and the choice $\phi=1$\linebreak gives $\phi^{-1}(s)=A\ast A$. The simplicial subset of $IY$ at $\phi=0$ is isomorphic to~$Y$, and the simplicial subset of $IY$ at $\phi=1$ is isomorphic to $\operatorname{sub}_2\!Y$. Any other morphism $\phi\colon A\to [1]$ gives a totally ordered set $\phi^{-1}(s)$ interpolating between these two endpoints.
\end{remark}

\begin{lemma}[Lemma~1.6, \cite{Gra11}]\label{lemma:cylinder}
There is a homeomorphism $|IY|\to |\Delta^1|\times|Y|$.
\end{lemma}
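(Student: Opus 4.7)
The plan is to construct an explicit natural homeomorphism $|IY|\to|\Delta^1|\times|Y|$ by first reducing to the case of representable simplicial sets and then identifying $|I\Delta^n|$ with the prism $|\Delta^1|\times|\Delta^n|$ directly.

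First I would verify that the endofunctor $Y\mapsto IY$ on simplicial sets preserves colimits. Unpacking the definition, $IY(A)=\coprod_{\phi\colon A\to[1]}Y(\phi^{-1}(s))$ is a coproduct of evaluations of $Y$ at fixed objects of $\Ord$, each of which depends colimit-linearly on $Y$. Since geometric realization commutes with colimits and with finite products (working in compactly generated Hausdorff spaces), and any simplicial set is canonically a colimit of its category of simplices $Y=\colim_{\Delta^n\to Y}\Delta^n$, it suffices to produce a natural homeomorphism $|I\Delta^n|\cong|\Delta^1|\times|\Delta^n|$ for each $n\geq 0$.

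For $Y=\Delta^n$ the simplicial set $I\Delta^n$ carries an obvious projection to $\Delta^1$ sending $(\phi,\alpha)\mapsto\phi$, whose fiber over the vertex $0\in\Delta^1$ is canonically $\Delta^n$ (since $\phi^{-1}(s)=A$ when $\phi$ is constantly $0$) and whose fiber over the vertex $1\in\Delta^1$ is canonically $\operatorname{sub}_2\Delta^n$ (since $\phi^{-1}(s)=A\ast A$ when $\phi$ is constantly $1$), as explained in Remark~\ref{rk:usefulcylinder}. Combined with the natural homeomorphism $|\operatorname{sub}_2\Delta^n|\cong|\Delta^n|$ from \cite[\S 4]{Gra89}, both endpoints of the target cylinder are canonically identified with $|\Delta^n|$. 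I would complete the argument by exhibiting $I\Delta^n$ as a specific piecewise-linear triangulation of the prism $|\Delta^1|\times|\Delta^n|$ whose restriction to the two endpoints realises the identifications above and whose top-dimensional cells tile the prism without overlap, interpolating between the $Y$-triangulation at $\phi=0$ and the $\operatorname{sub}_2 Y$-triangulation at $\phi=1$ via the doubled-half structure of $\phi^{-1}(s)=\phi^{-1}(0)\ast\phi^{-1}(1)\ast\phi^{-1}(1)$.

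The main obstacle is the last step: $I\Delta^n$ is strictly richer than the standard simplicial structure on $\Delta^1\times\Delta^n$ (in particular, it contains extra vertices and edges coming from the doubled half of $\phi^{-1}(s)$), so the sought homeomorphism must be realised at the level of piecewise-linear subdivisions rather than as a simplicial isomorphism. A clean way to handle this is by induction on $n$, using that $\Delta^n$ is the pushout of $\Delta^{n-1}$ along its boundary and that $I$ preserves such pushouts, with immediate base case $I\Delta^0\cong\Delta^1$. Once the homeomorphism is established natural in $n$, the colimit reduction from the first step upgrades it to the general statement $|IY|\cong|\Delta^1|\times|Y|$.
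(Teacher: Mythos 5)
The paper does not prove this lemma: it cites Grayson's Lemma~1.6 from \cite{Gra11} and notes that nothing in its proof changes in the derivator setting. Grayson's argument constructs the homeomorphism $|IY| \to |\Delta^1| \times |Y|$ (and its inverse) directly on geometric realizations via explicit formulas in barycentric coordinates. You instead reduce to representables by a colimit argument and then aim to triangulate the prism. The reduction itself is sound: $IY(A) = \coprod_{\phi\colon A\to[1]} Y(\phi^{-1}(s))$ is levelwise a finite coproduct of evaluations of $Y$, so $I$ preserves colimits, and the claim would indeed follow from a homeomorphism $|I\Delta^n| \cong |\Delta^1| \times |\Delta^n|$ natural in all of $\Delta$ (degeneracies included, a point worth flagging since the colimit runs over the full simplex category $\Delta/Y$). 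Your observations about the fibers over $\phi=0$ and $\phi=1$ are also correct.

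The problem is that you never actually produce the homeomorphism for $\Delta^n$: you name this step ``the main obstacle'' and then propose to handle it ``by induction on $n$, using that $\Delta^n$ is the pushout of $\Delta^{n-1}$ along its boundary and that $I$ preserves such pushouts.'' That decomposition does not exist---$\Delta^n$ is not a pushout of $\Delta^{n-1}$'s, having a unique nondegenerate top cell not built from lower-dimensional representables---so the induction cannot get started. Even replacing it with the skeletal filtration does not help: $I\Delta^n$ has several nondegenerate top-dimensional cells not coming from $I\partial\Delta^n$ (already three triangles for $n=1$, while the standard $\Delta^1\times\Delta^1$ has two), so one cannot simply attach one cell and extend; showing that those cells tile the prism compatibly with the boundary identifications is precisely the content of the lemma. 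To close the gap you would need to give the subdivision explicitly---for instance, send the vertex $(\phi{=}0,\, i)$ of $I\Delta^n$ to $(0, e_i)$ and the vertex $(\phi{=}1,\, (i\le j))$ to $(1, \tfrac{1}{2}(e_i + e_j))$ in $|\Delta^1|\times|\Delta^n|$, and verify that the affine simplices spanned by the nondegenerate simplices of $I\Delta^n$ triangulate the prism and are natural in $[n]$---or else reproduce Grayson's explicit map. As written, the central step is a genuine gap.
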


We do not include the proof because nothing is changed in the context of derivators. This shows that $IY$ is indeed a cylinder object for $Y$, so we may prove the main proposition.

\begin{prop}\label{prop:homotopy1}
Let $\Xi\colon\D\to\Ecof$ be a cofibration morphism of derivators, and let~$S,T,Q\colon\D\to\E$ be the corresponding morphisms of derivators. Then there is a map of simplicial categories $\Theta\colon I\Sdot\D\to\Sdot\E$ such that $\Theta$ agrees with $T$ on the simplicial subcategory where $\phi=0$ and $\Theta$ agrees with $\nabla_{Q,S}$ on the simplicial subcategory where $\phi=1$.
\end{prop}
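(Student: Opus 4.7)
The plan is to construct $\Theta$ using the decomposition $\phi^{-1}(s) = A \ast \phi^{-1}(1)$ that arises from the identification $A = \phi^{-1}(0) \ast \phi^{-1}(1)$. Any $X \in \Sdot\D(\phi^{-1}(s))$ restricts along the two summand inclusions $\iota_A\colon A \hookrightarrow \phi^{-1}(s)$ and $\iota_{(2)}\colon \phi^{-1}(1) \hookrightarrow \phi^{-1}(s)$ to give $X|_A \in \Sdot\D(A)$ and $X|_{(2)} \in \Sdot\D(\phi^{-1}(1))$. Crucially, $X|_{(2)}$ encodes the flag of cofibers of $X$ relative to the $|A|$-th flag entry, which is exactly the cofiber-extraction behavior exhibited by $i_1^\ast$ in the $\nabla_{Q,S}$ construction.

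I would define $\Theta(\phi, X) \in \Sdot\E(A)$ position-by-position: at $a \in A$ set $\Theta_a = T(X|_A)_a$ when $\phi(a) = 0$, and $\Theta_a = Q(X|_A)_a \sqcup S(X|_{(2)})^+_a$ when $\phi(a) = 1$, where $S(X|_{(2)})^+$ denotes the extension by zero of $S(X|_{(2)})$ along the cosieve $\phi^{-1}(1) \hookrightarrow A$ (Proposition~\ref{prop:extensionbyzero}). The transition arrows are induced by $\Xi$, with any cross-component from a $T$-entry into an $S$-entry taken as the zero map, which is canonical because $S$ vanishes at the $\phi^{-1}(0)$-positions. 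To realize this as a bona fide morphism of prederivators, one writes it as a composite of restrictions, extensions by zero, and coproduct---all cocontinuous---so that Proposition~\ref{prop:strictification} and Corollary~\ref{cor:strictification} produce a well-defined map on K-theory, exactly as in Equation~\ref{eq:strictifiedcoproduct} for $\nabla_{\Phi,\Psi}$.

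The boundary conditions then fall out by direct case analysis. At $\phi = 0$, $\phi^{-1}(1) = \emptyset$, so the $Q \sqcup S$-piece disappears and $\Theta(0, X) = T(X)$; at $\phi = 1$, $\phi^{-1}(0) = \emptyset$ and $A = \phi^{-1}(1)$, so the inclusions $\iota_A$ and $\iota_{(2)}$ become the $i_0$ and $i_1$ of $\nabla_{Q,S}$, giving $\Theta(1, X) = Q(i_0^\ast X) \sqcup S(i_1^\ast X) = \nabla_{Q,S}(X)$.

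The main obstacle I anticipate is establishing simplicial naturality of $\Theta$ in $A$. For $\psi\colon B \to A$ in $\Ord$, the decomposition $A = \phi^{-1}(0) \ast \phi^{-1}(1)$ rearranges under pullback because $(\phi\psi)^{-1}$ may place elements differently relative to $\psi$, so the inclusions $\iota_A$ and $\iota_{(2)}$ and their zero-extensions transform non-trivially. The verification reduces to a routine but bookkeeping-intensive diagram chase showing that these restrictions, extensions by zero, and the coproduct operation all commute with $\psi^\ast$, each piece being a cocontinuous morphism with the naturality built into the 2-functoriality of $\D$ and $\E$. A secondary issue, that coproducts are not strict, is handled uniformly by Proposition~\ref{prop:strictification} as done for $\nabla_{\Phi,\Psi}$.
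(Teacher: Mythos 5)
Your pointwise formula has a genuine gap, and it is not the ``routine bookkeeping'' you flag at the end: the proposed $\Theta$ is not simplicially natural, and even setting that aside, the construction cannot be realized coherently.

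\textbf{Failure of simplicial naturality.} Take $A=[2]$, $\phi(0)=\phi(1)=0$, $\phi(2)=1$, so $\phi^{-1}(s)\cong[3]$ with flag entries $X_1,X_2,X_3$. Your formula gives $\Theta_A(\phi,X)\in\mathbf{S}_2\E$ with flag $T(X_1)\subset Q(X_2)$ (the $S$-summand vanishes since $\phi^{-1}(1)$ is a singleton). Now apply $\psi=d_0\colon[1]\to[2]$. The upper route computes $\Theta_B(\phi\psi,(\psi')^\ast X)$ with $\phi\psi(0)=0$, $\phi\psi(1)=1$ and flag entry $Q(X_2/X_1)$. The lower route computes the cofiber of $T(X_1)\to Q(X_2)$, where the map is $\beta_{X_1}$ followed by $Q(X_1)\to Q(X_2)$. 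In a stable $\E$ that cofiber sits in a triangle $\Sigma S(X_1)\to\operatorname{cofib}(T(X_1)\to Q(X_2))\to Q(X_2/X_1)$, so it is not $Q(X_2/X_1)$ in general (e.g.\ $S(a)=a\to a\oplus a=T(a)\to a=Q(a)$, $X_1\to X_2$ a split mono, already gives a non-isomorphic answer in $D(R\text{-}\mathrm{Mod})$). So the square in \eqref{dia:simplicialsquare} does not commute for your $\Theta$.

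\textbf{No coherent realization.} Independently of naturality, the pointwise branching on $\phi(a)\in\{0,1\}$ and the instruction to ``take any cross-component from a $T$-entry into an $S$-entry to be the zero map'' are exactly the kind of constructions the paper warns cannot be done coherently in a derivator --- see Remark~\ref{rk:strongproblem}, which observes there is no notion of a coherent zero map and no way to insert a zero into a coherent diagram functorially. Coherent $\Ar(A)$-shaped diagrams cannot be assembled entry-by-entry with different recipes at different positions; they must be produced uniformly as pullbacks or Kan extensions of coherent diagrams of simpler shape. Also, your invocation of Proposition~\ref{prop:extensionbyzero} for ``$S(X|_{(2)})^+$'' is off the mark: extension by zero along the cosieve $\Ar(\phi^{-1}(1))\hookrightarrow\Ar(A)$ produces a diagram whose fully faithful squares are not cocartesian, so it does not land in $\Sdot\E(A)$; what you actually want is the iterated degeneracy $s_0$, a different map.

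The paper's proof avoids both problems by \emph{not} branching. It defines a single cocontinuous morphism $P\colon\D^{[1]}\to\E$ (push out $S(a)\to T(a)$ along $S(a)\to S(b)$ and extract the corner) and a strict morphism $Z=\zeta^\ast$ coming from a genuine functor $\zeta\colon\Ar(A)\times[1]\to\Ar(\phi^{-1}(s))$ encoding $d\Rightarrow e$, then sets $\Theta=(\Sdot P)\circ Z$. The boundary identifications are not built in by hand but are consequences of the fact that $P(\text{iso})\cong T$ and $P(\text{zero map})\cong Q\sqcup S$, and it is precisely the $Z_A(\phi,X)$ arrow that is an iso when $\phi=0$ and a zero map when $\phi=1$. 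Simplicial naturality of $Z$ is then immediate from strict 2-functoriality of $\D$, since $Z$ is literally a pullback along a map in $\Dirf$. This uniform-pushout insight is what your argument lacks.
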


\begin{remark}
Just as $\nabla_{Q,S}$ is not an honest morphism of simplicial categories, $\Theta$ will not be well-defined per se but will induce a map on K-theory via a zigzag coming from strictification. We will continue to abuse notation in this fashion.
\end{remark}

\begin{proof}
We will construct $\Theta$ in two steps.

First, we define a morphism $P\colon \D^{[1]}\to\E$. For a coherent morphism\linebreak $(f\colon a\to b)\in\D^{[1]}$, we may apply $\Xi^{[1]}$ to obtain an object in $\Ecof^{[1]}\subset \E^{[1]\times\square}$. Specifically, its underlying diagram takes the form (where we do not label every arrow)
\begin{equation*}
\vcenter{\xymatrix@R=1em@C=1em{
S(a)\ar[dd]_{S(f)}\ar[dr]\ar[rr]^{\alpha_{a}}&&T(a)\ar[dr]^{\beta_{a}}\ar'[d][dd]& \\
&0\ar[rr]\ar[dd]&{}&Q(a)\ar[dd]^{Q(f)} \\
S(b)\ar[dr]\ar'[r][rr]&&T(b)\ar[dr]& \\
&0\ar[rr]&&Q(b)
}}
\end{equation*}

We may consider the functor $\ul\to [1]\times\square$ by the inclusion into the upper-left corner of the back face of the cube given above. Restriction along this functor gives the coherent diagram in $\E^\ul$
\begin{equation}\label{dia:Ppushout}
\vcenter{\xymatrix{
S(a)\ar[r]^-{\alpha_a}\ar[d]_-{S(f)}&T(a)\\
S(b)
}}
\end{equation}
We may then apply $(1,1)^\ast i_{\ul,!}$ to first take the pushout of Diagram~\ref{dia:Ppushout} and then restrict to the new object. The composition is a cocontinuous morphism which we denote $P\colon \D^{[1]}\to\E$.

We point out two special cases. If $f \in\D^{[1]}$ is a coherent isomorphism,\linebreak then $P(f)\cong T(a)$, as pushing out along an isomorphism is still an isomorphism. Second, if $f$ is a zero map, then there is a natural isomorphism $P(f)\cong Q(a)\sqcup S(b)$. This arises from a factorization of $f$ as follows
\begin{equation*}
\vcenter{\xymatrix@C=1em@R=1em{
S(a)\ar[r]\ar[d]\ar@(l,l)[dd]_{S(f)}&T(a)\\
0\ar[d]&\\
S(b)&
}}
\end{equation*}

If we take pushouts one square at a time, we first obtain an object isomorphic to~$Q(a)$ and second compute the pushout of $Q(a)$ with $S(b)$:
\begin{equation*}
\vcenter{\xymatrix@C=1em@R=1em{
S(a)\ar[r]\ar[d]&T(a)\\
0\ar[d]&\\
S(b)&
}}
\quad\mapsto\quad
\vcenter{\xymatrix@C=1em@R=1em{
S(a)\ar[r]\ar[d]&T(a)\ar[d]\\
0\ar[d]\ar[r]&Q(a)\\
S(b)&
}}
\quad\mapsto\quad
\vcenter{\xymatrix@C=1em@R=1em{
S(a)\ar[r]\ar[d]&T(a)\ar[d]\\
0\ar[d]\ar[r]&Q(a)\ar[d]\\
S(b)\ar[r]&S(b)\sqcup Q(a)
}}
\end{equation*}
But the composition of two pushouts is again a pushout, which implies\linebreak that $S(b)\sqcup Q(a)$ is the pushout of the total upper-right corner, which is exactly Diagram~\ref{dia:Ppushout}. Thus we conclude $P(f)\cong S(b)\sqcup Q(a)$.

\begin{remark}\label{rk:strongproblem}
The phrase `coherent isomorphism' is justified in any (not strong) left derivator $\D$: an object $f\in \D([1])$ has underlying diagram an isomorphism if and only if the cone $C(f)\cong 0\in \D(e)$. As another option, $f\in \D([1])$ is a coherent isomorphism if and only if the counit of the $(0_!,0^\ast)$ adjunction is an isomorphism by \cite[Proposition~3.12]{Gro13}. This is actually true in any prederivator, as the\linebreak functor $0_!\colon \D(e)\to \D([1])$ is canonically isomorphic to $\pi_{[1]}^\ast$, where $\pi_{[1]}\colon [1]\to e$ is the projection. 

Unfortunately, we do not have a way to make sense of a `coherent zero map', \ie an object in $\D([1])$ whose underlying diagram factors through the 0 in $\D(e)$. Just as there is no morphism $\D([1])\sqcup_{\D(e)}\D([1])\to\D([2])$ that would `coherently compose' two coherent maps with the appropriate domain and codomain, there is no morphism~$\D([1])\to\D([2])$ which `inserts' the zero object.

Instead, what we do is take the underlying diagram of Diagram~\ref{dia:Ppushout}, and notice that the vertical map factors through zero. We may then lift the `tall $\ul$' to a coherent object as this category is still finite free. The pushouts maybe taken one step at a time, similar to the construction at Equation~\ref{eq:coprod}, to give us a coherent object of~$\D([1]\times[2])$, whose restriction to the outer square gives us the original pushout of Diagram~\ref{dia:Ppushout}.

The step of passing to the incoherent diagram, inserting zero, and lifting back to a slightly-larger coherent diagram is not functorial, but it does allow us to identify (up to isomorphism) the pushout of our original diagram. It would be helpful to know if there is an appropriate notion of `coherent zero map', in which case we could eliminate the assumption of strongness from our left pointed derivators. It would be equally interesting to know if the notion of `coherent zero map' does not exist for non-strong derivators, thereby validating all the axioms on the domain of derivator K-theory. We leave this for future work.
\end{remark}

Now, we construct the second step of $\Theta$. Recall the definition of $s\colon [2]\to [1]$. Now we define two sections of $s$: first,  $d\colon[1]\to[2]$ with $d(0)=0$, $d(1)=1$; second, $e\colon [1]\to[2]$ with $e(0)=0$, $e(1)=2$. For any $a\in A$, $(d\phi(a),a)$ and $(e\phi(a),a)$ are both elements of $\phi^{-1}(s)$ as $se=sd=\id_{[1]}$. This gives two inclusions from $A$ into $\phi^{-1}(s)$, to which we also name $d$ and $e$ (as the other functors will not be returning). There is also a unique natural transformation $\zeta\colon d\Rightarrow e$ as $d\phi(a) \leq e\phi(a)$ for any~$a\in A$.

We may first extend $d,e:\Ar(A)\to\Ar(\phi^{-1}(s))$ by naturality, and we still have the natural transformation $\zeta\colon d\Rightarrow e$. We can better view $\zeta$ as a\linebreak functor $\zeta\colon \Ar(A)\times[1]\to\Ar(\phi^{-1}(s))$, where (in particular) the original data of the natural transformation is retained by the maps in the $[1]$-dimension
\begin{equation*}
\zeta\Big(((a\to b),0)\to((a\to b),1)\Big)=\,d(a\to b)\xrightarrow{\zeta_{(a\to b)}} e(a\to b).
\end{equation*}

Therefore we obtain a strict cocontinuous morphism
\begin{equation*}
\zeta^\ast\colon \D(\Ar(\phi^{-1}(s)))\to\D(\Ar(A)\times[1])
\end{equation*}
As we vary $\phi\colon A\to[1]$, we obtain a map out of $I\Sdot\D(A)$, and by cocontinuity we can restrict the codomain to $\Sdot\D^{[1]}(A)$. This tells us how to extend the various $\zeta^\ast$ to a map $Z\colon I\Sdot\D\to\Sdot\D^{[1]}$ (read: capital $\zeta$).

We now need to check that $Z$ is a map of simplicial categories. Suppose we have a map $\psi\colon B\to A$ in $\Ord$. Then we need to check that the following square commutes:
\begin{equation}\label{dia:simplicialsquare}
\vcenter{\xymatrix@R=2em@C=2em{
I\Sdot\D(A)\ar[r]^-{\psi_1^\ast}\ar[d]_-{Z_A}&I\Sdot\D(B)\ar[d]^-{Z_B}\\
\Sdot\D^{[1]}(A)\ar[r]_-{\psi_2^\ast}&\Sdot\D^{[1]}(B)
}}
\end{equation}
where we use $\psi_1^\ast,\psi_2^\ast$ to denote the maps induced by $\psi$ on the two different simplicial categories $I\Sdot\D$ and $\Sdot\D^{[1]}$.

Let $(\phi\colon A\to [1],X\in\Sdot\D(\phi^{-1}(s)))\in I\Sdot\D(A)$. Then taking the upper composition of Diagram~\ref{dia:simplicialsquare} we first get the object
\begin{equation*}
\psi_1^\ast(\phi,X)=(\phi\psi, (\psi')^\ast X\in\Sdot\D((\phi\psi)^{-1}(s))
\end{equation*}
where $\psi'$ is the natural map $(\phi\psi)^{-1}(s)\to\phi^{-1}(s)$ induced by $\psi$, $(n,b)\mapsto (n,\psi(b))$. If we apply $Z_B$ to $(\phi\psi,(\psi')^\ast X)$, we obtain the coherent object in $\Sdot\D^{[1]}(B)$
\begin{equation*}
\xymatrix@C=2em{
d_B^\ast(\psi')^\ast X\ar[r]^{\zeta_B^\ast}&e_B^\ast(\psi')^\ast X
}
\end{equation*}
where $d_B$ and $e_B$ are the specific instances of $d,e$ in this case.

If we traverse Diagram~\ref{dia:simplicialsquare} using the lower composition, we first take $Z_A(\phi,X)$ to obtain $\zeta_A^\ast\colon d_A^\ast X\to e_A^\ast X$.  Then applying $\psi_2^\ast$ we obtain
\begin{equation*}
\xymatrix@C=3em{
\psi^\ast d_A^\ast X\ar[r]^{\psi^\ast\zeta_A^\ast}& \psi^\ast e_A^\ast X.
}
\end{equation*}
But by strict 2-functorality, these two coherent maps are equal on the nose, as they are just pullbacks induced by maps in $\Dirf$. Therefore $Z$ is indeed a morphism of simplicial categories.

This finishes the definition of $\Theta=(\Sdot P) Z\colon I\Sdot\D\to \Sdot\D^{[1]}\to \Sdot\E$. We now need to check that $\Theta$ in fact interpolates between $T$ and $\nabla_{Q,S}$. Suppose we take some $A\in\Ord$, and let $(\phi,X)\in I\Sdot\D(A)$.  

If $\phi=0$ is the zero map, then $\phi^{-1}(s)=A$ by Remark~\ref{rk:usefulcylinder}. In this case, $\zeta\colon d\Rightarrow e$ is the identity natural transformation, as $d\phi(a)=e\phi(a)$ always. So for an element $(0,X\in\Sdot\D(A))\in I\Sdot\D(A)$, we see that $Z_A(0,X)$ is nothing else but the constant map in the $[1]$-direction. Thus $Z_A(0,X)=\id_X$. This is an isomorphism, one of the special cases we noted following Diagram~\ref{dia:Ppushout}, and we conclude that $\Sdot P(Z_A(0,X))$ is naturally isomorphic to $\Sdot T(X)$.

If $\phi=1$, then $\phi^{-1}(s)=A\ast A$ by the same remark. We now consider an object of the form $(1,X\in\Sdot\D(A\ast A))\in I\Sdot\D(A)$. The resulting $Z_A(1,X)$ is an element of $\Sdot\D^{[1]}(A)$, which is a subcategory of $\D(\Ar(A)\times[1])$. Incoherently, if we write $X=(x\to y)$, as $X\in\D(\Ar(A\ast A))$, then $Z_A(1,X)$ looks like
\begin{equation*}
\vcenter{\xymatrix{
d^\ast x\ar[r]^-{\zeta^\ast_x}\ar[d]&e^\ast x\ar[d]\\
d^\ast y\ar[r]_-{\zeta^\ast_y}& e^\ast y
}}
\end{equation*}
where the vertical maps are the elements of $\Ar(A\ast A)$. But since $\phi=1$, we know that $e^\ast x \geq d^\ast y$. Therefore the map above factors as
\begin{equation*}
\xymatrix{
d^\ast x\ar[r]\ar[d]&d^\ast y\ar[r]\ar[d]&e^\ast x\ar[d]\\
d^\ast y\ar[r]&d^\ast y\ar[r]& e^\ast y
}
\end{equation*}
Therefore our overall (coherent) map $Z_A(1,X)$ factors through the object\linebreak $(d^\ast y\to d^\ast y)\in\Sdot\D(A)$. But this is a zero object by assumption, as it corresponds to an object of the form $(i,i)\in\Ar([n])$ (where $[n]\cong A\ast A$), so $Z_A(1,X)$ is a zero map. This was our second special case, and we conclude that $\Sdot P(Z_A(1,X))$ is naturally isomorphic to $\Sdot\nabla_{Q,S}(X)$. This completes the proof.
\end{proof}

Proving the additivity of derivator K-theory is now immediate.

\begin{theorem}\label{thm:mainadditivity}
Let $\Xi\colon \D\to\Ecof$ be a cofibration morphism of derivators, and let $S,T,Q$ be the corresponding morphisms of derivators. Then $S\sqcup Q$ and $T$ induce homotopic maps $K(\D)\to K(\E)$.
\end{theorem}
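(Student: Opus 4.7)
The plan is to combine Proposition~\ref{prop:homotopy1} with the cylinder identification of Lemma~\ref{lemma:cylinder}, reinterpreting the map $\Theta\colon I\Sdot\D\to\Sdot\E$ as a homotopy between its two endpoints at the level of K-theory. First I would apply the wide isomorphism subcategory functor $i$, the nerve $N_\bullet$, and the diagonal geometric realization to $\Theta$, strictifying as in Equation~\ref{eq:strictifiedcoproduct} and Proposition~\ref{prop:strictification} wherever $\Theta$ fails to be an honest simplicial map (namely on each simplicial level where a coproduct is taken). The output is a zigzag of simplicial maps specifying a well-defined morphism $|N_\bullet i I\Sdot\D|\to|N_\bullet i\Sdot\E|$ in $\cS$, and looping once yields a map $\Omega|N_\bullet i I\Sdot\D|\to K(\E)$.

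Second, Lemma~\ref{lemma:cylinder} provides a natural homeomorphism $|N_\bullet i I\Sdot\D|\cong |\Delta^1|\times|N_\bullet i\Sdot\D|$, so after looping we may reinterpret the above map as a homotopy in $\cS$ between the two maps $K(\D)\to K(\E)$ obtained by restricting $\Theta$ to the simplicial subcategories cut out by $\phi=0$ and $\phi=1$, as discussed in Remark~\ref{rk:usefulcylinder}.

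Third, I would identify those endpoints using Proposition~\ref{prop:homotopy1}. At $\phi=0$, the restriction is invertibly modification-equivalent to $\Sdot T$, so by Corollary~\ref{cor:strictification} it induces $K(T)$ on K-theory. At $\phi=1$, the restriction is invertibly modification-equivalent to $\Sdot\nabla_{Q,S}$ viewed as a map out of $\operatorname{sub}_2\Sdot\D$. Since the subdivision homeomorphism $|\operatorname{sub}_2 Y|\to|Y|$ of \cite[\S4]{Gra89} preserves homotopy type, and reading Equation~\ref{eq:strictifiedcoproduct}, the induced map on K-theory is the composite $K(\D)\xrightarrow{K(Q)\times K(S)} K(\E)\times K(\E)\xrightarrow{\widetilde\sqcup} K(\E)$, which in $\cS$ is precisely $K(S\sqcup Q)$ (equivalently $K(S)\sqcup K(Q)$ in the H-space structure of $K(\E)$). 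The two endpoints being homotopic in $\cS$ gives the desired conclusion.

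The main subtlety, rather than a genuine obstacle since Proposition~\ref{prop:homotopy1} has done the heavy lifting, is careful bookkeeping of strictifications: the coproduct morphism $\sqcup$, the diagonal map $\nabla_{Q,S}$, and $\Theta$ itself are all non-strict. What we must verify is that the invertible modifications identifying the endpoints of $\Theta$ with $\Sdot T$ and $\Sdot\nabla_{Q,S}$ can be threaded through the strictification zigzags of Proposition~\ref{prop:strictification} to produce genuinely homotopic maps in $\cS$, so that Corollary~\ref{cor:strictification} applies. Once this bookkeeping is done, the theorem follows immediately.
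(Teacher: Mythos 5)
Your first two steps match the paper: realize $\Theta$ from Proposition~\ref{prop:homotopy1} as a homotopy via the cylinder identification of Lemma~\ref{lemma:cylinder}, then identify the $\phi=0$ and $\phi=1$ endpoints with $\Sdot T$ and $\nabla_{\Sdot Q,\Sdot S}$ respectively. The gap is in your third step, where you pass from ``the subdivision homeomorphism preserves homotopy type'' directly to the claim that the $\phi=1$ endpoint, read through the identification $K(\D)\cong\Omega|\operatorname{sub}_2\!N_\bullet i\Sdot\D|$, equals the composite $K(\D)\xrightarrow{K(Q)\times K(S)}K(\E)\times K(\E)\to K(\E)$. That assertion requires knowing that the restriction maps $i_0^\ast,i_1^\ast\colon\operatorname{sub}_2\!\Sdot\D\to\Sdot\D$, after geometric realization and precomposition with the \emph{inverse} of the subdivision homeomorphism, are homotopic to the identity on $|N_\bullet i\Sdot\D|$. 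This is a genuinely nontrivial compatibility between the subdivision homeomorphism and $i_0^\ast,i_1^\ast$; it is not implied merely by the subdivision being a homeomorphism, and you neither prove nor cite it.

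The paper's proof sidesteps this question entirely, and this is the step you are missing. It applies Proposition~\ref{prop:homotopy1} a \emph{second} time, to the auxiliary cofibration morphism $\sigma(S\sqcup Q)\colon\D\to\Ecof$ built from the section $\sigma$ constructed in the proof of Theorem~\ref{thm:additivities}. That cofibration morphism has source $S$, quotient isomorphic to $Q$, and target isomorphic to $S\sqcup Q$, so the second application yields $K(S\sqcup Q)\simeq K(\nabla_{Q,S})$ through exactly the same cylinder-and-subdivision identification. Combining with $K(T)\simeq K(\nabla_{Q,S})$ from the first application gives $K(T)\simeq K(S\sqcup Q)$. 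The point of this two-application trick is that the subdivision homeomorphism appears identically on both sides of the comparison and cancels out, so one never needs to analyze what $K(\nabla_{Q,S})$ actually is as a map $K(\D)\to K(\E)$. To salvage your approach you would have to supply a proof that $|i_0^\ast|$ (and $|i_1^\ast|$) realize the subdivision homeomorphism up to homotopy; this can probably be done, but it is real additional work that the paper deliberately avoids.
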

\begin{proof}
From Lemma~\ref{lemma:cylinder} and Proposition~\ref{prop:homotopy1}, the morphism of simplicial categories~$\Theta$ above gives a homotopy between $\Sdot T$ and $\nabla_{\Sdot Q,\Sdot S}$ on $|\Sdot\D|\to|\Sdot\E|$. We now need to prove that there exists a cofibration morphism of derivators whose associated functors are $S,S\sqcup Q,Q$.

From the proof of Proposition~\ref{thm:additivities}, recall that we constructed a\linebreak functor ${\sigma\colon \E^{e\sqcup e}\to\Ecof}$ whose image is precisely what we desire. Let us precompose $\sigma$ by the morphism $(S\sqcup Q)\colon \D\to\E^{e\sqcup e}$. The target map of $\sigma(S\sqcup Q)\colon \D\to\Ecof$ is isomorphic to $S\sqcup Q$, and so $\nabla_{\Sdot Q,\Sdot S}$ and $\Sdot S\sqcup \Sdot Q$ induce homotopic maps on K-theory as well. Combining these homotopies, we see that $T$ and $S\sqcup Q$ induce homotopic maps $K(\D)\to K(\E)$.
\end{proof}
In particular, by Proposition~\ref{thm:additivities}, this proves additivity as it was conjectured by Maltsiniotis in~\cite{Mal07}.

\section{Further properties}

As additivity sits as the central property of any flavor of algebraic K-theory, we can now see what else we have obtained. First, as promised, we can perform the Eilenberg swindle for left pointed derivators with `large' domains.

\begin{prop}\label{prop:derivatorswindle}
Suppose that $\D$ is a left pointed derivator defined on $\Dia\subset\Cat$ such that the countable discrete set $\omega$ is in $\Dia$. Then $K(\D)\cong 0$.
\end{prop}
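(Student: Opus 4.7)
The plan is to execute the classical Eilenberg swindle at the level of derivator K-theory using the additivity theorem just established. Since $\omega\in\Dia$, the projection $\pi_\omega\colon\omega\to e$ gives rise to the cocontinuous morphism $F:=\pi_{\omega,!}\pi_\omega^\ast\colon\D\to\D$; by (Der1) the object $F(X)$ is naturally identified with the countable coproduct of copies of~$X$. Decomposing $\omega=\{0\}\sqcup(\omega\setminus\{0\})$ and using the shift bijection $\omega\setminus\{0\}\cong\omega$ yields, via (Der1) again, a natural isomorphism $F\cong\id_\D\sqcup F$ of cocontinuous morphisms; this is the categorical input to the swindle.

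Next I would build a cofibration morphism of derivators $\Xi\colon\D\to\Dcof$ sending $X$ to the cocartesian square
\begin{equation*}
\vcenter{\xymatrix@R=1em@C=1em{
X\ar[r]\ar[d]&F(X)\ar[d]\\
0\ar[r]&F(X)
}}
\end{equation*}
with top arrow the canonical inclusion of the $0$-th summand. Concretely, letting $\omega^\triangleright$ denote $\omega$ with a terminal object $v$ adjoined (one arrow $n\to v$ for each $n\in\omega$), left Kan extending $\pi_\omega^\ast X$ along $\omega\hookrightarrow\omega^\triangleright$ produces, by (Der4L), a coherent diagram with value $X$ at each $n\in\omega$ and $F(X)$ at $v$, whose transition arrows are the canonical coproduct inclusions. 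Restriction along the embedding $[1]\cong\{0,v\}\hookrightarrow\omega^\triangleright$ extracts the coherent arrow $X\to F(X)$, and composition with the cone construction of Diagram~\ref{dia:coneconstruction} yields $\Xi$; strictification via Proposition~\ref{prop:strictification} makes it strict cocontinuous. By inspection $S=\id_\D$ and $T=F$, while $Q\cong F$ via the shift isomorphism of the first step.

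Finally I would invoke Theorem~\ref{thm:mainadditivity} to obtain the homotopy $K(F)\simeq K(\id_\D)\sqcup K(F)$ as maps $K(\D)\to K(\D)$. Because $K(\D)=\Omega|N_\bullet i\Sdot\D|$ is a loop space, the monoid $[K(\D),K(\D)]$ is a group under the H-space operation (which agrees with coproduct of maps, by additivity itself), so cancelling $[K(F)]$ from both sides yields $[K(\id_\D)]=0$; that is, the identity of $K(\D)$ is null-homotopic and $K(\D)\simeq 0$. The principal obstacle is the construction of $\Xi$: it requires $\omega^\triangleright$ (together with the sub-inclusions $\omega,\{0,v\}\subset\omega^\triangleright$) to lie in $\Dia$, which is a mild closure hypothesis on $\Dia$ beyond merely containing $\omega$, and one that any reasonable enlargement of $\Dirf$ admitting $\omega$ will satisfy. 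A secondary subtlety is verifying that the identification $Q\cong F$ is a genuine invertible modification of cocontinuous morphisms so that the additivity relation really is cancellable in $\cS$; this reduces to tracing canonical isomorphisms and presents no substantive difficulty.
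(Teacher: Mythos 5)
Your proof is correct and follows a genuinely different route from the paper's. Both swindles are built on $F:=\pi_{\omega,!}\pi_\omega^\ast$, but the cofibration morphisms differ. The paper builds $\Xi$ from the graph $\Gamma_s$ of the successor function $s\colon\omega\to\omega$, producing the coherent arrow $\coprod X\to\coprod X$ that shifts each summand by one; the associated morphisms are $S=T=\coprod$, $Q=\id_\D$, and cancellation is applied to $K(\coprod)\simeq K(\coprod)\sqcup K(\id_\D)$. You instead build $\Xi$ from the cone $\omega^\triangleright$, producing the coherent $0$-th-summand inclusion $X\to\coprod X$; your $S=\id_\D$, $T=\coprod$, $Q\cong\coprod$, and cancellation is applied to $K(\coprod)\simeq K(\id_\D)\sqcup K(\coprod)$. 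Both are formally equivalent, but the paper's choice of $\Gamma_s$ is deliberate: by embedding $\Gamma_s$ as a cosieve into $\omega\times[1]$ and commuting the $\omega$-indexed coproduct past the cofiber construction, the paper reduces the identification of $Q$ to an $\omega$-indexed coproduct of trivially cocartesian squares (pushouts along $0\to X$ or along an identity). Your approach instead needs the coherent cofiber of the split inclusion $X\hookrightarrow\coprod X$ to be identified with the complementary summand $\coprod_{n\geq 1}X$; this is true and can be verified by an analogous $\omega\times\square$ decomposition, but it is a bit more than ``tracing canonical isomorphisms,'' since the underlying diagram functor $\D(\square)\to\Fun(\square,\D(e))$ is not fully faithful in general and the identification must be produced as an isomodification, not merely an objectwise isomorphism. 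Your concern about $\omega^\triangleright\in\Dia$ is fair but not distinctive: the paper's $\Gamma_s$, $\omega\times[1]$, and the comma categories arising in the Der4L computations impose analogous closure demands on $\Dia$ beyond containing $\omega$, so both proofs tacitly rely on the standard closure axioms for diagram $2$-categories. The cancellation step in $[K(\D),K(\D)]$ you spell out is the same one the paper uses, equally tersely.
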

\begin{proof}
If we look at the functor $\pi_\omega\colon\omega\to e$, we see that the left Kan extension $\pi_{\omega,!}$ is the colimit of this discrete set, \ie the coproduct. From an object $X\in\D(e)$, we can obtain the countable coproduct of copies of $X$ via the functor $\pi_{\omega,!}\pi_\omega^\ast\colon\D(e)\to\D(e)$, and we can promote this to a cocontinuous morphism of derivators $\coprod\colon\D\to\D$.

We now want to construct a cofibration morphism of derivators with this construction in mind. Let $s\colon \omega\to\omega$ denote the successor function, \ie $s(n)=n+1$ We build another category $\Gamma_s$ as the diagram underlying the function $s$. That is, the objects of $\Gamma_s$ are two disjoint copies of $\omega$, which we label 0 and 1, with an arrow~$n_0\to s(n)_1=(n+1)_1$. To sketch the initial segment of $\Gamma_s$:
\begin{equation*}
\vcenter{\xymatrix{
0_0\ar[dr]&1_0\ar[dr]&2_0\ar[dr]&3_0\ar[dr]&\cdots\\
0_1&1_1&2_1&3_1&4_1&\cdots
}}
\end{equation*}
This category admits a functor $p\colon\Gamma_s\to [1]$ which sends $n_i$ to $i\in[1]$. We claim that the cofibration morphism of derivators $\Xi\colon \D\to\Dcof$ induced by
\begin{equation*}
\vcenter{\xymatrix{
\D\ar[r]^-{\pi_{\Gamma_s}^\ast}&\D^{\Gamma_s}\ar[r]^-{p_!}&\D^{[1]}\ar[r]^-{i_{[1],\ast}}&\D^\ul\ar[r]^-{i_{\ul,!}}&\Dcof
}}
\end{equation*}
provides us with a null homotopy of the identity on $\D$, giving us our swindle.

We need to identify the source, target, and quotient morphisms associated to this cofibration morphism. Let $X\in\D$. To begin, $(0,0)^\ast\Xi$ is isomorphic to $0^\ast p_!\pi_{\Gamma_s}^\ast$, as the last two functors $i_{[1],\ast}$ and $i_{\ul,!}$ are fully faithful and do not change the underlying diagram of $[1]\subset\square$. Using Der4L, we can compute $0^\ast p_!\pi_{\Gamma_s}^\ast X$ via the comma category~$(p/0)$. Specifically,
\begin{equation*}
\vcenter{\xymatrix{
0^\ast p_!\pi_{\Gamma_s}^\ast X\cong \pi_{(p/0),!}\pr^\ast \pi_{\Gamma_s}^\ast X
}}
\end{equation*}
where $\pr\colon (p/0)\to \Gamma_s$ is the usual projection to the comma category as in Definition~\ref{defn:leftderivator}. The composition $\pr^\ast \pi_{\Gamma_s}^\ast$ is the pullback along $(p/0)\to \Gamma_s\to e$, which is the same thing as the pullback $\pi_{(p/0)}^\ast$ directly. To complete this computation, we need to identify the comma category.

The objects of the comma category are $n_i\in\Gamma_s$ along with a map $p(n_i)=i\to 0$. This forces $p(n_i)=0$ as there are no other maps in $[1]$. Therefore $(p/0)$ consists solely of the subcategory $\omega_0\subset\Gamma_s$. Therefore we conclude
\begin{equation*}
\vcenter{\xymatrix{
(0,0)^\ast \Xi X\cong 0^\ast p_!\pi_{\Gamma_s}^\ast X\cong \pi_{(p/0),!}\pr^\ast \pi_{\Gamma_s}^\ast X\cong \pi_{\omega_0,!}\pi_{\omega_0}^\ast X=\coprod X
}}
\end{equation*}

The computation for $(1,0)^\ast \Xi X$ is similar: it is isomorphic to $1^\ast p_!\pi_{\Gamma_s}^\ast X$. We have an isomorphism by Der4L
\begin{equation*}
\vcenter{\xymatrix{
1^\ast p_!\pi_{\Gamma_s}^\ast X\cong \pi_{(p/1),!}\pr^\ast\pi_{\Gamma_s}^\ast X\cong \pi_{(p/1),!}\pi_{(p/1)}^\ast X
}}
\end{equation*}
We now need to identify $(p/1)$ as a category. As $1\in[1]$ is the terminal object, this comma category is equal to $\Gamma_s$ itself. In order to compute the colimit of shape $\Gamma_s$, we note that it admits a reflective subcategory. Consider $\omega_1\subset\Gamma_s$ as a subcategory. Then this inclusion admits a left adjoint $\ell\colon\Gamma_s\to \omega_1$ such that $\ell(n_1)=n_1$ and $\ell(m_0)=S(m_0)=(m+1)_1$. To double check that this is actually an adjunction, we check for $m_0,n_1\in\Gamma_s$
\begin{equation*}
\vcenter{\xymatrix{
\Hom_{\Gamma_s}(m_0,n_1)=\Hom_{\omega_1}((m+1)_1,n_1)
}}
\end{equation*}
The lefthand side is nonempty if and only if $m+1=n$, which is the same for the righthand side. In the case that we are looking at the maps between $m_0$ and $n_0$ or $m_1$ and $n_2$, the hom-set equality is straightforward as both sides are empty.

By \cite[Proposition~1.18]{Gro13}, right adjoint functors preserve colimits; that is, if~$r\colon \omega_1\to \Gamma_s$ is the inclusion, we have $\pi_{\omega_1,!}r^\ast Y\cong\pi_{\Gamma_s,!} Y$ for any $Y\in\D(\Gamma_s)$. Noting finally that the composition $r^\ast\pi_{\omega_1}^\ast=\pi_{(p/1)}^\ast$, we can now complete the chain of isomorphisms to finish the computation:
\begin{equation*}
\vcenter{\xymatrix{
(1,0)^\ast \Xi X\cong 1^\ast p_!\pi_{\Gamma_s}^\ast X\cong \pi_{(p/1),!}\pr^\ast\pi_{\Gamma_s}^\ast X\cong \pi_{(p/1),!}\pi_{(p/1)}^\ast X\cong \pi_{\omega_1,!}\pi_{\omega_1}^\ast X\cong \coprod X
}}
\end{equation*}

The last step is to compute $(1,1)^\ast\Xi X$, which takes a little more work. In order to identify it, we modify the construction of the cofibration morphism $\Xi$ up to isomorphism. Instead of immediately applying $p_!\colon \D^{\Gamma_s}\to\D^{[1]}$, we note that $\Gamma_s$ is nearly~$\omega\times[1]$, if we think of this latter category as
\begin{equation*}
\vcenter{\xymatrix{
-1_0\ar[d]&0_0\ar[d]&1_0\ar[d]&2_0\ar[d]&3_0\ar[d]&\cdots\\
0_1&1_1&2_1&3_1&4_1&\cdots
}}
\end{equation*}
Let $i\colon \Gamma_s\to \omega\times[1]$ be the inclusion as indicated by the labelling in the diagram. The category $\omega\times[1]$ has the canonical projection to $[1]$ which we denote $p_{[1]}$. Then we have a factorisation $p=p_{[1]} i$, so we have an isomorphism $p_!\cong p_{[1],!}i_!$. The functor~$i$ is a cosieve, so the left Kan extension $i_!$ is extension by zero, and the morphism $p_{[1],1}$ is similar to $p_!$ in that it computes the infinite coproduct of the top row and the bottom row separately. We do not need to make this computation, but in essence we have
\begin{equation*}
\vcenter{\xymatrix{
0^\ast p_{[1],!}i_!\pi_{\Gamma_s}^\ast\cong 0\sqcup X\sqcup X\sqcup\cdots\cong\coprod X,\quad 1^\ast p_{[1],!}i_!\pi_{\Gamma_s}^\ast\cong X\sqcup X\sqcup\cdots\cong \coprod X
}}
\end{equation*}

This reorganising allows us to make explicit the $[1]$-dimension of $\Gamma_s$ and reverse the order of the infinite coproduct and the pushout. More specifically, the following diagram commutes up to isomorphism as all the morphisms involved are cocontinuous and involve distinct dimensions of the overall diagram:
\begin{equation*}
\vcenter{\xymatrix@C=5em{
\D^{\omega\times[1]}\ar[r]^-{\id_\omega^\ast\times i_{\ul,!}i_{[1],\ast}}\ar[d]_-{p_{[1],!}}&\Dcof^\omega\ar[d]^-{\coprod}\\
\D^{[1]}\ar[r]^-{i_{\ul,!}i_{[1],\ast}}&\Dcof\
}}
\end{equation*}
Therefore to compute $(1,1)^\ast \Xi X$, it suffices to compute the following:
\begin{equation*}
\vcenter{\xymatrix{
(1,1)^\ast i_{\ul,!}i_{[1],\ast}p_!\pi_{\Gamma_s}^\ast X\cong (1,1)^\ast i_{\ul,!}i_{[1],\ast}p_{[1],!}i_!\pi_{\Gamma_s}^\ast X\cong (1,1)^\ast\coprod(\id_\omega^\ast\times i_{\ul,!}i_{[1],\ast})i_!\pi_{\Gamma_s}^\ast X
}}
\end{equation*}
We can commute the coproduct morphism with $(1,1)^\ast$, so the final challenge is to understand the $(1,1$) entry of each of the cocartesian squares in $\Dcof^\omega$ that we obtain before taking the infinite coproduct. We have already made the computation of~$i_!\pi_{\Gamma_s}^\ast X\in\D^{\omega\times[1]}$ and found that it is
\begin{equation*}
\vcenter{\xymatrix{
0\ar[d]&X\ar[d]&X\ar[d]&X\ar[d]&\cdots\\
X&X&X&X&\cdots
}}
\end{equation*}
where all but the first map are (coherent) identities. The pushouts are accomplished independently of each other, which means $(\id_\omega^\ast\times i_{\ul,!}i_{[1],\ast})i_!\pi_{\Gamma_s}^\ast X$ has the form (where we rotate all our arrows from vertical to horizontal before pushing out)
\begin{equation*}
\vcenter{\xymatrix@R=1em@C=1em{
0\ar[r]\ar[d]&X\ar[d]&X\ar[r]\ar[d]&X\ar[d]&X\ar[r]\ar[d]&X\ar[d]&\cdots\\
0\ar[r]&X&0\ar[r]&0&0\ar[r]&0&\cdots
}}
\end{equation*}
So we see that $(1,1)^\ast\coprod$ applied to this diagram yields a coproduct of $X$ with countably many zero objects, so we conclude that $(1,1)^\ast\Xi X\cong X$.

By additivity, on K-theory we have $K(T)\simeq K(S)\sqcup K(Q)$. This yields for us $K(\coprod)\simeq K(\coprod)\sqcup K(\id_\D)$, so that $0\simeq K(\id_\D)$. The only way for the identity to be equal to zero is if $K(\D)$ itself is zero, completing the proof.
\end{proof}

We now proceed in the spirit of Waldhausen in~\cite{Wal85}, beginning first with the delooping of the K-theory space $K(\D)$. In order to do so, we need a construction of relative K-theory.

To begin, we note that the $\Sdot$-construction on derivators not only gives us a simplicial category, but in fact a simplicial left pointed derivator.
\begin{prop}\label{prop:simplicialderivator}
Let $j\colon [n-1]\to\Ar[n]$ denote the inclusion $i\mapsto (0,i+1)$. Then~$j^\ast\colon \Sn\D\to\D([n-1])$ is an equivalence of categories. Thus we can give $\Sn\D\subset\D^{\Ar[n]}$ the structure of a left pointed derivator via the equivalence with $\D^{[n-1]}$.
\end{prop}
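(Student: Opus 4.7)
My plan is to construct an explicit quasi-inverse $K\colon \D([n-1])\to \Sn\D$ to the restriction $j^\ast$, generalising the $n=2$ case already established in the paper (the equivalence $\D^{[1]}\cong\Dcof$). Given $X = (x_1\to\cdots\to x_n)\in\D([n-1])$, I would first prepend a zero using extension by zero along the cosieve $j_1\colon [n-1]\to [n]$, $i\mapsto i+1$, yielding $(0\to x_1\to\cdots\to x_n)\in\D([n])$. I would then identify $[n]$ with the top row $\{(0,j):0\le j\le n\}\subset\Ar[n]$ and extend row by row via a filtration $F_0\subset F_1\subset\cdots\subset F_n = \Ar[n]$, where $F_k$ consists of the first $k+1$ rows. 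Each step $F_{k-1}\hookrightarrow F_k$ would be split into two: first add a zero at the new diagonal element $(k,k)$ (the inclusion $F_{k-1}\hookrightarrow F_{k-1}\cup\{(k,k)\}$ is a sieve, so $u_\ast$ is extension by zero by the left-pointedness of $\D$), then fill in the remaining entries $(k,k+1),\ldots,(k,n)$ via the left Kan extension along the resulting sieve inclusion, which by Der4L computes the expected iterated pushouts.

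Next I would verify that $K(X)\in\Sn\D$. The diagonal zeros are there by construction. For the cocartesian-square condition, it suffices by~\cite[Proposition~3.13]{Gro13} (the reduction already invoked in the definition of $\Sn\D$) to check fully faithful inclusions $\iota\colon\square\to\Ar[n]$ with $\iota(0,1)=(i,i)$, and these squares arise directly from the pushout steps of the construction; any other cocartesian square follows from the standard pasting lemma for pushouts in a left derivator.

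The quasi-inverse identities then follow from fully-faithfulness results. The unit $\id_{\D([n-1])}\Rightarrow j^\ast K$ is an isomorphism because each Kan extension used in the construction (extension by zero along a sieve or cosieve, or left Kan extension along a fully faithful functor) is itself fully faithful by Proposition~\ref{prop:extensionbyzero} and~\cite[Proposition~1.20]{Gro13}, so reading off the top row recovers the original $X$. The counit $Kj^\ast\Rightarrow\id_{\Sn\D}$ is an isomorphism because any $Y\in\Sn\D$ is determined up to canonical isomorphism by its top row: the zeros on the diagonal combined with the cocartesian-square requirement force every other entry to be an iterated pushout of the top-row data, which is exactly what $K$ produces. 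The main obstacle is the bookkeeping of the comma categories in the successive Kan extensions and confirming that the pointwise formulas at each stage agree with the desired iterated pushouts; once the filtration is fixed, this is notationally heavy but essentially routine, and no new derivator axioms beyond Der4L and the left-pointedness are needed.
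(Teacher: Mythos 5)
Your strategy — construct an explicit quasi-inverse by first prepending a zero via extension-by-zero along a cosieve, then extending to $\Ar[n]$ by Kan extensions — is the same overall strategy as the paper's, and it is correct. But the way you decompose the final Kan extension into $\Ar[n]$ is genuinely different, and it is worth comparing the two.

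The paper inserts \emph{all} the diagonal zeros in one step: it takes the subcategory $D\subset\Ar[n]$ consisting of the top row together with all diagonal entries $(i,i)$, observes that the inclusion of the top row into $D$ is a sieve, applies right extension-by-zero to fill in all diagonal zeros at once, and then performs a \emph{single} left Kan extension $i_{2,!}\colon\D(D)\to\D(\Ar[n])$. It verifies membership in $\Sn\D$ via a detection lemma (restated as Proposition~\ref{prop:detectionlemma}): one constructs an explicit left adjoint to $\ul\to B^{i,j}$ for each relevant subsquare, so that the pointwise formula of Der4L collapses to a pushout. Your approach instead filters $\Ar[n]$ by rows, alternating ``add one diagonal zero along a sieve'' with ``fill in the rest of that row by left Kan extension.'' The tradeoff: you have more (but locally simpler) steps, and each of your left Kan extension steps still requires identifying the comma-category colimit from Der4L as a pushout, i.e.\ essentially the same cofinality/detection-lemma argument the paper applies once. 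So you do not actually escape that computation — you just distribute it over $n$ stages rather than doing it once from $D$. Your approach is closer in spirit to how one would write down the iterated pushouts by hand, and it makes the link with the $\mathbf{S}_2$ case visible, but the paper's single Kan extension from $D$ is more economical.

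One place where you are a bit too quick: your argument that the counit $Kj^\ast\Rightarrow\id_{\Sn\D}$ is an isomorphism appeals to ``any $Y\in\Sn\D$ is determined by its top row,'' which is precisely the statement you are trying to prove and therefore cannot be assumed. The honest argument is the one via full-faithfulness: each of your Kan extension steps is along a fully faithful functor (a sieve or cosieve inclusion), so by Proposition~\ref{prop:extensionbyzero} and~\cite[Proposition~1.20]{Gro13} the quasi-inverse $K$ is fully faithful; combined with the unit being an isomorphism, this exhibits $K$ as a fully faithful left adjoint, and what remains is to show that $K$ is essentially surjective onto $\Sn\D$, which follows by tracing through the essential image of each extension step (the extension-by-zero steps hit exactly objects vanishing on the diagonal; the cocartesian-square conditions in the definition of $\Sn\D$ guarantee that the final left Kan extension step hits the remaining objects). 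That is the content the paper packages into the claim that $\Phi$ is fully faithful with the right adjoint given by $j^\ast$.

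Finally, a small terminology note: the inclusion $F_{k-1}\cup\{(k,k)\}\hookrightarrow F_k$ you use for the row-filling step is indeed a sieve (no new object maps to an old one), but the operative fact is only that it is fully faithful; left Kan extension along a sieve is \emph{not} extension-by-zero and carries no special simplification from the sieve property, so nothing is gained by flagging it.
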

\begin{proof}
We will construct the quasiinverse directly.

First, consider the functor $i_0\colon [n-1]\to [n]$ defined by $i\mapsto i+1$. This map is a cosieve, so the morphism $i_{0,!}$ is extension by zero.

Second, consider the subcategory $D\subset \Ar[n]$ which contains the top row $(0,i)$ as well as all diagonal entries $(i,i)$. The inclusion $i_1\colon [n]\to J$ is a sieve, so $i_{1,\ast}$ is extension by zero.

The last step is to take the inclusion $i_2\colon D\to \Ar[n]$ and compute $i_{2,!}$. We claim that the image of this composition lies in $\Sn\D(e)\subset\D(\Ar[n])$. To see this, we use \cite[Proposition~3.10]{Gro13} to detect cocartesian squares, whose statement we provide now (using notation more convenient for our purposes).

\begin{prop}\label{prop:detectionlemma}
Let $\iota\colon \square\to K$ be a fully faithful functor and let $u\colon J\to K$ be any functor. We may consider the full category $K\setminus\iota(1,1)$ obtained by removing the image of the bottom-right corner of the square, and then form the comma category
\begin{equation*}
\vcenter{\xymatrix{
(K\setminus\iota(1,1)/\iota(1,1))\ar[r]\ar[d]&K\setminus\iota(1,1)\ar[d]\ar@{}[dl]|\swtrans\\
e\ar[r]_-{\iota(1,1)}&K
}}
\end{equation*}
where the righthand vertical map is the inclusion of the subcategory. This comma category receives a functor from $\ul$ induced by $\iota$, which we denote $\overline\iota$.

Assume that $\overline\iota\colon\ul\to(K\setminus\iota(1,1)/\iota(1,1))$ admits a left adjoint and that $\iota(1,1)$ does not lie in the image of $u\colon J\to K$. Then for all $X\in \D(J)$, the square $\iota^\ast u_! X\in \D(\square)$ is cocartesian.
\end{prop}

We will apply this `detection lemma' with $J=[n-1]$ and $K=\Ar[n]$. Let $\iota_{i,j}\colon\square\to \Ar[n]$ be the square given by $(a,b)\mapsto (i+b,j+a)$ for $0\leq i<j\leq n-1$, where the `flip' of $a$ and $b$ comes in because of an unfortunate inconsistency in the notation between $\square$ and $\Ar[n]$. It suffices to prove each of these squares is cocartesian, as any other square will be a composite of such squares. If a larger square can be subdivided into cocartesian squares, then it too is cocartesian by \cite[Proposition~3.13(1)]{Gro13}.

Because $\Ar[n]$ is a poset, we can identify the comma category
\begin{equation*}
(\Ar[n]\setminus (i+1,j+1)/(i+1,j+1))
\end{equation*}
as the full subcategory of $\Ar[n]$ on $(p,q)$ admitting a map to $(i+1,j+1)$, \ie $p\leq i+1$ and $q\leq j+1$, excluding $(p,q)=(i+1,j+1)$ as it has been removed. Call the resulting category $B^{i,j}$. We now construct a left adjoint for $\iota_{i,j}$ directly. We define~$\ell\colon B^{i,j}\to \ul$ by
\begin{equation*}
\ell(p,q)=\begin{cases} (0,0)&p\leq i\text{ and }q\leq j\\ (0,1)&q=j+1\\(1,0)&p=i+1\end{cases}
\end{equation*}
Direct computation shows that $\Hom(\ell(p,q),(a,b))=\Hom((p,q),\iota_{i,j}(a,b))$ for any elements $(p,q)\in B^{i,j}$ and $(a,b)\in\ul$. We can also construct the unit and the counit directly; the counit is just the identity on $\ul$, and the unit can only be the unique map $(p,q)\to \iota(\ell(p,q))$ at each $(p,q)\in B^{i,j}$.

This proves that $\iota_{i,j}^\ast i_{2,!} X$ is a cocartesian square for any $X\in\D(D)$. Pasting these squares together shows that any square in $i_{2,!}X$ is cocartesian. Moreover, if we have~$X=i_{1,\ast}i_{0,!}Y$ for some $Y\in\D([n-1])$, then $(i,i)^\ast X = 0$ by construction. Therefore $X\in\Sn\D(e)$. Call this total morphism $\Phi\colon\D([n-1])\to\Sn\D$.

Because $\Phi$ is constructed as left and right Kan extensions of fully faithful functors, it too is fully faithful. Moreover, it is left adjoint to $j^\ast$, with the counit $\id_{\D([n-1])}=j^\ast \Phi$ the identity modification. Because the left adjoint morphism is fully faithful, the unit is an isomorphism. Because both the unit and counit are invertible modifications, this gives an equivalence of categories. 
\end{proof}

\begin{remark}
We can give an alternative proof that $\Sn\D$ has the structure of a left pointed derivator. First, for $K\in\Dirf$, we define $\Sn\D(K)\subset \D^{\Ar[n]}(K)$ to be the full subcategory on objects $X$ such that $k^\ast X\in \Sn\D$ for any $k\in K$. This makes $\Sn\D$ a prederivator on $\Dirf$. Der1, Der2, and Der5 follow immediately from its definition as a certain levelwise subcategories of a derivator, and it is also (weakly) pointed because the 0 object of $\D(\Ar[n])$ is in $\Sn\D(e)$.

For the remainder of the axioms, it suffices to show that the left and right Kan extensions in $\D^{\Ar[n]}$ land in the appropriate subcategory. Let $X\in\Sn\D(J)$ and let~$u\colon J\to K$ be a functor. We only know for sure that $u_!X\in\D^{\Ar[n]}(K)$, so we need to check that for all $k\in K$, $(i,i)^\ast k^\ast u_! X = 0$ for all $i\in[n]$ and for all squares~$\iota\colon\square\to\Ar[n]$, $\iota^\ast k^\ast u_! X\in\D^\square$ is cocartesian.

For the first point, let us just examine $(i,i)^\ast u_! X\in\D(K)$. Because $(i,i)^\ast$ is cocontinuous, we have $(i,i)^\ast u_! X\cong u_! (i,i)^\ast X$. We know that $(i,i)^\ast X = 0\in\D(J)$ because $X\in\Sn\D(J)$, so we have $u_!(i,i)^\ast X=0\in\D(K)$ because $u_!$ is a pointed morphism. Therefore $k^\ast (i,i)^\ast u_!X=0\in\D(e)$, and these first two morphisms commute because they are pullback morphisms in unrelated diagrammatic directions, giving us $(i,i)^\ast k^\ast u_! X=0$ for any $i\in [n]$ and $k\in K$.

For the second point, we have
\begin{equation*}
\iota^\ast k^\ast u_! X\cong k^\ast \iota^\ast u_! X\cong k^\ast u_! \iota^\ast X
\end{equation*}
for reasons identical to the above. We know that $\iota^\ast X$ is a cocartesian square in~$\D(\square\times J)$, and $u_!$ preserves cocartesian squares. This implies that each $k^\ast u_! \iota^\ast X$ is cocartesian in $\D(\square)$, and following the chain of isomorphisms backwards finishes the proof.

There is no difference if we consider the right Kan extension sieve $u\colon J\to K$, as the extension by zero morphism $u_\ast\colon\D^{\Ar[n]}(J)\to\D^{\Ar[n]}(K)$ is still cocontinuous. That was the only fact we used in the case of left Kan extensions $u_!$,  and so we complete the proof.
\end{remark}

Because $\Sn\D$ has the structure of a left pointed derivator, it means that $\Sdot\D$ is actually a simplicial object in left pointed derivators. This means we can iterate the $\Sdot$ construction, and will do so shortly. But before that, we will define our relative K-theory construction. To do so, we need the following general simplicial constructions.

For any simplicial set $Y$, we may define a new simplicial set $PY$ (of paths in $Y$) by precomposing $Y$ by the functor $\Delta\op\to\Delta\op$ which sends $[n]$ to $[n+1]$ via $i\mapsto i+1$.
\begin{lemma}[Lemma~1.5.1,~\cite{Wal85}]
$PY$ is simplicially homotopy equivalent to the constant simplicial set on $Y_0$.
\end{lemma}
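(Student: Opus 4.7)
The plan is to exhibit a strong deformation retraction of $PY$ onto the constant simplicial set on $Y_0$ by leveraging the ``extra degeneracy'' structure that $PY$ acquires from the reindexing. By construction, the $n$-simplices of $PY$ are $Y_{n+1}$, and the simplicial structure maps of $PY$ are precisely the face and degeneracy maps of $Y$ with index shifted up by one. Crucially, the face map $d_0\colon Y_{n+1}\to Y_n$ and degeneracy $s_0\colon Y_{n+1}\to Y_{n+2}$ of $Y$ are \emph{not} used as structure maps for $PY$, and are free to play auxiliary roles.

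First I would define the two candidate homotopy inverses. Let $\iota\colon Y_0\to PY$ be the map of simplicial sets whose $n$-th component sends $x\in Y_0$ to $s_0^{n+1}(x)\in Y_{n+1}=(PY)_n$; this is a simplicial map because the constant simplicial set has only identity face/degeneracy maps, and each face or degeneracy of $PY$ applied to an iterated degeneracy of a vertex collapses back to a lower iterated degeneracy, by the standard simplicial identities. Let $\pi\colon PY\to Y_0$ denote the map of simplicial sets whose $n$-th component is $(d_0)^{n+1}\colon Y_{n+1}\to Y_0$; this too is compatible with the simplicial structure (here viewing $Y_0$ as a constant simplicial set).

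Next I would check $\pi\circ\iota=\id_{Y_0}$, which is immediate from $d_0 s_0=\id$. The real content is a simplicial homotopy $h\colon PY\times\Delta^1\to PY$ from $\iota\circ\pi$ to $\id_{PY}$. Following Waldhausen, the formula uses the forgotten $s_0$ of $Y$ as an ``extra degeneracy'' $s_{-1}\colon(PY)_n\to(PY)_{n+1}$ on $PY$; explicitly, given a simplex $y\in Y_{n+1}=(PY)_n$ and an order-preserving map $[n]\to[1]$ that is $0$ on $\{0,\dots,k-1\}$ and $1$ on $\{k,\dots,n\}$, the homotopy sends $y$ to $s_0^{k}(d_0^{n+1-k}(y))$ viewed in $Y_{n+1}$. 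Verifying the simplicial identities that make this an honest map out of $PY\times\Delta^1$, and checking the two boundary conditions, is a mechanical computation using only $d_i s_j$, $d_i d_j$, and $s_i s_j$ relations in $Y$.

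The main (but entirely routine) obstacle is bookkeeping for the homotopy: making sure the formula above is well-defined as a map of simplicial sets, which amounts to confirming that precomposition with every face and degeneracy of $\Delta^n\times\Delta^1$ gives the expected outcome. Once that is in hand, $h$ exhibits $\iota\pi\simeq\id_{PY}$ as a simplicial homotopy, and together with $\pi\iota=\id_{Y_0}$ this yields the desired simplicial homotopy equivalence.
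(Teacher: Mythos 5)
The paper states this lemma purely by citation to \cite{Wal85} and supplies no proof of its own, so there is nothing in the text to compare against; what you sketch is precisely Waldhausen's argument: the forgotten structure maps $s_0^Y$ and $d_0^Y$ furnish an extra degeneracy and an augmentation for $PY$, and these produce a strong deformation retraction onto the constant simplicial set on $Y_0$. Your definitions of $\iota$ and $\pi$ and the verification $\pi\iota=\id_{Y_0}$ are correct.

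There is, however, a small but real error in the displayed homotopy formula. Starting from $y\in Y_{n+1}=(PY)_n$, the map $d_0^{n+1-k}$ lands in $Y_k$, and then $s_0^k$ lands in $Y_{2k}$, which equals $Y_{n+1}$ only when $2k=n+1$; so $s_0^{k}(d_0^{n+1-k}(y))$ is not in general an element of $(PY)_n$. The exponents of $s_0$ and $d_0$ must agree so that the composite is an endomorphism of $Y_{n+1}$; the intended formula is $s_0^{\,n+1-k}d_0^{\,n+1-k}(y)$ (or its mirror $s_0^{\,k}d_0^{\,k}(y)$, depending on which end of $\Delta^1$ you orient toward the identity). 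With that correction the two boundary values come out as $\id_{PY}$ (exponent $0$) and $\iota_n\pi_n=s_0^{\,n+1}d_0^{\,n+1}$ (exponent $n+1$), exactly as needed, and the remaining compatibility with the face and degeneracy maps of $PY\times\Delta^1$ is, as you say, a routine verification of simplicial identities. The approach is sound; only the formula needs repair.
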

There is a projection map $PY\to Y$ induced by the 0-face map. Moreover, there is a functor $Y_1\to PY$ which is the inclusion of 0-simplices, as $(PY)_0=Y_1$. This gives a sequence $Y_1\to PY\to Y$ for any simplicial set $Y$.

Now suppose that $\Phi\colon\D\to\E$ is a strict cocontinuous morphism of left pointed derivators. We then define the simplicial category $\Sdot\Phi$ by the following 2-pullback in $\CAT$, sometimes called the iso-comma, which lies between the lax pullback and the strict pullback:
\begin{equation*}
\xymatrix{
\mathbf{S}_\bullet\Phi\ar[r]\ar[d]&P\Sdot\E\ar[d]^{d_0}\ar@{}[dl]|\netrans\ar@{}[dl]<-1.25ex>|(0.55)\cong\\
\mathbf{S}_\bullet\D\ar[r]_-{\Phi}&\mathbf{S}_\bullet\E
}
\end{equation*}
Specifically, at each $[n]\in\Delta^\text{op}$, we have a square
\begin{equation*}
\xymatrix{
\Sn\Phi\ar[r]\ar[d]&(P\Sdot\E)_n=\mathbf{S}_{n+1}\E\ar[d]^{d_0}\ar@{}[dl]|\netrans\ar@{}[dl]<-1.25ex>|(0.55)\cong\\
\Sn\D\ar[r]_-{\Phi}&\Sn\E
}
\end{equation*}
in which the top-left corner is explicitly the following: an object in $\Sn\Phi$ is a pair \linebreak $(A\in\Sn\D,B\in\mathbf{S}_{n+1}\E)$ along with an isomorphism $f_{A,B}\colon\Phi(A)\to d_0(B)\in\Sn\E$. Note that if $\Phi$ is not a strict morphism, then there are naturality problems with the face and degeneracy maps of $\Sdot\Phi$.

However, if $\Phi$ is not a strict morphism, then by Proposition~\ref{prop:strictification} there are two strict (cocontinuous) morphisms $\Pi_\Phi\colon\widetilde\D\to \D$ and $\widetilde\Phi\colon \widetilde \D\to\E$ such that $\Pi_\Phi$ is a weak equivalence and $\Phi \Pi_\Phi=\widetilde\Phi$. While $\Sdot\Phi$ will not be defined directly, it will have the same homotopy type as $\Sdot \widetilde\Phi$. Therefore it is not an issue for us to assume the morphism $\Phi$ is strict for the rest of this argument.

In Waldhausen K-theory, $\Sdot F$ for $F\colon \cC\to\cD$ an exact morphism of Waldhausen categories is again a simplicial Waldhausen category. For us, it is not immediately clear that $\Sdot\Phi$ should be a simplicial left pointed derivator, so we prove that now.

\begin{prop}
The simplicial category $\Sdot\Phi$ underlies a simplicial object in left pointed derivators (which we give the same name).
\end{prop}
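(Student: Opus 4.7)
The plan is to define $\Sn\Phi$ pointwise via the iso-comma construction: for $K\in\Dirf$, let $\Sn\Phi(K)$ have objects triples $(A,B,f)$ with $A\in\Sn\D(K)$, $B\in\mathbf{S}_{n+1}\E(K)$, and $f\colon \Phi_K(A)\xrightarrow{\sim}(d_0)_K(B)$, and with morphisms pairs $(g,h)$ making the obvious square commute. For $u\colon J\to K$ in $\Dirf$, set $u^*(A,B,f)=(u^*A, u^*B, u^*f)$. This is well-defined as a strict restriction because $\Phi$ is strict by hypothesis, and $d_0$ arises from a $\Dirf$-functor $\Ar[n]\to\Ar[n+1]$, so both legs of the iso-comma have strict restriction morphisms. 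All simplicial face and degeneracy maps for $\Sdot\Phi$ are assembled from strict morphisms in the same way and will therefore be strict cocontinuous once the full derivator structure is in place.

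Der1 and Der2 pass through iso-commas componentwise, using that $\Sn\D$ and $\mathbf{S}_{n+1}\E$ are already left pointed derivators by Proposition~\ref{prop:simplicialderivator}. Der5 similarly reduces to a componentwise lift: an incoherent diagram in $\Sn\Phi$ decomposes into compatible components that lift by strongness of $\Sn\D$ and $\mathbf{S}_{n+1}\E$, while the lifted $f$ remains an isomorphism by Der2 applied to $\Sn\E$. Pointedness is immediate: $(0,0,\id_0)$ is a zero object, using that cocontinuous morphisms preserve the zero object.

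For Der3L and Der4L, I would construct $u_!$ componentwise. Given $(A,B,f)\in\Sn\Phi(J)$ and $u\colon J\to K$, set
\begin{equation*}
u_!(A,B,f) := (u_!A,\; u_!B,\; u_!(f)),
\end{equation*}
where $u_!(f)$ is the composite isomorphism
\begin{equation*}
\Phi_K(u_!A)\;\xrightarrow{\sim}\;u_!\Phi_J(A)\;\xrightarrow{u_!f}\;u_!(d_0)_J(B)\;\xrightarrow{\sim}\;(d_0)_K(u_!B),
\end{equation*}
available from cocontinuity of $\Phi$ (by hypothesis) and of $d_0$ (as a restriction morphism, \cite[Proposition~2.5]{Gro13}). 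The adjunction $(u_!,u^*)$ on $\Sn\Phi$ and the pointwise formula (Der4L) follow by a diagram chase from the corresponding facts for each factor, together with the iso-comma universal property. The right extension by zero $u_*$ along a sieve $u$ is handled by the same recipe, since such $u_*$ is cocontinuous by Proposition~\ref{prop:stronglypointed} and therefore preserved by both $\Phi$ and $d_0$.

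The main technical obstacle is Der4L: one must verify that the canonical map $\pi_!\pr^*\Rightarrow k^*u_!$ in $\Sn\Phi$ is an isomorphism of triples, i.e.\ that the induced isos on the third component agree on both sides. This reduces, via the iso-comma universal property, to the same statement in each of $\Sn\D$ and $\mathbf{S}_{n+1}\E$, together with a bookkeeping argument using pseudonaturality of $\gamma^\Phi$ and the strictness of $d_0$ to trace the coherence isomorphisms around the relevant squares. No genuinely new phenomenon appears beyond those already handled in the construction of the simplicial left pointed derivator $\Sn$, but the iso-comma coherence calls for careful diagrammatic checking.
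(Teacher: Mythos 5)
Your proposal takes essentially the same route as the paper: define $\Sn\Phi(K)$ as the iso-comma of triples, observe that restrictions are strict because $\Phi$ and $d_0$ are, and construct $u_!$ (and $u_*$ along sieves) componentwise by conjugating the third component by the left (resp.\ right) mates of the structure isomorphisms of $\Phi$ and $d_0$, which are invertible by cocontinuity. Your composite formula for $u_!(f)$ matches the paper's $h_{A,B}=(\gamma_u^{d_0})_!\circ u_!f_{A,B}\circ(\gamma_u^\Phi)_!^{-1}$, and the remaining axioms are checked componentwise in both arguments.
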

\begin{proof}
For $K\in\Dirf$, the category $\Sn\Phi(K)$ will have objects a triple
\begin{equation*}
A\in\Sn\D(K),\;B\in\mathbf{S}_{n+1}\E(K),\;f_{A,B}\colon \Phi(A)\overset{\cong}\longrightarrow d_0(B),
\end{equation*}
which we will shorten to $(A,B,f_{A,B})$. For $u\colon J\to K$, we define $u^\ast (A,B,f_{A,B})$ to be the triple $(u^\ast A,u^\ast B, g_{A,B})$, where $g_{A,B}$ is the map filling in the commutative diagram of isomorphisms below:
\begin{equation*}
\vcenter{\xymatrix@C=3em{
u^\ast \Phi (A)\ar[d]_-{\gamma_u^\Phi}\ar[r]^-{u^\ast f_{A,B}}&u^\ast d_0(B)\ar[d]^-{\gamma_u^{d_0}}\\
\Phi(u^\ast A)\ar@{-->}[r]_-{g_{A,B}}&d_0(u^\ast B)
}}
\end{equation*}
The vertical isomorphisms are actually equalities because $\Phi$ is assumed to be strict (and $d_0$ is in any case), so $g_{A,B}=u^\ast f_{A,B}$. We include the full picture for analogy with what follows. This proves that $\Sn\Phi$ has the structure of a prederivator.

A fair question at this point is why we need the flexibility of an isomorphism $f_{A,B}$ if $\Phi$ is assumed to be strict. If we required $f_{A,B}$ to be the identity, then we know that $u^\ast (f_{A,B})$ would also be the identity by strict 2-functoriality. The issue arises for the left and right Kan extensions, which we address now. Suppose now that $(A,B,f_{A,B})\in\Sn\Phi(J)$. Then we define $u_!(A,B,f_{A,B})$ to be $(u_!A,u_!B,h_{A,B})$, where $h_{A,B}$ is the map filling in a similar commutative diagram of isomorphisms:
\begin{equation*}
\vcenter{\xymatrix@C=3em{
u_! \Phi(A)\ar[d]_-{(\gamma_u^\Phi)_!}\ar[r]^-{u_! f_{A,B}}&u_! d_0(B)\ar[d]^-{(\gamma_u^{d_0})_!}\\
\Phi(u_! A)\ar@{-->}[r]_-{h_{A,B}}&d_0(u_!B)
}}
\end{equation*}
Because $\Phi$ and $d_0$ are both cocontinuous, the left mates of the structure isomorphisms $\gamma_u$ are isomorphisms (by definition). However, even though $\gamma_u^{d_0}$ may be the identity, there is no reason to believe that its mates are also identities; they are only guaranteed to be isomorphisms. Therefore we have
\begin{equation*}
h_{A,B}=(\gamma_u^{d_0})_!\circ u_!f_{A,B}\circ (\gamma_u^{\Phi})_!\inv
\end{equation*}
This explains the definition of $\Sdot\Phi$ as an iso-comma (simplicial) category instead of a strict pullback as in Waldhausen's original construction.

These left Kan extensions for $\Sn\Phi$ are natural and (moreover) are the only ones that makes sense. The construction of left Kan extensions also generalises to right Kan extensions along sieves $u\colon J\to K$, as the right mates $(\gamma_u^{\Phi})_\ast$ and $(\gamma_u^{d_0})_\ast$ will also be isomorphisms by Der4R.

For strongness, because the (partial) underlying diagram functors for $\Sn\Phi$ are constructed as a pullback of those for $\Sn\D$ and $\mathbf{S}_{n+1}\E$, they are still full and essentially surjective on finite free categories.

Finally, $\Sn\Phi(e)$ is pointed by $(0,0,\cong)$, where the isomorphism is unique, which gives $\Sdot\Phi$ the structure of a simplicial left pointed derivator.
\end{proof}

We can now formulate the statement of relative derivator K-theory. Let $\Phi\colon\D\to\E$ be a strict cocontinuous morphism of left pointed derivators. First, there is an inclusion $\mathbf{S}_1\E\to P\Sdot\E$ as zero simplices, where we view $\mathbf{S}_1\E$ as a constant simplicial left pointed derivator. Composing this with the map $d_0\colon P\Sdot\E\to \Sdot\E$ we obtain a sequence
\begin{equation*}
\mathbf{S}_1\E\to P\Sdot\E\to\Sdot\E
\end{equation*}
the composition of which is constant. We can lift the map $\mathbf{S}_1\E\to P\Sdot\E$\linebreak to $\mathbf{S}_1\E\to\Sdot\Phi$ using the pullback defining $\Sdot\Phi$ and the constant (at zero)\linebreak map $\mathbf{S}_1\E\to\Sdot\D\to\Sdot\E$
\begin{equation*}
\vcenter{\xymatrix{
\mathbf{S}_1\E\ar@(r,u)[drr]^-{0\text{-simplices}}\ar@(d,l)[ddr]_-0\ar@{-->}[dr]&&\\
&\Sdot\Phi\ar[r]\ar[d]&P\Sdot\E\ar[d]^-{d_0}\ar@{}[dl]|\netrans\ar@{}[dl]<-1.25ex>|(0.55)\cong\\
&\Sdot\D\ar[r]_-\Phi&\Sdot\E
}}
\end{equation*}

Composing with the projection from $\Sdot\Phi$ to $\Sdot\D$ we obtain a sequence
\begin{equation*}
\mathbf{S}_1\E\to \Sdot\Phi\to\Sdot\D
\end{equation*}
the composition of which is again constant. Iterating the $\Sdot$ construction, we have the following theorem.

\begin{theorem}\label{thm:localization1}
The sequence
\begin{equation}\label{eq:localization}
i\Sdot\mathbf{S}_1\E\to i\Sdot\Sdot\Phi\to i\Sdot\Sdot\D
\end{equation}
is a homotopy fibration after (diagonally) geometrically realizing nerves and passing to the homotopy category of spaces $\cS$.
\end{theorem}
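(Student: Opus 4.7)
The plan is to adapt the Waldhausen relative K-theory fibration argument (\cite[Proposition~1.5.5]{Wal85}) to the derivator setting, using Theorem~\ref{thm:mainadditivity} as the essential input. The central observation is that the defining iso-comma square of $\Sdot\Phi$,
\begin{equation*}
\xymatrix{
\Sdot\Phi\ar[r]\ar[d]&P\Sdot\E\ar[d]^-{d_0}\\
\Sdot\D\ar[r]_-{\Phi}&\Sdot\E
}
\end{equation*}
should remain a homotopy pullback after applying $|i\Sdot(-)|$. Granted this, the asserted fibration follows from two further inputs: the path space lemma \cite[Lemma~1.5.1]{Wal85} identifies $|i\Sdot P\Sdot\E|$ with $|i\Sdot\mathbf{S}_1\E|$, and the composite $\mathbf{S}_1\E\to\Sdot\Phi\to\Sdot\D$ is by construction the zero map, placing $\mathbf{S}_1\E$ on the fiber side of the sequence rather than on the total-space side.

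To establish the homotopy pullback property, I would first invoke Proposition~\ref{prop:strictification} freely to strictify wherever needed, then apply a Bousfield--Friedlander style realisation lemma to the bisimplicial map $i\Sdot d_0\colon i\Sdot P\Sdot\E\to i\Sdot\Sdot\E$. The relevant $\pi_*$-Kan condition reduces, after a standard manipulation, to the statement that for each $n$ the face map $d_0\colon \mathbf{S}_{n+1}\E\to\mathbf{S}_n\E$ is a K-theoretic fibration with fiber $\mathbf{S}_1\E$. This in turn is an application of Theorem~\ref{thm:mainadditivity} to the cofibration morphism of derivators $\mathbf{S}_n\E\to(\mathbf{S}_n\E)_{\text{cof}}$ obtained by `inserting a new first term' in the flag, exactly mirroring the Waldhausen proof of the analogous statement for categories with cofibrations, where additivity produces the splitting $K(\mathbf{S}_{n+1}\E)\simeq K(\mathbf{S}_n\E)\sqcup K(\mathbf{S}_1\E)$.

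The main obstacle I anticipate is making the Bousfield--Friedlander reduction precise in this setting. Several of the morphisms in sight (notably the various Kan extensions entering both the iso-comma construction and the $\mathbf{S}_n$-decomposition above) are only pseudonatural, and while strictification repairs this abstractly, each subsequent application of cocontinuity can reintroduce non-strictness that must be carefully resolved. An alternative, possibly cleaner, route is to bypass Bousfield--Friedlander entirely and argue levelwise: construct an explicit cocontinuous splitting of each $d_0\colon \mathbf{S}_{n+1}\E\to\mathbf{S}_n\E$ using the pointed structure of $\E$, then read off the fiber sequence directly from additivity applied to that splitting. In either formulation Theorem~\ref{thm:mainadditivity} does all the actual work, which is why the delooping corollary advertised in the introduction drops out as the special case $\Phi=\id_\D$: there $\Sdot\Phi$ becomes $P\Sdot\D\simeq\mathbf{S}_1\D$, and the resulting fibration $|i\Sdot\mathbf{S}_1\D|\to|i\Sdot\mathbf{S}_1\D|\to|i\Sdot\Sdot\D|$ exhibits $|i\Sdot\Sdot\D|$ as a delooping of $|i\Sdot\D|$.
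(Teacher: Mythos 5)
Your proposal attempts a genuinely different strategy from the paper, but there is a gap in the final reduction step. The paper follows Waldhausen's argument for \cite[Proposition~1.5.5]{Wal85} quite closely: it applies the simple realization lemma from \cite{Wal78} (with a connectivity hypothesis in place of the full $\pi_*$-Kan condition) directly to the three-term sequence $i\Sdot\mathbf{S}_1\E\to i\Sdot\Sn\Phi\to i\Sdot\Sn\D$, and verifies levelwise fibrancy for each fixed $n$ by constructing an explicit cofibration morphism $\Xi_n\colon\Sn\Phi\to(\Sn\Phi)_\text{cof}$ -- built via functors $p_n$ and $q_n$ acting on the $\Sn\D$- and $\mathbf{S}_{n+1}\E$-components of the iso-comma -- whose source, target, and quotient land in $\mathbf{S}_1\E$, $\Sn\Phi$, and $\Sn\D$ respectively, then invokes additivity on $\Sn\Phi$. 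You instead propose to establish the homotopy pullback property of the defining iso-comma square after $|i\Sdot(-)|$, with the levelwise analysis applied to $d_0\colon\mathbf{S}_{n+1}\E\to\mathbf{S}_n\E$ and additivity applied on $\mathbf{S}_{n+1}\E$.

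The gap is the step you flag as your "main obstacle," and it is more serious than bookkeeping. Suppose you do show that $\mathbf{S}_1\E\to\mathbf{S}_{n+1}\E\to\mathbf{S}_n\E$ is a levelwise K-fibration (your proposed additivity argument on $\mathbf{S}_{n+1}\E$ is sound; it is essentially the $q_n$ portion of the paper's $\Xi_n$), and feed this into a realization lemma to conclude that $|i\Sdot\mathbf{S}_1\E|\to|i\Sdot P\Sdot\E|\to|i\Sdot\Sdot\E|$ is a homotopy fibration. You must then base-change along $\Phi\colon|i\Sdot\Sdot\D|\to|i\Sdot\Sdot\E|$ to produce the statement about $\Sdot\Phi$. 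But the output of the Waldhausen/Bousfield--Friedlander machinery is a fibration up to homotopy -- a quasifibration-type condition -- and quasifibrations are not stable under arbitrary base change; only Serre or Dold fibrations pull back to homotopy fibrations. Nothing in sight promotes $|i\Sdot d_0|$ to such a fibration, and verifying the $\pi_*$-Kan hypothesis does not repair this: that criterion controls whether realization sends a levelwise fiber sequence to a fiber sequence, not whether the resulting map pulls back well. Waldhausen's argument, and the paper's, avoids the issue entirely by targeting the levelwise sequence $\mathbf{S}_1\E\to\Sn\Phi\to\Sn\D$ directly, so no base change is ever required; the explicit cofibration morphism $\Xi_n$ on $\Sn\Phi$ is the engine that makes this possible. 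Your second, "cleaner" levelwise alternative still concentrates on $d_0$ rather than on $\Sn\Phi\to\Sn\D$, and so inherits the same base-change gap.
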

\begin{proof}
We proceed as in~\cite[Proposition~1.5.5]{Wal85}. We use the following `realization lemma' from~\cite{Wal78}:
\begin{lemma}\label{lemma:realizationlemma}
Let $X_{\bullet\bullet}\to Y_{\bullet\bullet}\to Z_{\bullet\bullet}$ be a sequence of bisimplicial sets so that $X_{\bullet\bullet}\to Z_{\bullet\bullet}$ is constant. Suppose that $X_{\bullet n}\to Y_{\bullet n}\to Z_{\bullet n}$ is a homotopy fibration for every $n$. Suppose further that $Z_{\bullet n}$ is connected for every $n$. Then the original sequence is also a homotopy fibration.
\end{lemma}

We are indeed in this situation, up to a little unpacking. We have a sequence of bisimplicial categories, which we will turn into a sequence of trisimplicial sets by taking the nerve:
\begin{equation*}
N_\bullet i\Sdot\mathbf{S}_1\E\to N_\bullet i\Sdot\Sdot\Phi\to N_\bullet i\Sdot\Sdot\D.
\end{equation*}
However, let us treat this as a bisimplicial set by considering the first two simplicial directions as one diagonal direction, \ie if we let $\bullet$ and $\star$ denote the two different directions, we have
\begin{equation*}
N_\bullet i\Sdot\mathbf{S}_1\E\to N_\bullet i\Sdot\mathbf{S}_\star\Phi\to N_\bullet i\Sdot\mathbf{S}_\star\D
\end{equation*}
which is now a sequence of bisimplicial sets. The first term appears the same because~$\mathbf{S}_1\E$ was constant in the $\star$ direction. As geometric realization may be taken variable-by-variable or diagonally (in fact, these give homeomorphic spaces), it suffices to show that this second sequence of bisimplicial sets is a homotopy fibration.

The last thing to check is that $N_\bullet i\Sdot\Sn\D$ is connected for all $n$. But $N_0 i\mathbf{S}_0\Sn\D$ consists of the zero objects in $\Sn\D(e)$, all of which are isomorphic, hence there is a 1-simplex in $N_1 i\mathbf{S}_0\Sn\D$ which is this isomorphism. Applying a degeneracy map in the $\mathbf{S}_0$-direction will give us a 1-simplex in the diagonal simplicial set $N_1 i\mathbf{S}_1\Sn\D$ which connects these 0-simplices, showing that this simplicial set is indeed connected.

Therefore let us fix an $n$ and consider the sequence
\begin{equation*}
i\Sdot\mathbf{S}_1\E\to i\Sdot\Sn\Phi\to i\Sdot\Sn\D
\end{equation*}
of simplicial left pointed derivators. We will make our argument here and pass to the nerve and the corresponding diagonal simplicial sets as we outlined above.

We will show, as Waldhausen does, that this relative K-theory sequence is homotopic to the trivial homotopy fibration. We will do so using the additivity theorem.

We define a cofibration morphism of derivators 
\begin{equation*}
\Xi_n\colon \Sn\Phi\to(\Sn\Phi)_\text{cof}
\end{equation*}
such that $(0,0)^\ast\Xi_n$ takes values in a copy of $\mathbf{S}_1\E$ inside $\Sn\Phi$, $(1,0)^\ast\Xi_n=\id_{\Sn\Phi}$, and~$(1,1)^\ast\Xi_n$ takes values in a copy of $\Sn\D$ inside $\Sn\Phi$.

The sketch for the construction of $\Xi_n$ is the following: for $(A,B,f_{A,B})\in \Sn\Phi$,
\begin{equation*}
\Xi_n(A,B,f_{A,B})=
\vcenter{\xymatrix{
(0,s_n\cdots s_1(0\to (0,1)^\ast B\to 0),\cong)\ar[r]\ar[d]&(A,B,f_{A,B})\ar[d]\\
(0,0,\cong)\ar[r]&(A,s_0d_0(B),f_{A,B})
}}
\end{equation*}
where $s_i$ are the degeneracy maps of the simplicial set $P\Sn\E$. The entry in the top left is a degenerate $n$-simplex in $P\Sn\E$ which comes from $(0\to(0,1)^\ast B\to 0)\in P\mathbf{S}_0\E$. The isomorphisms between zero objects on the lefthand side are more subtle than they appear, and we will address this below.

To begin with the $\Sn\D$ component of $\Sn\Phi$, we define a cofibration morphism\linebreak $\Sn\D\to(\Sn\D)_\text{cof}$. Consider the map $p_n\colon\Ar[n]\times\square\to\Ar[n]$ defined as follows: for\linebreak $(a,b)=(1,0)$ or $(1,1)$, we let $p_n(i,j,a,b)=(i,j)$. Further, we let $p_n(i,j,0,0)=(0,0)$ and $p_n(i,j,0,1)=(0,0)$ be constant. We illustrate the functor $p_2\colon \Ar[2]\times\square\to\Ar[2]$, using bold arrows for the $\square$ dimension of the diagram. We label the objects of the domain according to where they map in the codomain, which shows better how the pullback $p_2^\ast$ behaves:
\begin{equation*}
\vcenter{\xymatrix@C=1em@R=1em{
0_1\ar[r]&0_1\ar[r]\ar[d]&0_1\ar[d] && 0_1\ar[r]&a\ar[r]\ar[d]&b\ar[d]\\
& 0_1\ar[r]&0_1\ar[d] &\ar@{=>}[r]& &0_2\ar[r]&c\ar[d]\\
&\ar@{=>}[d]& 0_1 && &\ar@{=>}[d]&0_3\\
&&&&&&&&\\
0_1\ar[r]&0_1\ar[r]\ar[d]&0_1\ar[d] && 0_1\ar[r]&a\ar[r]\ar[d]&b\ar[d]\\
& 0_1\ar[r]&0_1\ar[d] &\ar@{=>}[r]& &0_2\ar[r]&c\ar[d]\\
&& 0_1 && &&0_3
}}\to\quad
\vcenter{\xymatrix@C=1em@R=1em{
0_1\ar[r]&a\ar[r]\ar[d]&b\ar[d]\\
&0_2\ar[r]&c\ar[d]\\
&&0_3
}}
\end{equation*}
The horizontal arrows in the $\square$ dimension are necessarily zero maps, and the vertical arrows are identity maps, as $p_n(-,-,1,-)\colon \Ar[n]\times[1]\to\Ar[n]$ defining the righthand vertical map is just the projection $\id_{\Ar[n]}\times\pi_{[1]}$ and $p_n(-,-,0,-)$ defining the lefthand vertical map is the constant functor $\Ar[n]\times[1]\to e$. This square is cocartesian, and establishes the construction for $\Sn\D$. Note that the definition of $p_n$ implicitly uses that $n\geq 1$, but for the case $n=0$, $\Ar[0]=e$ so $p_0\colon\square\to e$ is the constant functor~$\pi_{\square}$ by necessity.

For the $P\Sn\E=\mathbf{S}_{n+1}\E$ component of the derivator $\Sn\Phi$, we define a\linebreak map $q_n\colon\Ar[n+1]\times\square\to\Ar[n+1]$ computing what we want. First, we will\linebreak have ${q_n(i,j,1,0)=(i,j)}$, just as $p_n$ was defined. To deal with the other $(a,b)\in\square$, we start with $(a,b)=(0,0)$:
\begin{equation*}
q_n(i,j,0,0)=\begin{cases} (0,0)&(i,j)=(0,0)\\ (0,1)&i=0,1\text{ and }j\geq 1\\(1,1)&\text{otherwise} \end{cases}
\end{equation*}

We want $q_n(-,-,0,1)$ to be a constant functor (as $p_n(-,-,0,1)$ was) but in this case we let $q_n(i,j,0,1)=(1,1)$. Finally, for the case $(a,b)=(1,1)$,
\begin{equation*}
q_n(i,j,1,1)=\begin{cases} (1,1)&(i,j)=(0,0)\\ (1,j)&i=0\text{ and }j\geq 1\\(i,j)&\text{otherwise}\end{cases}
\end{equation*}
To illustrate $q_2$, we have the following picture, where we again label the zeroes:
\begin{equation*}
\vcenter{\xymatrix@C=1em@R=1em{
0'_0\ar[r]&\alpha\ar[r]^-=\ar[d]&\alpha\ar[d]\ar[r]^-=&\alpha\ar[d] && 0'_0\ar[r]&\alpha\ar[r]\ar[d]&\beta\ar[d]\ar[r]&\gamma\ar[d]\\
& 0'_1\ar[r]&0'_1\ar[d]\ar[r]&0'_1\ar[d] &\ar@{}[d]_{}="a0"&\ar@{}[d]_{}="a1" &0'_1\ar[r]&\delta\ar[d]\ar[r]&\varepsilon\ar[d]\\
&& 0'_1\ar[r]&0'_1\ar[d] &&  &&0'_2\ar[r]&\zeta\ar[d]\\
&\ar@{}[r]_{}="c0"&& 0'_1 && &\ar@{}[r]_{}="d0"&&0'_3\\
&\ar@{}[r]_{}="c1"&&&&&\ar@{}[r]_{}="d1"&&\\
0'_1\ar[r]&0'_1\ar[r]\ar[d]&0'_1\ar[d]\ar[r]&0'_1\ar[d] && 0'_1\ar[r]&0'_1\ar[r]\ar[d]&\delta\ar[d]^-=\ar[r]&\varepsilon\ar[d]^-=\\
& 0'_1\ar[r]&0'_1\ar[d]\ar[r]&0'_1\ar[d] &\ar@{}[d]_{}="b0"&\ar@{}[d]_{}="b1" &0'_1\ar[r]&\delta\ar[d]\ar[r]&\varepsilon\ar[d]\\
&& 0'_1\ar[r]&0'_1\ar[d] &&  &&0'_2\ar[r]&\zeta\ar[d]\\
&&& 0'_1 && &&&0'_3
\ar@{=>} "a0";"a1" \ar@{=>} "b0";"b1" \ar@{=>} "c0";"c1" \ar@{=>} "d0";"d1"
}}
\to
\vcenter{\xymatrix@C=1em@R=1em{
0'_0\ar[r]&\alpha\ar[r]\ar[d]&\beta\ar[r]\ar[d]&\gamma\ar[d]\\
&0'_1\ar[r]&\delta\ar[r]\ar[d]&\varepsilon\ar[d]\\
&&0'_2\ar[r]&\zeta\ar[d]\\
&&&0'_3
}}
\end{equation*}

To check that this defines the functor $\Xi_n\colon\Sn\Phi\to(\Sn\Phi)_\text{cof}$, we need to construct the isomorphisms $\Phi(p_n^\ast A)\to d_0(q_n^\ast B)$ corresponding to $\Xi_n(A,B,f_{A,B})$, and we will demonstrate how do so at each corner of $\square$. The isomorphisms at $(1,0)$ and $(1,1)$ are just $f_{A,B}$, but the morphisms at $(0,0)$ and $(0,1)$ are between specific zero objects. In the above diagrams, $f_{A,B}$ defines an isomorphism $\Phi(0_i)\to 0'_i$ for each $i$. By construction, $(0,0)^\ast p_n^\ast A$ and $(0,1)^\ast p_n^\ast A$ are the constant diagram on the zero object~$0_1$, and $d_0((0,0)^\ast q_n^\ast B)$ and $d_0((0,1)^\ast q_n^\ast B)$ are the constant diagram on the zero object~$0'_1$. Therefore $(0,0)^\ast f_{A,B}$ is still the appropriate isomorphism of zero objects and no additional data is required. Indeed, if we were forced to compose isomorphisms of the form $\Phi(0_1)\to 0'_1\to 0'_2$ (or worse, $\Phi(0_1)\to 0'_1\leftarrow 0_0'$), we could not complete this construction coherently (as we discussed in Remark~\ref{rk:strongproblem}).

Thus we define
\begin{equation*}
\Xi_n(A,B,f_{A,B})=(p_n^\ast A,q_n^\ast B,p_n^\ast f_{A,B}).
\end{equation*}
The structure isomorphisms $\gamma_u^{\Xi_n}$ come directly from the structure isomorphisms $\gamma_u^{p_n}$ and~$\gamma_u^{q_n}$.

The source morphism $(0,0)^\ast \Xi_n$ has image a subcategory of $\Sn\Phi$ equivalent to $\mathbf{S}_1\E$, and the quotient morphism $(1,1)^\ast \Xi_n$ has image a subcategory equivalent to $\Sn\D$. In particular, every object $(A,s_0d_0(B),f_{A,B})$ is isomorphic to the object $(A,s_0\Phi(A),\id)$. The target morphism $(1,0)^\ast\Xi$ has image precisely $\Sn\Phi$. These morphisms are moreover essentially surjective.

Now, there are two projections $\pi_1\colon\Sn\Phi\to\mathbf{S}_1\E$ and $\pi_2\colon\Sn\Phi\to\Sn\D$. The first is defined by $\pi_1(A,B,f_{A,B})=(0\to (0,1)^\ast B\to 0)$ and the second defined by $\pi_2(A,B,f_{A,B})=A$. This gives a total projection
\begin{equation*}
\rho\colon\Sn\Phi\to \mathbf{S}_1\E\times \Sn\D.
\end{equation*}
This map has a section $\sigma\colon\mathbf{S}_1\E\times\Sn\D\to\Sn\Phi$ given by
\begin{equation*}
\sigma((0\to b\to 0),A)=(A,s_n\cdots s_1(0\to b \to 0)\sqcup s_0\Phi(A),\id).
\end{equation*}
For example, for $A=(a_1\to a_2\to a_3)\in\mathbf{S}_2\D$, the component of $\sigma(b,A)$ in $P\mathbf{S}_2\E$ is
\begin{equation*}
\vcenter{\xymatrix@R=1em@C=1em{
0\ar[r]&b\ar[r]\ar[d]&b\sqcup \Phi(a_1)\ar[r]\ar[d]&b\sqcup \Phi(a_2)\ar[d]\\
& 0\ar[r] & \Phi(a_1)\ar[r]\ar[d] & \Phi(a_2)\ar[d]\\
&& 0\ar[r]&\Phi(a_3)\ar[d]\\
&&& 0
}}
\end{equation*}

The actual construction of $\sigma$ relies on the additive structure of $\Sn\Phi$. We first define $\mathbf{S}_1\E\times\Sn\D\to\Sn\Phi\times\Sn\Phi$ by the two morphisms
\begin{equation*}
\vcenter{\xymatrix@C=1em{
\mathbf{S}_1\E\times\Sn\D\ar[r]&\mathbf{S}_1\E\ar[r]^-{S'}&\Sn\Phi
}}\text{ and }
\vcenter{\xymatrix@C=1em{
\mathbf{S}_1\E\times\Sn\D\ar[r]&\Sn\D\ar[r]^-{Q'}&\Sn\Phi
}}
\end{equation*}
where the first map in each case is the projection. The morphism $S'$ is defined by 
\begin{equation*}
S'(0\to b\to 0)=(0,s_n\cdots s_1(0\to b \to 0),\cong)
\end{equation*}
and the morphism $Q'$ is defined by
\begin{equation*}
Q'(A)=(A,s_0\Phi(A),\id).
\end{equation*}
We then compose $(S',Q')\colon \mathbf{S}_1\E\times\Sn\D\to\Sn\Phi\times\Sn\Phi$ with the coproduct map\linebreak $\Sn\Phi\times\Sn\Phi\cong\Sn\Phi^{e\sqcup e}\to\Sn\Phi$. The definition of this map and how to strictify it is contained in Equation~\ref{eq:strictifiedcoproduct} above.

By construction, $\rho\sigma\cong\id_{\mathbf{S}_1\E\times\Sn\D}$, and so obtain a homotopy after passing to K-theory. We now to show that the reverse composition $\sigma\rho$ is homotopic to the identity.

By applying the additivity theorem to $\Xi_n$, the identity on $\Sn\Phi$ is homotopic to the sum of the inclusion of $\mathbf{S}_1\E$ and $\Sn\D$, and this a morphism isomorphic to $\sigma\rho$. Therefore map of sequences
\begin{equation*}
\xymatrix{
i\Sdot\mathbf{S}_1\E\ar[r]\ar[d]^-=& i\Sdot\Sn\Phi\ar[r]\ar[d]^-{\sim}& i\Sdot\Sn\D\ar[d]^-=\\
i\Sdot\mathbf{S}_1\E\ar[r]& i\Sdot\mathbf{S}_1\E\times i\Sdot\Sn\D \ar[r]& i\Sdot\Sn\D
}
\end{equation*}
has all vertical maps equivalences. The bottom sequence is a trivial homotopy fibration, so the top sequence is also a homotopy fibration, completing the proof. 
\end{proof}

\begin{defn}
For $\Phi\colon\D\to\E$ a strict cocontinuous morphism of left pointed derivators, define
\begin{equation*}
K(\Phi):=\Omega^2|i\Sdot\Sdot\Phi|.
\end{equation*}
\end{defn}

There is one more corollary of relative K-theory that bears mentioning before using this definition.

\begin{cor}
The topological space $K(\D)$ is an infinite loop space.
\end{cor}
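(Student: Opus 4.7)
The plan is to apply Theorem~\ref{thm:localization1} in the special case $\Phi=\id_\D\colon\D\to\D$, extract a single delooping $|N_\bullet i\Sdot\D|\simeq\Omega|N_\bullet i\Sdot\Sdot\D|$, and then iterate. First, I would identify the middle term $\Sdot(\id_\D)$ of the homotopy fibration. An object of $\Sn(\id_\D)$ is a triple $(A,B,f\colon A\overset{\cong}{\longrightarrow} d_0 B)$ with $A\in\Sn\D$, $B\in\mathbf{S}_{n+1}\D$; the projection $(A,B,f)\mapsto B$ to $(P\Sdot\D)_n=\mathbf{S}_{n+1}\D$ admits the section $B\mapsto(d_0 B,B,\id)$, and its fibers are contractible (they amount to choices of isomorphism with $d_0 B$, which is an iso-comma that is equivalent to a point). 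Hence $i\Sdot\Sdot(\id_\D)\simeq i\Sdot P\Sdot\D$ after realization. By the evident derivator analogue of Waldhausen's Lemma~1.5.1, the path construction $P\Sdot\D$ is simplicially homotopy equivalent to the constant simplicial object on $\mathbf{S}_0\D=0$, so $|N_\bullet i\Sdot P\Sdot\D|$ is contractible.

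Next, Proposition~\ref{prop:simplicialderivator} with $n=1$ gives an equivalence of derivators $\mathbf{S}_1\D\simeq\D^{[0]}=\D$, hence $|N_\bullet i\Sdot\mathbf{S}_1\D|\simeq|N_\bullet i\Sdot\D|$. Combining the contractibility of the middle term with the homotopy fibration of Theorem~\ref{thm:localization1}, and noting that $|N_\bullet i\Sdot\Sdot\D|$ is connected because $\mathbf{S}_0\Sn\D$ is trivial at every level, I obtain the delooping
\[
|N_\bullet i\Sdot\D|\simeq\Omega|N_\bullet i\Sdot\Sdot\D|,
\]
and therefore $K(\D)\simeq\Omega^2|N_\bullet i\Sdot\Sdot\D|$.

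To upgrade this to an infinite loop space structure, I would invoke Proposition~\ref{prop:simplicialderivator} and the remark following it, which guarantee that each $\Sn\D$ is again a left pointed derivator, and therefore by induction every derivator appearing at any level of the iterated $\Sdot$-construction is left pointed. Applying the delooping of the previous paragraph levelwise to the simplicial object $\Sdot^k\D$ (and taking diagonal realizations of nerves) yields, for every $k\geq 0$,
\[
|N_\bullet i\Sdot^{k+1}\D|\simeq\Omega|N_\bullet i\Sdot^{k+2}\D|.
\]
Setting $E_k:=|N_\bullet i\Sdot^{k+1}\D|$ produces an $\Omega$-spectrum $\{E_k\}_{k\geq 0}$ whose infinite delooping has $K(\D)=\Omega E_0$ as $0$-th space, exhibiting $K(\D)$ as an infinite loop space.

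The main obstacle is justifying the commutation of $\Omega$ with diagonal geometric realization needed to propagate the delooping through the iteration. This follows (as in Waldhausen's treatment) from the fact that $\mathbf{S}_0$ of any left pointed derivator is trivial, so $|N_\bullet i\Sdot^k\D|$ is connected at every stage $k\geq 1$, and the realization lemma (Lemma~\ref{lemma:realizationlemma}) can be applied repeatedly to assemble the levelwise fibrations into the required spectrum structure.
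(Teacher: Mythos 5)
Your proposal is correct and follows essentially the same route as the paper's proof: apply Theorem~\ref{thm:localization1} with $\Phi=\id_\D$, identify $\Sdot\id_\D$ with the contractible path object $P\Sdot\D$, invoke the equivalence $\mathbf{S}_1\D\simeq\D$, and iterate over the simplicial left pointed derivators $\Sdot^k\D$ to produce the $\Omega$-spectrum. The only superficial difference is that you establish $\Sdot\id_\D\simeq P\Sdot\D$ by contractibility of fibers rather than by noting that the iso-comma pullback of the levelwise equivalence $\id_\D$ is itself a levelwise equivalence, as the paper does.
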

\begin{proof}
If we take the case $\Phi=\id_\D\colon\D\to\D$, we can identify $\Sdot\Phi$ with $P\Sdot\D$. There is certainly an equivalence of simplicial categories $\Sdot\!\id_\D(K)\to P\Sdot\D(K)$ for each $K\in\Dirf$ as the pullback of the equivalence $\id_\D\colon\Sdot\D(K)\to\Sdot\D(K)$ is still an equivalence. But because the morphism $\Sdot\!\id_\D\to P\Sdot\D$ is defined globally and is levelwise an equivalence of categories, we get an equivalence of simplicial left pointed derivators.

Using this replacement, we have a fibration
\begin{equation*}
i\Sdot\mathbf{S}_1\D\to P(i\Sdot\Sdot\D)\to i\Sdot\Sdot\D,
\end{equation*}
where $P$ modifies the first simplicial direction. But now the middle term is contractible, giving a homotopy equivalence
\begin{equation*}
\xymatrix{
|i\Sdot\D|\cong |i\Sdot\mathbf{S}_1\D|\ar[r]^-\sim& \Omega |i\Sdot\Sdot\D|
}
\end{equation*}
We use here that the bisimplicial category $i\Sdot\mathbf{S}_1\D$ is homotopy equivalent to the simplicial category $i\Sdot\D$. The equivalence is given (using morphisms of derivators and passing to maps up to homotopy in the geometric realization) by the forgetful functor $\mathbf{S}_1\D\to~\D$ on the one hand and an iterated extension by zero morphism for the inverse.

Specifically, consider the cosieve $t\colon e\to[1]$ given by the inclusion of the target and the sieve $s\colon [1]\to\Ar[1]$ given by the inclusion into $(0,0)\to(0,1)$. Then the morphism $s_\ast t_!\colon \D\to \D(\Ar[1])$ is, for $a\in \D$,
\begin{equation*}
a\mapsto
\vcenter{\xymatrix@R=1em@C=1em{
0\ar[r]& a}}
\mapsto
\vcenter{\xymatrix@R=1em@C=1em{
0\ar[r]&a\ar[d]\\
&0
}}
\end{equation*}
This justifies the isomorphism above.

We now replace $\D$ by the simplicial left pointed derivator $\Sdot\D$ and repeat the process, obtaining
\begin{equation*}
\vcenter{\xymatrix{
|i\Sdot\Sdot\D|\cong|i\Sdot\mathbf{S}_1\Sdot\D|\ar[r]^-\sim&\Omega|i\Sdot\Sdot\Sdot\D|
}}
\end{equation*}
which implies that $|i\Sdot\D|$ is equivalent to $\Omega^2|i\Sdot\Sdot\Sdot\D|$.

By induction, we then conclude
\begin{equation*}
\xymatrix{
K(\D)=\Omega|i\Sdot\D|\ar[r]^-\sim& \Omega^{(n)}|i\Sdot^{(n)}\D|
}
\end{equation*}
and so $K(\D)$ is an infinite loop space. In particular, we can view $K(\D)$ as a connective $\Omega$-spectrum.
\end{proof}

\begin{cor}
Let $\Phi\colon\D\to\E$ be a strict cocontinuous morphism of left pointed derivators. Then there is a homotopy fibration
\begin{equation*}
K(\Phi)\to K(\D)\to K(\E)
\end{equation*}
\end{cor}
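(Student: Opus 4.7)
\noindent The plan is to obtain the desired sequence by delooping and rotating the homotopy fibration supplied by Theorem~\ref{thm:localization1}, and then matching each term with the correct K-theory space via the delooping equivalences established in the preceding corollary.

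Concretely, Theorem~\ref{thm:localization1} applied to $\Phi$ produces the homotopy fibration
\begin{equation*}
|i\Sdot\mathbf{S}_1\E|\to |i\Sdot\Sdot\Phi|\to |i\Sdot\Sdot\D|
\end{equation*}
in $\cS$. All three terms are pointed by their zero objects, so this fibration extends to an infinite long fiber sequence. Rotating twice, one extracts the homotopy fibration
\begin{equation*}
\Omega|i\Sdot\Sdot\Phi|\to \Omega|i\Sdot\Sdot\D|\to |i\Sdot\mathbf{S}_1\E|,
\end{equation*}
and applying $\Omega$ once more gives the homotopy fibration
\begin{equation*}
\Omega^2|i\Sdot\Sdot\Phi|\to \Omega^2|i\Sdot\Sdot\D|\to \Omega|i\Sdot\mathbf{S}_1\E|.
\end{equation*}

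It remains to identify each of these three spaces with the appropriate K-theory. The left-hand term is $K(\Phi)$ by the definition introduced just before this corollary. The preceding corollary provides the equivalence $|i\Sdot\D|\simeq\Omega|i\Sdot\Sdot\D|$, so $\Omega^2|i\Sdot\Sdot\D|\simeq\Omega|i\Sdot\D|=K(\D)$. The same corollary (via the forgetful morphism $\mathbf{S}_1\E\to\E$ and its inverse built from the iterated extension-by-zero $s_\ast t_!$) yields $|i\Sdot\mathbf{S}_1\E|\simeq|i\Sdot\E|$, hence $\Omega|i\Sdot\mathbf{S}_1\E|\simeq\Omega|i\Sdot\E|=K(\E)$. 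The only delicate point is compatibility of these identifications with the structure maps in the rotated fibration, but since each equivalence is induced by a (zigzag of) strict cocontinuous morphisms of simplicial left pointed derivators, the naturality required is already implicit in Corollary~\ref{cor:strictification} and in the proof of the preceding corollary; no new obstacle arises.
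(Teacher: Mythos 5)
Your proof is correct and matches the paper's own argument: rotate the fibration of Theorem~\ref{thm:localization1} twice, apply $\Omega$, and identify terms via the delooping equivalence $|i\Sdot\D|\simeq\Omega|i\Sdot\Sdot\D|$ from the preceding corollary together with $\mathbf{S}_1\E\simeq\E$ (the paper substitutes terms before applying $\Omega$, you after; these are interchangeable). One small caveat in your naturality remark: the equivalence $|i\Sdot\D|\simeq\Omega|i\Sdot\Sdot\D|$ is not itself induced by a morphism of derivators but arises as a connecting map in the path-space fibration, though the required compatibility does hold and the paper treats it with the same brevity.
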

\begin{proof}
If we rotate the fibration sequence of Equation~\ref{eq:localization} to the left twice (and replace $\mathbf{S}_1\E$ by $\E$), we have a fibration sequence
\begin{equation*}
\Omega|i\Sdot\Sdot\Phi|\to\Omega|i\Sdot\Sdot\D|\to|i\Sdot\E|.
\end{equation*}
By the above corollary, $|i\Sdot\D|\to\Omega|i\Sdot\Sdot\D|$ is a homotopy equivalence. Replacing the middle term and applying $\Omega$ everywhere, the corollary follows.
\end{proof}

The next logical step is to use Theorem~\ref{thm:localization1} to prove a localization theorem in K-theory and answer (positive or negatively) Maltsiniotis' conjecture that Verdier quotients of triangulated derivators get sent to long exact sequences in K-theory. Unfortunately, the necessity of coherent diagrams in derivator K-theory obstructs \cite[Theorem~1.6.4]{Wal85} from proceeding verbatim. In particular, that technique being directly translatable would mean that Waldhausen K-theory agrees with derivator K-theory in general, which has been proven false in \cite{ToeVez04} and \cite{MurRap11}. Nonetheless, future work will address the issue of localization in a novel way which should avoid this obstruction.

As a coda, we can relate derivator K-theory to the K-theory of stable $\infty$-categories as described in \cite{BluGepTab13}.

\begin{prop}
Derivator K-theory is an additive invariant of stable $\infty$-categories. Specifically, there is a functor $K_\D\colon\Cat_\infty^\text{ex}\to\cS_\infty$ from the category of small stable $\infty$-categories and exact functors to the category of spectra that inverts Morita equivalences, preserves filtered colimits, and sends split-exact sequences to fibrations.
\end{prop}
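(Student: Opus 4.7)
The plan is to construct the functor $K_\D\colon\Cat_\infty^\text{ex}\to\cS_\infty$ and verify, in order, functoriality to spectra, the split-exactness property, preservation of filtered colimits, and inversion of Morita equivalences. For a small stable $\infty$-category $\cC$, define the prederivator $\D_\cC\colon\Dirf\op\to\CAT$ by $\D_\cC(K):=\Ho(\Fun(N(K),\cC))$, where $N(K)$ is the nerve. By arguments parallel to Lemma~\ref{lemma:waldhausenderivator} (but exploiting that a stable $\infty$-category is both finitely complete and finitely cocomplete and satisfies the strongness axiom via the canonical mapping space structure), $\D_\cC$ is a strong triangulated derivator on $\Dirf$, and in particular a strong left pointed derivator. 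An exact functor $F\colon\cC\to\cD$ induces a cocontinuous morphism $F_*\colon\D_\cC\to\D_\cD$ by postcomposition, giving a well-defined map on K-theory via Corollary~\ref{cor:strictification}. The corollary establishing that $K(\D)$ is an infinite loop space then upgrades $K(\D_\cC)$ to a connective $\Omega$-spectrum $K_\D(\cC)$, with functoriality inherited from the levelwise delooping.

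For a split-exact sequence $\cA\xrightarrow{i}\cB\xrightarrow{p}\cC$ with retraction $r\colon\cB\to\cA$ of $i$ and section $s\colon\cC\to\cB$ of $p$, the coherent cofiber sequence of endofunctors $i\circ r\to\id_\cB\to s\circ p$ in $\Fun(\cB,\cB)$ assembles, after applying $\Ho(\Fun(N(K),-))$ pointwise in $K\in\Dirf$, to a cofibration morphism of derivators $\Xi\colon\D_\cB\to(\D_\cB)_\text{cof}$. Applying Theorem~\ref{thm:mainadditivity} yields $K(\id_{\D_\cB})\simeq K((ir)_*)\sqcup K((sp)_*)$; using that $i$ and $s$ are fully faithful on homotopy categories to identify $K((ir)_*)\simeq K_\D(\cA)$ and $K((sp)_*)\simeq K_\D(\cC)$, we obtain a splitting of connective spectra, equivalently a fibration $K_\D(\cA)\to K_\D(\cB)\to K_\D(\cC)$.

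Preservation of filtered colimits follows from three commutation facts: for $K\in\Dirf$, $N(K)$ has only finitely many non-degenerate simplices so $\Fun(N(K),-)$ commutes with filtered colimits of stable $\infty$-categories; $\Ho(-)$ commutes with filtered colimits of $\infty$-categories (as each hom-set in the homotopy category is a $\pi_0$ of a mapping space); and the further passages to the full subcategories $\mathbf{S}_n\D_{(-)}$, the nerve, the diagonal, and the geometric realization all commute with filtered colimits. Morita invariance is the subtlest point: a Morita equivalence $\cC\to\cD$ in $\Cat_\infty^\text{ex}$ is a fully faithful exact functor whose image generates $\cD$ under retracts, and one must show that the induced $\D_\cC\to\D_\cD$ is an equivalence on K-theory. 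The cleanest route is to invoke the universal characterization of \cite{BluGepTab13}: any functor to spectra preserving filtered colimits and sending split-exact sequences to fibrations must invert Morita equivalences, because the idempotent completion $\cC\to\widehat{\cC}$ arises as a filtered colimit of splittings that can be absorbed via additivity.

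The principal obstacle is the coherent construction of the cofibration morphism $\Xi\colon\D_\cB\to(\D_\cB)_\text{cof}$ from a split-exact sequence. While at the $\infty$-categorical level the splittings $r,s$ provide canonical coherence data for a cofiber sequence of functors, transporting this through $\Ho$ and assembling a \emph{strict} cocontinuous morphism of derivators (rather than a merely pseudonatural one) requires careful use of Proposition~\ref{prop:strictification} so that the source, target, and quotient identify correctly with $(ir)_*$, $\id_{\D_\cB}$, and $(sp)_*$. A secondary difficulty is Morita invariance, which may require finer techniques beyond invoking the universal property — in particular, a derivator-level cofinality argument accounting for idempotent completion — and is the most likely place where additional machinery would be required to make the proof fully self-contained.
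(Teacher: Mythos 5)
Your construction of the derivator $\D_\cC$ from a stable $\infty$-category agrees with the paper (it is exactly Arlin's $\HO$ applied to the underlying quasicategory), your filtered colimit argument matches the paper step for step, and your use of the additivity theorem for split-exact sequences is in the same spirit as the paper's (with more detail). However, there are two issues, one of which is a genuine gap.

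The gap is in Morita invariance. You assert that ``any functor to spectra preserving filtered colimits and sending split-exact sequences to fibrations must invert Morita equivalences,'' deduced from the universal characterization of~\cite{BluGepTab13}. This implication is false: inverting Morita equivalences is a \emph{separate} hypothesis in the definition of an additive invariant there, not a consequence of the other two. Concretely, the inclusion of a dense non-idempotent-complete stable subcategory $\cC\subset\widehat\cC$ is a Morita equivalence, but $K_0$ can be a proper subgroup on the source (this is exactly the content of Thomason's cofinality theorem, which gives a $\pi_0$ defect). Your proposed ``filtered colimit of splittings'' argument does not repair $K_0$. The paper's fix is direct and simple: \emph{define} $K_\D$ to begin with the idempotent completion functor $\natural\colon\Cat_\infty^\text{ex}\to\Cat_\infty^\text{perf}$, exactly as BGT do for the universal additive invariant; Morita equivalences are then inverted by construction, since they become genuine equivalences after $\natural$. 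You should build this into your definition of the functor from the outset rather than trying to deduce it afterward.

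The second, lesser point concerns strictness. You flag as the ``principal obstacle'' the need to strictify the morphisms $F_\ast\colon\D_\cC\to\D_\cD$ via Proposition~\ref{prop:strictification}, since in general morphisms of prederivators are only pseudonatural. This would work, but the paper avoids it entirely by invoking Arlin's result that $\HO$ is a simplicially fully faithful embedding of quasicategories into the Muro--Raptis simplicial enrichment of prederivators, and that every functor of quasicategories induces a \emph{strict} morphism of prederivators. Consequently there is no hidden strictification step, and one gets an honest (simplicially enriched) functor $\Cat_\infty^\text{perf}\to\Der_K$ on the nose. This is a cleaner route and removes your stated obstacle.

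Finally, a note on your split-exactness argument: the paper disposes of this in one line (``that split exact sequences are sent to fibrations is exactly the additivity theorem''), while you sketch the cofiber sequence $ir\to\id_\cB\to sp$ of endofunctors explicitly. Your more detailed account is reasonable, but verifying that it assembles to a cofibration morphism $\D_\cB\to(\D_\cB)_\text{cof}$ with the advertised source, target, and quotient is a nontrivial coherence check you correctly flag but do not carry out. The paper leaves this implicit as well, so this is not a gap relative to the paper, but be aware that a fully self-contained proof would need that verification.
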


\begin{proof}
First, how does one obtain a derivator from an $\infty$-category? Arlin (n\'e Carlson) in \cite{Arl20} gives the following natural definition [Definition~9] using the quasicategory model for $\infty$-categories. For $Q$ a quasicategory, define the prederivator $\HO(Q)$ by
\begin{equation*}
\HO(Q)\colon J\mapsto \Ho\left(Q^{N_\bullet J}\right),\quad u\colon J\to K\mapsto \Ho(N_\bullet u^\ast)\colon\Ho\left(Q^{N_\bullet K}\right)\to\Ho\left(Q^{N_\bullet J}\right)
\end{equation*}
where $Q^{N_\bullet J}$ is the quasicategory of simpicial maps $N_\bullet J\to Q$. The action of $\HO(Q)$ on natural transformations is the only sensible one given the above. Arlin proves that all such prederivators satisfy Der1, Der2, and Der5. Moreover, if $Q$ admits (homotopy) limits and colimits, so does $\HO(Q)$. In particular, if $Q$ is a stable quasicategory, it admits all homotopy finite limits and colimits, so $\HO(Q)$ is a strong derivator on~$\Dirf$. Moreover, an exact functor of stable $\infty$-categories preserves all finite limits and colimits, so induces a cocontinuous morphism of the corresponding derivators. Therefore $\HO(\Cat_\infty^\text{ex})\subset\Der_K$.

Arlin further proves in \cite[Corollary~20]{Arl20} that quasicategories embed simplicially fully-faithfully into the simplicial enrichment of (pre)derivators developed by Muro-Raptis in \cite{MurRap17}, wherein the authors also prove that the derivator K-theory functor $\Der_K\to\cS$ admits a simplicial enrichment in \cite[Proposition~5.1.3]{MurRap17}. We will not reiterate the details of these simplicial enrichment on derivators here, but it can be noted that it requires working with strict morphisms of derivators only. Since these are the only morphisms that pass honestly to K-theory, this is no problem. Part of Arlin's proof is that all functors of quasicategories give strict morphisms of derivators, so we do not have an invisible `strictification' step in the middle.

As derivator K-theory takes values in infinite loop spaces, \ie connective spectra, we can postcompose the K-theory functor with the ($\infty$-categorical) suspension spectrum functor $\Sigma_+^\infty\colon\cS\to\cS_\infty$ without changing anything. The total definition becomes
\begin{equation*}
\vcenter{\xymatrix{
K_\D\colon\Cat_\infty^\text{ex}\overset{\natural}\longrightarrow\Cat_\infty^\text{perf}\subset\textbf{QCat}\overset{\HO}\longrightarrow\Der_K\overset{K}\longrightarrow\cS\to\cS_\infty
}}
\end{equation*}
We consider all categories above to be simplicial categories (as our model for $\infty$-categories). Thus what we have above is not literally a functor between $\infty$-categories, but induces one once enough (co)fibrant replacement is incorporated.

The functor $\natural$ is the idempotent completion functor, which also appears as the first step of the universal additive invariant in \cite{BluGepTab13}; this ensures that Morita equivalences are inverted. The category $\Cat_\infty^\text{perf}$ is just the full subcategory idempotent complete $\infty$-categories. That split exact sequences are sent to fibrations is exactly the additivity theorem.

Finally, we need to address filtered colimits. First, the idempotent completion functor is  left adjoint to the inclusion $\Cat_\infty^\text{perf}\subset \Cat_\infty^\text{ex}$ by (for example) the comments near \cite[Definition~2.14]{BluGepTab13}, so preserves all colimits. Therefore assume that $Q\colon I\to \Cat_\infty^\text{perf}$ is a filtered diagram. For any $K\in\Dirf$, we can consider the comparison map
\begin{equation*}
\colim_I\HO(Q_i)(K)\to\HO(\colim_I Q_i)(K)
\end{equation*}
which, if we unwind the definition, is
\begin{equation*}
\colim_I \Ho\left(Q_i^{N_\bullet K}\right)\to \Ho\left((\colim_I Q_i)^{N_\bullet K}\right)
\end{equation*}
The functor $\Ho\colon\Cat_\infty\to\Cat$ which gives the underlying category of a quasicategory is left adjoint the nerve functor, so preserves all colimits. Therefore we need only consider the comparison map between quasicategories
\begin{equation*}
\colim_I Q_i^{N_\bullet K}\to (\colim_I Q_i)^{N_\bullet K}
\end{equation*}
Here we may use the fact that $K\in\Dirf$, so that $N_\bullet K$ has only finitely many nondegenerate simplices. In particular, this is a compact object in simplicial sets, so the functor $(-)^{N_\bullet K}$ commutes with filtered colimits.

We therefore obtain an equivalence
\begin{equation*}
\colim_I\HO(Q_i)(K)\to\HO(\colim_I Q_i)(K)
\end{equation*}
for all $K\in\Dirf$, which assemble to an equivalence of the corresponding derivators. This equivalence passes through to K-theory, completing the argument.
\end{proof}

In light of this proposition, we should expect that the comparison map between Waldhausen and derivator K-theory that Muro-Raptis studied in \cite{MurRap17} agrees with the `universal trace map' $K_\infty\to K_\D$ guaranteed by the results of \cite[Theorem~10.3]{BluGepTab13}. In particular, the spectrum of natural transformations $\operatorname{Nat}(K_\infty,K_\D)$ is isomorphic to $K_\D(\cS^\omega_\infty)$, the K-theory associated to the derivator of (compact) spectra, \ie the derivator K-theory of modules over the sphere spectrum $\bS$. 

We know that $\pi_0K_\infty(\bS)\cong\Z$, $\pi_1K_\infty(\bS)\cong \Z/2\Z$, and $\pi_2K_\infty(\bS)\cong\Z/2\Z$, which can be found in \cite{BluMan19}. Using the results of \cite{Rap19} (building on \cite{Mur08}), the comparison map $K_\infty\to K_\D$ is 2-connected. This means that $\pi_0 K_\D(\bS)\cong \Z$, so that the derivator trace map should correspond to an integer $x\in\Z$. In \cite[Theorem~10.6]{BluGepTab13}, the authors prove that the Dennis trace $K\to T\!H\!H$ corresponds to $1\in T\!H\!H_0(\bS)\cong\Z$, but this relies on earlier (classical) computations by Waldhausen. Future work will explore this perspective more closely and identify the integer corresponding to the `derivator trace'. In particular, Raptis' conjecture of whether the derivator trace is generally not an isomorphism on $\pi_2$ might start by showing that $\pi_2 K_\D(\bS)=0$, though we have no evidence for this at present.

\bibliography{master-bibliography}

\begin{thebibliography}{BGT13}

\bibitem[Arl20]{Arl20}
Kevin Arlin.
\newblock A higher {W}hitehead theorem and the embedding of quasicategories in
  prederivators.
\newblock {\em Homology Homotopy Appl.}, 22(1):117--139, 2020.

\bibitem[BGT13]{BluGepTab13}
Andrew~J. Blumberg, David Gepner, and Gon\c{c}alo Tabuada.
\newblock A universal characterization of higher algebraic {$K$}-theory.
\newblock {\em Geom. Topol.}, 17(2):733--838, 2013.

\bibitem[BM19]{BluMan19}
Andrew~J. Blumberg and Michael~A. Mandell.
\newblock The homotopy groups of the algebraic {$K$}-theory of the sphere
  spectrum.
\newblock {\em Geom. Topol.}, 23(1):101--134, 2019.

\bibitem[Bor94]{Bor94a}
Francis Borceux.
\newblock {\em Handbook of categorical algebra. 1}, volume~50 of {\em
  Encyclopedia of Mathematics and its Applications}.
\newblock Cambridge University Press, Cambridge, 1994.
\newblock Basic category theory.

\bibitem[Cis10]{Cis10}
Denis-Charles Cisinski.
\newblock Cat\'egories d\'erivables.
\newblock {\em Bull. Soc. Math. France}, 138(3):317--393, 2010.

\bibitem[CN08]{CisNee08}
Denis-Charles Cisinski and Amnon Neeman.
\newblock Additivity for derivator {$K$}-theory.
\newblock {\em Adv. Math.}, 217(4):1381--1475, 2008.

\bibitem[Col20]{Col20b}
Ian Coley.
\newblock The theory of half derivators.
\newblock In preparation, 2020.

\bibitem[Fra96]{Fra96}
Jens Franke.
\newblock Uniqueness theorems for certain triangulated categories with an
  {A}dams spectral sequence.
\newblock \\http://www.math.uiuc.edu/K-theory/0139/Adams.pdf, 1996.

\bibitem[Gar05]{Gar05}
Grigory Garkusha.
\newblock Systems of diagram categories and {K}-theory. {II}.
\newblock {\em Math. Z.}, 249(3):641--682, 2005.

\bibitem[Gar06]{Gar06}
G.~Garkusha.
\newblock Systems of diagram categories and {$K$}-theory. {I}.
\newblock {\em Algebra i Analiz}, 18(6):131--186, 2006.
\newblock https://arxiv.org/abs/math/0401062.

\bibitem[GPS14]{GroPonShu14b}
Moritz Groth, Kate Ponto, and Michael Shulman.
\newblock Mayer-vietoris sequences in stable derivators.
\newblock {\em Homology Homotopy Appl.}, 16(1):265--294, 2014.

\bibitem[Gra89]{Gra89}
Daniel~R. Grayson.
\newblock Exterior power operations on higher {$K$}-theory.
\newblock {\em $K$-Theory}, 3(3):247--260, 1989.

\bibitem[Gra11]{Gra11}
Daniel~R. Grayson.
\newblock The additivity theorem in algebraic {$K$}-theory.
\newblock {\em Doc. Math.}, 16:457--464, 2011.

\bibitem[Gro90]{Gro90}
Alexandre Grothendieck.
\newblock Les d\'erivateurs.
\newblock
  \\https://webusers.imj-prg.fr/$\sim$georges.maltsiniotis/groth/Derivateurs.html,
  1990.

\bibitem[Gro13]{Gro13}
Moritz Groth.
\newblock Derivators, pointed derivators and stable derivators.
\newblock {\em Algebr. Geom. Topol.}, 13(1):313--374, 2013.

\bibitem[Hel88]{Hel88}
Alex Heller.
\newblock Homotopy theories.
\newblock {\em Mem. Amer. Math. Soc.}, 71(383):vi+78, 1988.

\bibitem[LN17]{Lag17}
Ioannis Lagkas-Nikolos.
\newblock Levelwise modules over separable monads on stable derivators.
\newblock {\em Journal of Pure and Applied Algebra}, 2017.

\bibitem[Mal07]{Mal07}
Georges Maltsiniotis.
\newblock La {$K$}-th\'eorie d'un d\'erivateur triangul\'e.
\newblock In {\em Categories in algebra, geometry and mathematical physics},
  volume 431 of {\em Contemp. Math.}, pages 341--368. Amer. Math. Soc.,
  Providence, RI, 2007.

\bibitem[MR11]{MurRap11}
Fernando Muro and George Raptis.
\newblock A note on {$K$}-theory and triangulated derivators.
\newblock {\em Adv. Math.}, 227(5):1827--1845, 2011.

\bibitem[MR17]{MurRap17}
Fernando Muro and George Raptis.
\newblock {$K$}-theory of derivators revisited.
\newblock {\em Ann. K-Theory}, 2(2):303--340, 2017.

\bibitem[Mur08]{Mur08}
Fernando Muro.
\newblock Maltsiniotis's first conjecture for $k_1$.
\newblock {\em International Mathematics Research Notices}, 2008:rnm153, 2008.

\bibitem[Nee97a]{Nee97a}
Amnon Neeman.
\newblock {$K$}-theory for triangulated categories. {I}({A}). {H}omological
  functors.
\newblock {\em Asian J. Math.}, 1(2):330--417, 1997.

\bibitem[Nee97b]{Nee97b}
Amnon Neeman.
\newblock {$K$}-theory for triangulated categories. {I}({B}). {H}omological
  functors.
\newblock {\em Asian J. Math.}, 1(3):435--529, 1997.

\bibitem[Qui67]{Qui67}
Daniel~G. Quillen.
\newblock {\em Homotopical algebra}.
\newblock Lecture Notes in Mathematics, No. 43. Springer-Verlag, Berlin-New
  York, 1967.

\bibitem[Qui73]{Qui73}
Daniel Quillen.
\newblock Higher algebraic {$K$}-theory. {I}.
\newblock In {\em Algebraic {$K$}-theory, {I}: {H}igher {$K$}-theories ({P}roc.
  {C}onf., {B}attelle {M}emorial {I}nst., {S}eattle, {W}ash., 1972)}, pages
  85--147. Lecture Notes in Math., Vol. 341. Springer, Berlin, 1973.

\bibitem[Rap19]{Rap19}
George Raptis.
\newblock Higher homotopy categories, higher derivators, and {K}-theory.
\newblock arXiv:1910.04117, 2019.

\bibitem[Sch02]{Sch02}
Marco Schlichting.
\newblock A note on {$K$}-theory and triangulated categories.
\newblock {\em Invent. Math.}, 150(1):111--116, 2002.

\bibitem[TV04]{ToeVez04}
Bertrand To{\"e}n and Gabriele Vezzosi.
\newblock A remark on {$K$}-theory and {$S$}-categories.
\newblock {\em Topology}, 43(4):765--791, 2004.

\bibitem[Ver96]{Ver96}
Jean-Louis Verdier.
\newblock Des cat\'egories d\'eriv\'ees des cat\'egories ab\'eliennes.
\newblock {\em Ast\'erisque}, (239):xii+253 pp. (1997), 1996.
\newblock With a preface by Luc Illusie, Edited and with a note by Georges
  Maltsiniotis.

\bibitem[Wal78]{Wal78}
Friedhelm Waldhausen.
\newblock Algebraic {$K$}-theory of generalized free products. {I}, {II}.
\newblock {\em Ann. of Math. (2)}, 108(1):135--204, 1978.

\bibitem[Wal85]{Wal85}
Friedhelm Waldhausen.
\newblock Algebraic {$K$}-theory of spaces.
\newblock In {\em Algebraic and geometric topology ({N}ew {B}runswick,
  {N}.{J}., 1983)}, volume 1126 of {\em Lecture Notes in Math.}, pages
  318--419. Springer, Berlin, 1985.

\bibitem[Wei13]{Wei13}
Charles~A. Weibel.
\newblock {\em The {$K$}-book}, volume 145 of {\em Graduate Studies in
  Mathematics}.
\newblock American Mathematical Society, Providence, RI, 2013.
\newblock An introduction to algebraic $K$-theory.

\end{thebibliography}
\bibliographystyle{alpha}

\end{document}